\def\@logofont{\footnotesize}
\def\@setaddresses{\par
  \nobreak \begingroup
  \footnotesize
  \def\author##1{\nobreak\addvspace\bigskipamount}%
  \def\\{\par\nobreak}%
  \interlinepenalty\@M
  \def\address##1##2{\begingroup
    \par\addvspace\bigskipamount\indent
    \@ifnotempty{##1}{(\ignorespaces##1\unskip) }%
    {\scshape\ignorespaces##2}\par\endgroup}%
  \def\curraddr##1##2{\begingroup
    \@ifnotempty{##2}{\nobreak\indent\curraddrname
      \@ifnotempty{##1}{, \ignorespaces##1\unskip}\/:\space
      ##2\par}\endgroup}%
  \def\email##1##2{\begingroup
    \@ifnotempty{##2}{\nobreak\indent\emailaddrname
      \@ifnotempty{##1}{, \ignorespaces##1\unskip}\/:\space
      \ttfamily##2\par}\endgroup}%
  \def\urladdr##1##2{\begingroup
    \def~{\char`\~}%
    \@ifnotempty{##2}{\nobreak\indent\urladdrname
      \@ifnotempty{##1}{, \ignorespaces##1\unskip}\/:\space
      \ttfamily##2\par}\endgroup}%
  \addresses
  \endgroup
}
\renewcommand*\subjclass[2][2010]{%
  \def\@subjclass{#2}%
  \@ifundefined{subjclassname@#1}{%
    \ClassWarning{\@classname}{Unknown edition (#1) of Mathematics
      Subject Classification; using '2000'.}%
  }{%
    \@xp\let\@xp\subjclassname\csname subjclassname@#1\endcsname
  }%
}
\newtheorem{theorem}{Theorem}[section]
\newtheorem*{theorem*}{Theorem}
\newtheorem{proposition}[theorem]{Proposition}
\newtheorem{lemma}[theorem]{Lemma}
\newtheorem{question}[theorem]{Question}
\newtheorem{corollary}[theorem]{Corollary}
\newtheorem{claim}[theorem]{Claim}
\theoremstyle{definition}
\newtheorem{definition}[theorem]{Definition}
\theoremstyle{remark}
\newtheorem{remark}[theorem]{Remark}
\newtheorem{example}[theorem]{Example}
\newcommand{\F}{\mathcal{F}}
\newcommand{\M}{\mathcal{M}}
\newcommand{\I}{\mathcal{I}}
\newcommand{\N}{\mathcal{N}}
\begin{document}
\title{Matchings in matroids over abelian groups}
\thanks{Dedicated to Professor Shmuel Friedland on his 80th birthday.}
\thanks{\textbf{Keywords and phrases}. matchable bases, sparse paving matroids, transversal matroids.}
\thanks{\textbf{2020 Mathematics Subject Classification}. Primary: 05B35 ; Secondary: 05D15, 05E16.}

\author[M. Aliabadi]{Mohsen Aliabadi$^{1,*}$ \and Shira Zerbib$^2$}
\thanks{$^1$Department of Mathematics, University of California, San Diego, 
9500 Gilman Dr, La Jolla, CA 92093, USA.  \url{maliabadisr@ucsd.edu}.\\
$^2$Department of Mathematics, Iowa State University,
411 Morrill Road
Ames, IA 50011, USA. \url{zerbib@iastate.edu}. Shira Zerbib was supported by NSF grant DMS-1953929.}
\thanks{$^*$ Corresponding Author.}


\begin{abstract}
 We formulate and prove matroid analogues of results concerning matchings in groups. A matching in an abelian group $(G,+)$ is a bijection $f:A\to B$ between two finite subsets $A,B$ of  $G$ satisfying $a+f(a)\notin A$ for all $a\in A$. A group $G$ has the matching property if for every two finite subsets  $A,B \subset G$  of the same size with $0 \notin B$, there exists a matching from $A$ to $B$. In  \cite{Losonczy} it was proved that an abelian group has the matching property if and only if it is torsion-free or cyclic of prime order. Here we consider a similar question in a matroid setting. We
introduce an analogous notion of matching between matroids whose ground sets are subsets of an abelian group $G$, and we obtain criteria for the existence of such matchings.  Our tools are classical theorems in matroid theory, group theory and additive number theory. 
\end{abstract}

\maketitle

\section{Introduction}

\subsection{Matchings in groups.}

The notion of matching in groups was  first introduced in \cite{Fan} by Fan and Losonczy, who used matchings in $\mathbb{Z}^n$ to study an old problem of Wakeford \cite{Wakeford} concerning canonical forms for symmetric tensors.

Let $(G,+)$ be an abelian  group with neutral element $0$.  
Let $A,B$ be finite subsets of the same cardinality of 
$G$, so that $0\notin B$. A {\it matching} from $A$ to $B$ is  a bijection $f:A\to B$ such that  $a+f(a)\not\in A$ for every $a\in A$. Clearly, the conditions  $|A|=|B|$ and $0 \notin B$ are necessary for the existence of such a bijection. A matching is called {\it symmetric} if $A=B$, and otherwise it is {\it asymmetric}.
If there exists a matching from $A$ to $B$, then $A$ is said to be {\it matched} or {\it matchable} to $B$. 
The  group $G$ is said to satisfy the  {\it matching property} if for every two finite subsets $A,B\subset G$ such that $|A|=|B|$ and $0 \notin B$, $A$ is matched to $B$.

A characterization of abelian groups satisfying the matching property, as well as a necessary and sufficient condition  for the existence of symmetric matchings,  were obtained by Losonczy in \cite{Losonczy}:

 \begin{theorem}\cite{Losonczy}\label{symmetric matching}
Let $G$ be an abelian group and let $A$ be a nonempty finite subset of $G$.
Then there is a matching from $A$ to itself if and only if $0 \notin A$.
\end{theorem}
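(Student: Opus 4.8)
The plan is to establish the necessity of $0\notin A$ by a one-line bijection argument, and to prove sufficiency by reducing the existence of a symmetric matching to Hall's condition on an auxiliary bipartite graph, which I would then verify using Kneser's theorem from additive combinatorics. For necessity, suppose a symmetric matching $f\colon A\to A$ exists and, for contradiction, that $0\in A$. Since $f$ is a bijection of $A$, there is some $a_0\in A$ with $f(a_0)=0$, and then $a_0+f(a_0)=a_0\in A$, contradicting the defining condition of a matching. Hence $0\notin A$; this is the easy half.

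For sufficiency, assume $0\notin A$ and form the bipartite graph $\Gamma$ on two disjoint copies of $A$, joining a left vertex $a$ to a right vertex $c$ precisely when $a+c\notin A$. A symmetric matching of $A$ is exactly a perfect matching of $\Gamma$, so by Hall's theorem it suffices to verify Hall's condition $|N_\Gamma(S)|\ge |S|$ for every nonempty $S\subseteq A$. The right vertices that are \emph{non}-neighbours of $S$ are exactly the elements of $T:=\{c\in A : S+c\subseteq A\}$, so $|N_\Gamma(S)|=|A|-|T|$ and Hall's condition rewrites as $|S|+|T|\le |A|$. By the very definition of $T$ we have $S+T\subseteq A$, and of course $T\subseteq A$. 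Thus the entire problem reduces to the following purely additive assertion: \emph{if $S,T$ are finite nonempty subsets of $A$ with $S+T\subseteq A$ and $0\notin A$, then $|S|+|T|\le |A|$.}

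To prove this I would pass to the augmented sets $S_0=S\cup\{0\}$ and $T_0=T\cup\{0\}$, so that $S_0+T_0=(S+T)\cup S\cup T\cup\{0\}$. Since $S\cup T\cup(S+T)\subseteq A$ and, because $0\notin A$, none of $S$, $T$, $S+T$ contains $0$, the element $0$ is the unique point of $S_0+T_0$ lying outside $A$; consequently $|A|\ge |S_0+T_0|-1$. Kneser's theorem gives $|S_0+T_0|\ge |S_0|+|T_0|-|H|$, where $H$ is the stabiliser of $S_0+T_0$, and combining the two estimates yields $|A|\ge |S|+|T|+1-|H|$. When $H$ is trivial this already gives $|A|\ge|S|+|T|$, and the proof is complete.

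The remaining, and in my view the hardest, step is to handle a possibly large stabiliser $H$: one must upgrade the generic Kneser bound to the sharper inequality $|S_0+T_0|\ge |S_0|+|T_0|-1$, i.e. to show that the hypothesis $0\notin S+T$ forces the Kneser deficiency of the pair $(S_0,T_0)$ to be at most $1$. This is exactly where torsion in $G$ makes things delicate, since sumsets can be much smaller than $|S_0|+|T_0|-1$ in general (for instance when $S_0,T_0$ are cosets of a subgroup). The key leverage is that $0\in S_0\cap T_0$ forces the whole coset $H$ to lie in $S_0+T_0$, while $0$ admits the unique representation $0=0+0$ there; tracking this coset and its representations, a nontrivial stabiliser producing deficiency at least $2$ would have to yield a nonzero relation $s=-t$ with $s\in S,\,t\in T$, that is $0\in S+T$, contradicting $0\notin A$. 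Carrying out this coset bookkeeping carefully is the main obstacle, and completing it closes the verification of Hall's condition and hence the theorem.
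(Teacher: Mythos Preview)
The paper does not give its own proof of this theorem; it is quoted from Losonczy and used as a black box. So there is no line-by-line comparison to make. That said, your reduction is the standard one and is correct: Hall's condition on the bipartite graph $\Gamma$ is equivalent to the additive statement that whenever $S,T\subseteq A$ are nonempty with $S+T\subseteq A$ and $0\notin A$, one has $|S|+|T|\le |A|$. This is precisely Lemma~\ref{Kemperman's consequences} of the present paper (with $X=A$), so your plan meshes perfectly with the toolkit the paper already assembles.

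Where your proposal goes astray is in the last paragraph. You already noted the decisive fact: in $S_0+T_0$ the element $0$ has the \emph{unique} representation $0=0+0$, because any other representation $0=s+t$ with $s\in S$, $t\in T$ would put $0\in S+T\subseteq A$. That single observation is exactly the hypothesis of Kemperman's theorem (Lemma~\ref{Kemperman's theorem} here), which immediately gives $|S_0+T_0|\ge |S_0|+|T_0|-1$ with no stabiliser analysis whatsoever; combined with $S_0+T_0\subseteq A\cup\{0\}$ you get $|A|\ge |S|+|T|$ and you are done. This is how the paper proves Lemma~\ref{Kemperman's consequences}. Your attempt to instead push Kneser through a nontrivial stabiliser $H$ is both unnecessary and, as written, not a proof: the assertion that ``deficiency at least $2$ would have to yield a nonzero relation $s=-t$'' does not follow from the coset containment $H\subseteq S_0+T_0$ alone, and making it rigorous amounts to reproving Kemperman's theorem. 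Replace that paragraph by a one-line appeal to Kemperman and the argument is complete.
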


 \begin{theorem}\cite{Losonczy}\label{matching property}
An abelian group $G$ satisfies the matching property if and
only if $G$ is torsion-free or cyclic of prime order.
\end{theorem}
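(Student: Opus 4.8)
The plan is to prove both implications of the equivalence, treating necessity (matching property $\Rightarrow$ torsion-free or $\mathbb{Z}_p$) as the shorter direction and sufficiency as the main work. Throughout I would use one reformulation: given $A,B$ with $|A|=|B|=n$ and $0\notin B$, form the bipartite graph $\Gamma$ on parts $A$ and $B$ with $a\sim b$ exactly when $a+b\notin A$. A matching from $A$ to $B$ is then precisely a perfect matching of $\Gamma$, so by Hall's marriage theorem one exists if and only if $|N_\Gamma(S)|\ge|S|$ for every $S\subseteq A$.

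For necessity I would argue by contraposition and by restriction to a subgroup. First note that if $H\le G$ and $A,B\subseteq H$, then the condition $a+f(a)\notin A$ means the same thing whether read in $H$ or in $G$; hence if some finite $H\le G$ fails the matching property, so does $G$. A short group-theoretic step then shows that if $G$ has torsion and $G\not\cong\mathbb{Z}_p$, it contains a proper nontrivial finite subgroup $H$: a nonzero torsion element $g$ generates a finite cyclic group $\langle g\rangle$, which is proper unless $G=\langle g\rangle$, and a finite cyclic group with no proper nontrivial subgroup has prime order. Inside such an $H$ I would exhibit a non-matchable pair by taking $A=H$ and $B=(H\setminus\{0\})\cup\{x\}$ with $x\in G\setminus H$, so that $|A|=|B|$ and $0\notin B$. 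Any bijection $f:A\to B$ must send all but one element of $A$ into $H\setminus\{0\}$, and for each such $a$ one has $a+f(a)\in H=A$, so no matching can exist.

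For sufficiency I would verify Hall's condition for $\Gamma$. Fix $S\subseteq A$ and let $T=B\setminus N_\Gamma(S)$ be the common non-neighbours; by definition $s+t\in A$ for all $s\in S,\,t\in T$, i.e. $S+T\subseteq A$, whence $|S+T|\le n$. The engine is a sumset lower bound. In the torsion-free case I would fix a linear order on $G$ compatible with addition (available since $G$ embeds in an orderable $\mathbb{Q}$-vector space) and use $|X+Y|\ge|X|+|Y|-1$; in the $\mathbb{Z}_p$ case I would invoke the Cauchy--Davenport inequality $|X+Y|\ge\min(p,|X|+|Y|-1)$, noting that $0\notin B$ forces $n=|B|\le p-1<p$, so the minimum is attained by the second term. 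Now suppose toward a contradiction that Hall fails at $S$, so $|T|=n-|N_\Gamma(S)|\ge n-|S|+1$, i.e. $|S|+|T|\ge n+1$ with $S,T$ nonempty.

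The delicate point, which I expect to be the main obstacle, is an apparent off-by-one: the sumset bound only yields $|S+T|\ge|S|+|T|-1\ge n$, which merely meets the trivial upper bound $|S+T|\le n$ and forces the equality $S+T=A$ rather than an outright contradiction. The hypothesis $0\notin B$ is exactly what closes this gap. Since $0\notin T$, the set $T\cup\{0\}$ is strictly larger than $T$, yet $S+(T\cup\{0\})=(S+T)\cup S=A\cup S=A$ because $S\subseteq A=S+T$; hence $|S+(T\cup\{0\})|=n$. On the other hand, applying the sumset bound to $S$ and $T\cup\{0\}$ gives $|S+(T\cup\{0\})|\ge|S|+|T|\ge n+1$ (using $n+1\le p$ in the $\mathbb{Z}_p$ case), a contradiction. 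Therefore Hall's condition holds for every $S$ (the cases $S=\emptyset$ and $T=\emptyset$ being immediate), a perfect matching of $\Gamma$ exists, and $G$ satisfies the matching property.
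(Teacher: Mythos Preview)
Your proof is correct. The paper does not itself prove this theorem --- it is quoted from \cite{Losonczy} as a known result --- but the end of Section~\ref{sec:ourmainresults} explains that Losonczy's argument combines Hall's marriage theorem with Kneser's additive theorem, which is precisely your strategy: your bound $|X+Y|\ge|X|+|Y|-1$ (obtained via a compatible order in the torsion-free case and via Cauchy--Davenport in $\mathbb{Z}_p$) is exactly the specialization of Kneser's theorem to groups with no proper nontrivial finite subgroup, and your device of adjoining $0$ to $T$ to squeeze out the extra unit is the content of Lemma~\ref{Kemperman's consequences}. Your necessity construction, with $A=H$ and $B=(H\setminus\{0\})\cup\{x\}$ for a proper nontrivial finite subgroup $H$, also coincides with the construction the paper uses to prove the matroid analogue, Proposition~\ref{prop:onlyif}(2). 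One cosmetic remark: your opening observation that the matching property is inherited by subgroups is never actually invoked, since the counterexample you build has $B\not\subseteq H$; you may simply drop that sentence.
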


These results were extended to arbitrary groups in \cite{Eliahou 1} and to linear subspaces of a field extension in \cite{Eliahou 2}. The linear version
was further studied  in \cite{Aliabadi 0, Aliabadi 1, Aliabadi 3}. See also \cite{Hamidoune} for enumerative aspect of matchings and \cite{Alon} for the study a certain subclass of matchings called {\em acyclic matchings}.

In this paper we introduce an analogous notion of matchings in matroids whose ground sets are subsets of an abelian group. We develop similar criteria for the existence of matching properties for such matroids. 


\subsection{Matching in matroids}\label{Matching in matroids}
Matroids were first defined independently in the 1930s by Nakasawa \cite{Nakasawa} and Whitney
\cite{Whitney}, to simultaneously
capture notions of independence in linear algebra and graph theory. 
The theory of matroids has deep connections with many other areas, including field theory, matching
theory, submodular optimization, Lie combinatorics, and total positivity (see recent survey in \cite{Ardila}).

  We choose one of many equivalent definitions.
A {\it matroid} $M$ is a pair $(E, \mathcal{I})$ where $E=E(M)$ is a finite  {\em ground set}  and $\mathcal{I}$ is a family of subsets of $E$, called {\em independent sets},  satisfying the following  conditions:
\begin{itemize}
\item $\emptyset \in \I$.
\item If $X\in \I$ and $Y\subset X$ then $Y\in \I$.
\item The {\em augmentation property}: If $X, Y \in \I$  and $|X|>|Y|$ then there exists $x\in X$ so that $Y\cup \{x\} \in \I$. 
\end{itemize}

Let $M=(E, \mathcal{I})$ be a matroid. 
The {{\em rank}} of a subset $X\subset E$ is given by
$$r_M(X)=
r(X)=max\{|X\cap I|: I\in \mathcal{I}\}.
$$

The rank of a matroid $M$, denoted by $r(M)$, is defined to be $r_M(E(M))$.
A set $X\subset E$ is called {{\em dependent}} if it is not independent.
A maximal independent set is called a {{\em basis}}. It follows from the augmentation property that every two bases have the same size.
Given a matroid $M=(E,\I)$, the {{\em dual matroid}} $M^*=(E, \mathcal{I}^*)$ is defined so that the
bases in $M^*$ are exactly the complements of the bases in $M$.

A matroid $M=(E,\I)$ of rank $n$ is said to be  a {{\em paving matroid}} if every $(n-1)$-subset of $E$ is independent.
If $M$ and $M^*$ are both paving matroids then $M$ is called a {\em sparse paving matroid}.\\

Throughout the paper  $(G,+)$ is assumed to be an abelian additive group with neutral element $0$, and let $p(G)$ denote the smallest cardinality of a non-zero subgroup of $G$. We may assume that all matroids are loopless (see Remark \ref{loop remark}.)
We say that $M=(E,\mathcal{I})$ is a matroid over $G$ if $E$ is a subset of $G$.  We now introduce the definition of {\em matchings in matroids over an abelian group}. Let $S_n$ denote the permutation group on $n$ elements. 

\begin{definition}\label{def, matching matroid}  Let  $(G,+)$ be an abelian group.
\begin{enumerate}
\item  Let $M$ and $N$ be two matroids over $G$ with $r(M)=r(N)=n>0$. Let $\mathcal{M}=\{a_1,\ldots,a_n\}$ and $\mathcal{N}=\{b_1,\ldots,b_n\}$ be ordered bases of $M$ and $N$, respectively. We say $\mathcal{M}$ is {{\em matched}} to $\mathcal{N}$ if $a_i+b_i\notin E(M)$, for all $1\leq i\leq n$. 
    
    \item We say that $M$ is {{\em matched}}  to $N$ if for every basis $\mathcal{M}$ of $M$ there exists a basis $\mathcal{N}$ of $N$, so that $\mathcal{M}$ is matched to $\mathcal{N}$. 
    \item The group $G$ has the {\em matroid matching property}  if for every two matroids $M$ and $N$ over $G$ with $r(M)=r(N)=n>0$ and $0\notin E(N)$, $M$ is matched to $N$.

 \end{enumerate}
\end{definition}

\begin{remark}
Note that 
if $\mathcal{M}$ is matched $\mathcal{N}$, then
$a_i+b_i\notin E(M),$
and in particular, $a_i+b_i\notin \mathcal{M}$, for all $1\leq i \leq n$. It follows that the map $a_i\mapsto b_i$ is a matching in the group sense between the  subsets $\mathcal{M}$ and $\mathcal{N}$ of $G$. In this sense, our definition of matchings in matroids is compatible with the definition of matchings in groups. 

Furthermore, our definition is compatible with the definition of matchability in vector spaces over field extensions, as defined by Eliahou and Lecouvey in \cite{Eliahou 2}.  Let $K\subset F$ be fields, $A,B\subset F$  $n$-dimensional $K$-subspaces of $F$, and $\mathcal{A}=\{a_1,\ldots,a_n\}$, $\mathcal{B}=\{b_1,\ldots,b_n\}$  ordered bases of $A,B$ (as linear vector spaces), respectively. 
Then $\mathcal{A}$ is {\it matched} to $\mathcal{B}$ if 
\begin{equation}\label{eqrem}
    a^{-1}_iA\cap B\subset \langle b_1,\ldots,\hat{b}_i,\ldots,b_n\rangle,
\end{equation}
for every $1\leq i\leq n$, where $\langle b_1,\ldots,\hat{b}_i,\ldots,b_n\rangle$ is the vector space spanned  by  $\mathcal{B}\setminus\{b_i\}$. The vector space  $A$ is {\it matched}  to the vector space $B$ if every basis of $A$ is matched to a basis of $B$.

Let $G$ be the multiplicative group  $F\setminus \{0\}$, and define the matroids over $G$ $M=(A\setminus \{0\},\mathcal{I})$, $N=(B\setminus \{0\},\mathcal{I}')$, where $\mathcal{I},\I'$ are the collections of linearly independent subsets of $A,B$, respectively. Then 
\[r(M)=\dim_K(A)=n=\dim_K(B)=r(N).\] Suppose $\mathcal{A}$  is matched to $\mathcal{B}$ as linear vector spaces. Then it follows from (\ref{eqrem}) that $a_ib_i\notin A$, and therefore   $\mathcal{A}$ is matched to  $\mathcal{B}$ also in the matroid sense. 
\end{remark}

\medskip

The purpose of this paper is to investigate matroid analogs of Theorems \ref{symmetric matching} and \ref{matching property}.
 We study the following questions: 

\begin{question}\label{matroid automatch}
Let $M=(E,\mathcal{I}$) be a matroid over $G$. Is it true that $M$ is matched to itself if and only if $0 \notin E$?
\end{question}

\begin{question}\label{matroid match prop} 
Is it true that $G$ has the matroid matching property if and only if it is  torsion-free or  cyclic of prime order?
\end{question}

We will show that the ``only if" part is true in both questions. 

\begin{proposition}\label{prop:onlyif}
 We have the following:
\begin{enumerate}
   \item Let $M$ be a matroid over $G$.  If  $M$ is matched to itself then  $0\notin E(M)$.
\item If $G$ satisfies the matroid matching property then $G$ is either torsion-free or cyclic of prime order.
\end{enumerate}
\end{proposition}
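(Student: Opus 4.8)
The plan is to prove both parts by contraposition, reducing each to a statement about matchings in the group $G$ itself. For part (1), I would show that if $0 \in E(M)$ then some basis of $M$ fails to be matched to any basis, so $M$ cannot be matched to itself. Since all matroids here are loopless, the singleton $\{0\}$ is independent and can be extended, via the augmentation property, to a basis $\mathcal{M}=\{a_1,\ldots,a_n\}$; relabel so that $a_1=0$. If $\mathcal{M}$ were matched to a basis $\mathcal{N}=\{b_1,\ldots,b_n\}$ through some $\pi\in S_n$, then the matching condition at $i=1$ would read $a_1+b_{\pi(1)}=b_{\pi(1)}\notin E(M)$. But $b_{\pi(1)}$ is an element of the basis $\mathcal{N}\subseteq E(M)$, a contradiction. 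Hence this particular $\mathcal{M}$ is matched to no basis of $M$, so $M$ is not matched to itself.

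For part (2), I would again argue contrapositively: assuming $G$ is neither torsion-free nor cyclic of prime order, I would exhibit two matroids witnessing the failure of the matroid matching property. By Theorem \ref{matching property}, such a $G$ does not satisfy the group matching property, so there exist finite subsets $A,B\subset G$ with $|A|=|B|=n>0$, $0\notin B$, and no group matching from $A$ to $B$. The key construction is to take $M$ and $N$ to be the \emph{free} matroids on $A$ and $B$ respectively (every subset independent). Each then has a unique basis, namely $A$ for $M$ and $B$ for $N$, and the hypotheses of Definition \ref{def, matching matroid}(3) are all met: $r(M)=r(N)=n>0$, $|E(M)|=|E(N)|$, and $0\notin E(N)=B$.

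The crux is the observation that, because the ground set of each free matroid coincides with its unique basis, the matroid matching condition collapses to the group matching condition. Indeed, $M$ being matched to $N$ would mean the basis $A$ is matched to the basis $B$ through some $\pi\in S_n$, i.e. $a_i+b_{\pi(i)}\notin E(M)=A$ for all $i$, which is precisely a group matching $a_i\mapsto b_{\pi(i)}$ from $A$ to $B$. Since no such matching exists, $M$ is not matched to $N$, and therefore $G$ fails the matroid matching property.

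I do not expect either step to present a serious obstacle; the real work is in selecting the right witnesses so that the matroid-level statement reduces cleanly to the already-established group-level facts of Theorems \ref{symmetric matching} and \ref{matching property}. The only points requiring care are verifying looplessness in part (1), so that $0$ genuinely lies in a basis, and confirming in part (2) that the free-matroid witnesses satisfy every hypothesis of the matroid matching property, in particular the cardinality bound $|E(M)|\leq|E(N)|$, which here holds with equality.
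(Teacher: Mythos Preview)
Your proof of part (1) is correct and essentially identical to the paper's.

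For part (2), your argument is correct but proceeds differently from the paper. You invoke Theorem \ref{matching property} to obtain abstract witnesses $A,B$ with no group matching, and then promote them to free matroids; since $E(M)=A$ equals the unique basis of $M$, the matroid matching condition $a_i+b_{\pi(i)}\notin E(M)$ collapses exactly to the group matching condition, and the reduction goes through cleanly. The paper instead builds explicit witnesses by hand: it picks an element $a$ of finite order $n$ with $1<n<|G|$, sets $E(M)=H=\langle a\rangle$ and $E(N)=(H\setminus\{0\})\cup\{x\}$ for some $x\notin H$, and then argues directly that the unique bases cannot be matched (since $0$ must map to $x$, after which any image of $a$ lands back in $H=E(M)$). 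Your route is shorter and more conceptual, piggybacking on the group-theoretic result already established; the paper's route is more self-contained and exhibits the obstruction concretely. Both are valid, and your verification of the side conditions ($r(M)=r(N)=n>0$, $|E(M)|\le|E(N)|$, $0\notin E(N)$) is exactly what is needed to make the free-matroid reduction airtight.
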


The ``if" part is not true  in general, as is shown in Section \ref{sec:examples}. However, if one restricts the discussion to some special (large) classes of  matroids, then the answer becomes positive in both cases. Our main results give a partial characterization of the cases at which the answer is positive.

 The paper is organized as follows. 
In Section \ref{sec:ourmainresults} we state our main results. 
In Section \ref{sec:prelim} we state some known results and prove a few lemmas in matroid theory and group theory that will be used in our proofs. 
In Section \ref{sec:examples} we give two simple  examples showing that the ``if" direction in Questions \ref{matroid automatch} and \ref{matroid match prop} is not true in general.
 Sections \ref{Symmetric} and \ref{Asymmetric} contain the proofs of our main results concerning 
symmetric and asymmetric matchings, respectively. Finally, in Section 6 we give a concluding remark.

\section{Main results}\label{sec:ourmainresults}
We next state our main theorems. Needed definitions from matroid theory  appear in Section \ref{sec:prelim}.

A matroid $M=(E,\I)$ of rank $n$ is said to be a {{\em paving matroid}} if all its circuits are of size at least $n$, that is, if every $(n-1)$-subset of $E$ is independent.
If $M$ and $M^*$ are both paving matroids then $M$ is called a {\em sparse paving matroid}. 
We start by showing that the answer to Question \ref{matroid automatch} is positive whenever $M$ is a sparse paving matroid.

\begin{theorem}\label{sparse sym main}
 Let $M$ be a sparse paving matroid over $G$. If
$0\notin E(M)$ then
$M$ is matched to itself.
\end{theorem}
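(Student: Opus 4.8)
The plan is to reduce matchability of a single basis to a transversal problem and solve it with Rado's theorem, using Losonczy's symmetric matching theorem (Theorem~\ref{symmetric matching}) as the group-theoretic input and the sparse paving structure as the matroid input. Fix a basis $\mathcal{M}=\{a_1,\dots,a_n\}$ of $M$; since $0\notin E(M)$ we have $0\notin\mathcal{M}$. For each $a\in\mathcal{M}$ put $A_a=\{e\in E(M): a+e\notin E(M)\}$. The key observation is that a basis $\mathcal{N}$ to which $\mathcal{M}$ is matched through some $\pi\in S_n$ is exactly a system of distinct representatives $b_a\in A_a$ ($a\in\mathcal{M}$) that is independent in $M$; because $|\mathcal{M}|=n=r(M)$, any independent transversal automatically has size $n$ and is therefore a basis. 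So it suffices to produce an independent transversal of the family $(A_a)_{a\in\mathcal{M}}$, and by Rado's theorem this exists if and only if $r_M\bigl(\bigcup_{a\in J}A_a\bigr)\ge |J|$ for every $J\subseteq\mathcal{M}$.

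First I would record that each $A_a\neq\emptyset$: if $a+E(M)=E(M)$ then, $E(M)$ being finite, every positive multiple of $a$ lies in $E(M)$, forcing $0\in E(M)$ when $a$ has finite order and an infinite $E(M)$ otherwise, both impossible. To get the Hall-type half of Rado's condition I would feed in Theorem~\ref{symmetric matching}, applied to the set $E(M)$ (legitimate since $0\notin E(M)$): it produces a self-matching $g:E(M)\to E(M)$ with $x+g(x)\notin E(M)$ for all $x$. Then $g(a)\in A_a$, and $g$ is injective, so $\{g(a):a\in\mathcal{M}\}$ is already a system of distinct representatives of $(A_a)$; in particular $\bigl|\bigcup_{a\in J}A_a\bigr|\ge|J|$ for every $J$.

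It remains to upgrade ``SDR exists'' to ``independent SDR exists'', and this is where sparse paving enters. In a sparse paving matroid of rank $n$ every subset of size at least $n+1$ spans, every $(n-1)$-subset is independent, and an $n$-subset has rank $n$ unless it is a circuit-hyperplane (rank $n-1$); moreover any single-element exchange out of a circuit-hyperplane lands on a basis, since two distinct circuit-hyperplanes have symmetric difference at least $4$. Plugging these rank values into Rado's inequality, the case $|J|\le n-1$ is immediate from the Hall half (if $\bigl|\bigcup_{a\in J}A_a\bigr|\le n-1$ the union is independent of size $\ge|J|$, and otherwise its rank is already $\ge n-1\ge|J|$), so the whole condition collapses to the single case $J=\mathcal{M}$, which asserts that $U:=\bigcup_{a\in\mathcal{M}}A_a$ \emph{spans} $M$. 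Equivalently: the SDR $g(\mathcal{M})$ is either a basis, in which case we are done, or a circuit-hyperplane $C$, in which case it suffices that $A_a\not\subseteq C$ for some $a$, for then replacing $g(a)$ by any element of $A_a\setminus C$ yields, by the exchange property, an independent transversal and hence a basis matched to $\mathcal{M}$.

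The crux is therefore this spanning step: ruling out that all the sets $A_a$ ($a\in\mathcal{M}$) lie inside a common hyperplane $H$. Unwinding the definition, $U\subseteq H$ means $a+e\in E(M)$ for every $a\in\mathcal{M}$ and every $e\in E(M)\setminus H$, that is, $\mathcal{M}+\bigl(E(M)\setminus H\bigr)\subseteq E(M)$. Here I would try to extract a contradiction by combining $0\notin E(M)$ with additive structure: since $\mathcal{M}$ spans but $H$ does not, there is $a_0\in\mathcal{M}\setminus H$, and one can attempt to track the translates $a_0,2a_0,\dots$ inside the finite set $E(M)$, using that in a sparse paving matroid a hyperplane is small ($|H|\le n$) so that $E(M)\setminus H$ is large. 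Converting this additive observation into a clean contradiction, and so verifying that $U$ spans, is the step I expect to be by far the hardest and the genuine heart of the argument; by contrast the reduction via Rado's theorem and the sparse paving bookkeeping above are comparatively routine.
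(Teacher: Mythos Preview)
Your architecture matches the paper's: apply Losonczy's self-matching $g$ to $E(M)$, look at $\mathcal{N}=g(\mathcal{M})$, and by sparse paving either $\mathcal{N}$ is already a basis (done) or a circuit-hyperplane, in which case a single-element swap finishes. The Rado packaging is a detour---the paper works directly with the candidate $\mathcal{N}$ and the swap---but it lands at exactly the same crux you isolated: show that some $A_a$ escapes the circuit-hyperplane $\mathcal{N}$, equivalently rule out $\mathcal{M}+(E(M)\setminus\mathcal{N})\subseteq E(M)$.

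Where you have a genuine gap is in closing this crux. Your suggested iteration $a_0,2a_0,3a_0,\dots$ does not work: from $\mathcal{M}+(E(M)\setminus\mathcal{N})\subseteq E(M)$ and $a_0\in\mathcal{M}\setminus\mathcal{N}$ you do get $2a_0\in E(M)$, but nothing prevents $2a_0\in\mathcal{N}$, and then the iteration stalls---you have no handle on $\mathcal{M}+\mathcal{N}$. The correct tool is purely additive and uses no further matroid structure. Since $\mathcal{M}$, $E(M)\setminus\mathcal{N}$ and $\mathcal{M}+(E(M)\setminus\mathcal{N})$ are all nonempty subsets of $E(M)\subseteq G\setminus\{0\}$, the Eliahou--Lecouvey consequence of Kemperman's theorem (Lemma~\ref{Kemperman's consequences}) gives
\[
|E(M)|\ \ge\ |\mathcal{M}|+|E(M)\setminus\mathcal{N}|+1\ =\ n+(|E(M)|-n)+1\ =\ |E(M)|+1,
\]
a contradiction. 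So the step you flagged as hardest is in fact a one-line additive inequality once Kemperman is on the table; the sparse paving bookkeeping you carried out correctly is what sets it up.
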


The rest of our results concern asymmetric matchings. We begin this discussion by investigating the following question:

\begin{question}\label{main sparse asy}
Let $G$ be an abelian torsion-free group or a cyclic group of prime order. Let $M$ be a matroid over $G$ and let $N$ be a sparse paving matroid over $G$, both of the same rank $n$, so that $|E(M)|\leq |E(N)|$ and $0\notin E(N)$. Is $M$ matched to $N$?
\end{question}
\begin{remark}
Notice that in Question \ref{main sparse asy}, the condition $|E(M)|\leq |E(N)|$ is imposed as otherwise if $|E(M)|$ is arbitrarily larger than $|E(N)|$, then the matchability condition $a_i+b_i\notin E(M)$ may not hold in most situations. 
\end{remark}
Note that without imposing the condition of $N$ being sparse paving, one cannot guarantee that $M$ is matched to $N$, as is shown in Section \ref{sec:examples} (see Example \ref{counterexample (asymmetric)}). 

In the next theorem  we provide four different conditions for $M$ to be matched to $N$.
We need the following definitions. 
Let $G$ be an abelian group, $x, a\in G$ and let $k$ be a positive integer.  A {\em progression} of length $k$ with difference $x$ and initial term $a$ is a subset of $G$ of the form
$
\{a,a+x,a+2x,...,a+(k-1)x\}.
$.  We say $A$ is a {\em{semi-progression}} if $A\setminus\{a\}$ is a progression, for some $a\in A$.

\begin{theorem}\label{Asy sparse paving}
 Let $M$ be a matroid over $G$ and let $N$ be a sparse paving matroid over $G$, both of the same rank $n$, so that $0\notin E(N)$. Assume further that one of the following conditions holds:
 \begin{enumerate}
     \item $|E(M)| <\min\{|E(N)|-1, p(G)\}$, or
     \item $E(M)$ is not a progression, $G$ is finite, $|E(M)|=|E(N)|-1$ and $|E(N)|<p(G)$, or
     \item $E(M)$ is neither a progression nor a semi-progression, $G$ is finite and $|E(M)|=|E(N)|< p(G)$, or
     \item $|E(M)|<|E(N)|-n-1$.
    
 \end{enumerate}
 Then $M$ is matched to $N$
\end{theorem}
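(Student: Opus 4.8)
The plan is to fix an arbitrary basis $\mathcal{M}=\{a_1,\dots,a_n\}$ of $M$ and to produce, by a single application of Rado's theorem, a basis of $N$ matched to it. For each $i$ set
\[
S_i=\{\,b\in E(N): a_i+b\notin E(M)\,\}\subseteq E(N).
\]
A matching of $\mathcal{M}$ to a basis of $N$ is exactly a system of distinct representatives $c_1,\dots,c_n$ with $c_i\in S_i$ that is independent in $N$: then $\mathcal{N}=\{c_1,\dots,c_n\}$ is a basis (since $r(N)=n$), and $b_{\pi(i)}:=c_i$ gives the required permutation. By Rado's theorem such an independent transversal exists if and only if
\[
r_N\!\Big(\bigcup_{i\in J}S_i\Big)\ge |J|\qquad\text{for every }J\subseteq\{1,\dots,n\}.
\]
Thus the entire proof reduces to verifying this rank inequality in each of the four cases.

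Next I would rewrite the union. Since $S_i=E(N)\setminus(E(M)-a_i)$, putting $A_J=\{a_i:i\in J\}$, $W_J=\bigcap_{i\in J}(E(M)-a_i)$ and $T_J=E(N)\cap W_J$, one gets $U_J:=\bigcup_{i\in J}S_i=E(N)\setminus T_J$, so $|U_J|=|E(N)|-|T_J|$. I would then record the rank function of a sparse paving matroid of rank $n$: every set of size at most $n-1$ is independent, and every set of size at least $n$ has rank $n$ except the circuit-hyperplanes, which have size exactly $n$ and rank $n-1$. Consequently $r_N(U_J)\ge|J|$ is automatic once $|U_J|\ge|J|$ when $|J|\le n-1$, while for $J=\{1,\dots,n\}$ it requires $U_{[n]}$ to contain a basis, for which it suffices that $|U_{[n]}|\ge n+1$ (or that $|U_{[n]}|=n$ and $U_{[n]}$ is not a circuit-hyperplane). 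So everything reduces to upper bounds on $|T_J|$. Two are available: the crude one, from the injection $x\mapsto x+a_i$ of $T_J$ into $E(M)$, giving $|T_J|\le|E(M)|$; and a Cauchy--Davenport/Kneser bound, using $W_J+A_J\subseteq E(M)$ together with the fact that $|E(M)|<p(G)$ forces $E(M)$ (hence $W_J+A_J$) to have trivial stabilizer, so that $|E(M)|\ge|W_J+A_J|\ge|W_J|+|A_J|-1$ and therefore $|T_J|\le|W_J|\le|E(M)|-|J|+1$.

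With these bounds the four cases fall out as follows. In case (4) the crude bound alone gives $|U_J|\ge|E(N)|-|E(M)|>n+1$ for every $J$, so $r_N(U_J)=n\ge|J|$; this is why case (4) needs neither $p(G)$ nor any progression hypothesis. In case (1) the Kneser bound gives $|U_J|\ge|E(N)|-|E(M)|+|J|-1\ge|J|+1$, which settles $|J|\le n-1$ and also yields $|U_{[n]}|\ge n+1$. Cases (2) and (3) are the tight ones. In case (2) the Kneser bound yields only $|U_J|\ge|J|$, which suffices for $|J|\le n-1$; for $J=[n]$ one has $|U_{[n]}|\ge n$, and if equality held then Kneser would be tight, i.e.\ $W_{[n]}+A_{[n]}=E(M)$ with $|W_{[n]}+A_{[n]}|=|W_{[n]}|+|A_{[n]}|-1$, and the stability (equality) case of Cauchy--Davenport in the range below $p(G)$ would force $E(M)$ to be an arithmetic progression, contrary to hypothesis; hence $|U_{[n]}|\ge n+1$. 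Case (3) is analogous but one step weaker: Kneser gives only $|U_J|\ge|J|-1$, so for $|J|\le n-1$ one rules out $|U_J|=|J|-1$ exactly as above (it would make $E(M)$ a progression), and for $J=[n]$ one first upgrades $|U_{[n]}|\ge n-1$ to $|U_{[n]}|\ge n$ (not a progression) and then to $|U_{[n]}|\ge n+1$ by invoking the structure of sets with $|A+B|=|A|+|B|$, whose failure would make $E(M)$ a semi-progression.

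I expect the main obstacle to be precisely this stability analysis behind cases (2) and (3): transferring Vosper's theorem and the characterisation of small sumsets $|A+B|=|A|+|B|$ to an arbitrary finite abelian group in the regime where all sets have size below $p(G)$, and pinning down exactly when the extremal configurations are progressions versus semi-progressions. Two further technical points will need care: the degenerate low-size cases of the stability theorems (e.g.\ $|W_J|=1$), which I expect to dispose of using the hypothesis $0\notin E(N)$, so that the relevant translate cannot lie in $E(N)$; and the sparse-paving circuit-hyperplane exception at $|J|=n$, which is exactly why the bounds must be pushed to $|U_{[n]}|\ge n+1$ rather than merely $\ge n$.
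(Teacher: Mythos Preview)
Your Rado-based approach is genuinely different from the paper's. The paper does not verify the Rado condition directly; for cases (1)--(3) it first invokes the group-matching corollary (two sets of equal size below $p(G)$, with $0$ excluded from the target, admit a matching) to produce a bijection $f\colon E(M)\to A\subseteq E(N)$ with $a+f(a)\notin E(M)$, sets $\mathcal{N}=\{f(a_1),\dots,f(a_n)\}$, and is finished unless $\mathcal{N}$ is a circuit-hyperplane of $N$. In that bad case it applies Kneser and Kemperman's critical-pair theorem to the sumset $\mathcal{M}+(E(N)\setminus\mathcal{N})\subseteq E(M)$ to locate a single element $x\in E(N)\setminus\mathcal{N}$ with $a_i+x\notin E(M)$, and swaps it in for $b_i$. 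Case (4) is a greedy choice of the $b_i$ followed by the same swap. Both proofs thus rest on the same additive ingredients (Kneser plus the Kemperman/Vosper critical-pair classification), but applied to different sumsets: yours to $W_J+A_J\subseteq E(M)$, the paper's to $\mathcal{M}+(E(N)\setminus\mathcal{N})\subseteq E(M)$.

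Your anticipated obstacle --- needing a structure theorem for $|A+B|=|A|+|B|$ in case (3) --- dissolves if you exploit one observation systematically rather than only in ``degenerate'' cases. Since each $a_i\in\mathcal{M}\subseteq E(M)$, we always have $0\in E(M)-a_i$, hence $0\in W_J$; and $0\notin E(N)$ forces $0\notin T_J$. Therefore $|T_J|\le|W_J|-1\le|E(M)|-|J|$, so $|U_J|\ge|E(N)|-|E(M)|+|J|$, one unit sharper than the bound you wrote down. With this, cases (1), (2) and (4) are immediate (in particular $|U_{[n]}|\ge n+1$ outright in (1) and (2)), and in case (3) the only remaining issue is $J=[n]$ with $|U_{[n]}|=n$: equality then forces $|W_{[n]}|=|E(M)|-n+1$ with the Kneser inequality tight, so $W_{[n]}+\mathcal{M}=E(M)$ is an honest critical pair and Kemperman's theorem makes $E(M)$ a progression (when $|W_{[n]}|=1$ one has $W_{[n]}=\{0\}$, $T_{[n]}=\emptyset$, $U_{[n]}=E(N)$, and $M,N$ are free). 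No $|A+B|=|A|+|B|$ structure result is needed; indeed your route, so completed, never uses the semi-progression clause in (3) nor the progression clause in (2).
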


Conditions (1) implies:
\begin{corollary}\label{main corollary 1}
Suppose  $G$ is an abelian torsion-free group or a cyclic group of prime order. Let $M$ be a matroid over $G$ and $N$ be a sparse paving matroid over $G$, both of the same rank $n$, so that
$|E(M)|< |E(N)|-1$. If $0\notin E(N)$, then $M$ is matched to $N$.
\end{corollary}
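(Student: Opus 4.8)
The plan is to reduce the corollary directly to condition (1) of Theorem \ref{Asy sparse paving}, by verifying that $\min\{|E(N)|-1,\, p(G)\} = |E(N)|-1$ in each of the two cases for $G$. Once this is established, the hypothesis $|E(M)| < |E(N)|-1$ becomes precisely the inequality $|E(M)| < \min\{|E(N)|-1,\, p(G)\}$ appearing in condition (1), and the conclusion that $M$ is matched to $N$ follows immediately from the theorem. All the remaining hypotheses of the theorem ($M$ a matroid over $G$, $N$ a sparse paving matroid over $G$ of the same rank $n$, and $0\notin E(N)$) are carried over unchanged.

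First I would handle the torsion-free case. Here every nonzero subgroup of $G$ is infinite, so $G$ has no finite nonzero subgroup. Interpreting $p(G)$ as $+\infty$ in this situation --- or, more concretely, simply observing that $E(M)$ is a finite set so that $|E(M)| < p(G)$ holds automatically --- one concludes that the second term of the minimum is irrelevant and $\min\{|E(N)|-1,\, p(G)\} = |E(N)|-1$.

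Next I would treat the case where $G$ is cyclic of prime order $p$. Then the only nonzero subgroup of $G$ is $G$ itself, so $p(G) = p$. Since $E(N)\subseteq G$, we have $|E(N)| \le p$, and therefore $|E(N)|-1 \le p-1 < p = p(G)$. Hence again $\min\{|E(N)|-1,\, p(G)\} = |E(N)|-1$.

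In both cases the hypothesis $|E(M)| < |E(N)|-1$ yields $|E(M)| < \min\{|E(N)|-1,\, p(G)\}$, so condition (1) of Theorem \ref{Asy sparse paving} is satisfied and the theorem delivers the conclusion. Since the whole argument amounts to evaluating $p(G)$, I do not expect any substantive obstacle; the only point requiring slight care is the degenerate reading of $p(G)$ in the torsion-free case, which I would dispose of exactly as above by noting that the finiteness of $E(M)$ makes the comparison with $p(G)$ vacuous.
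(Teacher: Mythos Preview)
Your proposal is correct and is exactly the intended derivation: the paper simply states that the corollary follows from condition~(1) of Theorem~\ref{Asy sparse paving}, and your argument---checking that in both the torsion-free and the prime-cyclic case one has $|E(N)|-1\le p(G)$ (trivially in the first, via $|E(N)|\le p$ in the second), so that $|E(M)|<|E(N)|-1$ forces $|E(M)|<\min\{|E(N)|-1,p(G)\}$---is precisely how one unpacks that implication.
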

Conditions (2) and (3) imply:
\begin{corollary}\label{main corollary 2}
Let $p$ be a prime number. Let $M$ be a matroid over $\mathbb{Z}/p\mathbb{Z}$ and $N$ be a sparse paving matroid over $\mathbb{Z}/p\mathbb{Z}$, both of the same rank $n$, so that
\begin{enumerate}

     \item $E(M)$ is not a progression, and $|E(M)|=||E(N)|-1$, or
    \item $E(M)$ is neither a progression nor a semi-progression,  and $|E(M)|=|E(N)|$.
    \end{enumerate}
    If $0\notin E(N)$, then $M$ is matched to $N$.
\end{corollary}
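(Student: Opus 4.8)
The plan is to deduce Corollary \ref{main corollary 2} directly from Theorem \ref{Asy sparse paving} by specializing the ambient group to $G=\mathbb{Z}/p\mathbb{Z}$ and checking that the size hypotheses of the two relevant parts of that theorem hold automatically. First I would record the value of the invariant $p(G)$ in this setting: since $p$ is prime, the only subgroups of $\mathbb{Z}/p\mathbb{Z}$ are the trivial one and the whole group, so the smallest non-zero subgroup is $G$ itself and therefore $p(G)=p$. This identification is what connects the abstract parameter appearing in Theorem \ref{Asy sparse paving} to the concrete prime $p$.

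The key observation is that the hypothesis $0\notin E(N)$ forces $E(N)\subseteq G\setminus\{0\}$, whence $|E(N)|\leq p-1<p=p(G)$. This single strict inequality supplies exactly the size condition that is stated explicitly in Theorem \ref{Asy sparse paving} but omitted from the corollary. For part (1) of the corollary, where $E(M)$ is not a progression and $|E(M)|=|E(N)|-1$, the group $\mathbb{Z}/p\mathbb{Z}$ is finite and the inequality just derived gives $|E(N)|<p(G)$; these are precisely the hypotheses of condition (2) of Theorem \ref{Asy sparse paving}, so that $M$ is matched to $N$. For part (2), where $E(M)$ is neither a progression nor a semi-progression and $|E(M)|=|E(N)|$, the same inequality yields $|E(M)|=|E(N)|<p(G)$, which matches condition (3) of the theorem, and the conclusion again follows.

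Because this is a corollary, there is no substantive obstacle beyond the bookkeeping of matching the hypotheses; the only point requiring a moment's care is recognizing that $0\notin E(N)$ is exactly what upgrades the trivial bound $|E(N)|\leq|G|=p$ to the \emph{strict} inequality $|E(N)|<p(G)$ demanded by Theorem \ref{Asy sparse paving}. Once $p(G)=p$ and this strict bound are in place, both cases are immediate instances of the theorem.
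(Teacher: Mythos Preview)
Your argument is correct and follows exactly the route the paper intends: the corollary is stated immediately after Theorem~\ref{Asy sparse paving} with the remark that conditions (2) and (3) imply it, and your proof simply makes explicit why---namely that $p(G)=p$ for $G=\mathbb{Z}/p\mathbb{Z}$ and that $0\notin E(N)$ forces $|E(N)|\le p-1<p(G)$, so the hypotheses of parts (2) and (3) of the theorem are satisfied.
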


It follows from Corollary \ref{main corollary 1} and Corollary  \ref{main corollary 2} that Question \ref{main sparse asy} boils down to the following questions in which the remaining cases of matchability of matroids are addressed.

\begin{question}\label{remaining cases infinite}
Let $G$ be an abelian torsion-free group. Let $M$ be a matroid over $G$ and $N$ be a sparse paving matroid over $G$, both of the same rank $n$, so that $|E(M)|=|E(N)|-1$ or $|E(M)|=|E(N)|$. If $0\notin E(N)$, is $M$ matched to $N$?
\end{question}

\begin{question}\label{remaining cases finite}
Let $p$ be a prime number. Let $M$ be a matroid over $\mathbb{Z}/p\mathbb{Z}$ and $N$ be a sparse paving matroid over $\mathbb{Z}/p\mathbb{Z}$, both of the same rank $n$, so that 
\begin{enumerate}
    \item $|E(M)|=|E(N)|-1$ and $E(M)$ is a progression, or
    \item $|E(M)|=|E(N)|$ and $E(M)$ is a progression or $E(M)$ is a semi-progression.
\end{enumerate}
If $0\notin E(N)$, is $M$ matched to $N$?
\end{question}

In the special case where $N$ is a uniform  matroid, we can prove matchability for the general case when $|E(M)|\leq|E(N)|$.

\begin{proposition}\label{asymmetric, uniform}
 Let $M$ be any matroid over $G$ and let $N$ be a uniform matroid over $G$, both of the same rank $n$, such that $0\notin E(N)$. If 
$|E(M)|\leq|E(N)|<p(G)$, then $M$ is matched to $N$.
\end{proposition}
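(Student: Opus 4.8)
The plan is to fix an arbitrary basis $\mathcal{M}=\{a_1,\dots,a_n\}$ of $M$ and produce, via Hall's marriage theorem, a system of distinct representatives inside $E(N)$ that avoids the forbidden sums. For each $a\in\mathcal{M}$ set $A_a=\{b\in E(N): a+b\notin E(M)\}$, the set of admissible partners of $a$. A bijection $a_i\mapsto b_{\pi(i)}$ as in Definition \ref{def, matching matroid} is precisely a system of distinct representatives for the family $\{A_a\}_{a\in\mathcal{M}}$; since $N$ is uniform of rank $n$, any $n$ distinct elements of $E(N)$ form a basis, so no extra constraint on $\mathcal{N}$ is imposed. Thus it suffices to verify Hall's condition $\bigl|\bigcup_{a\in S}A_a\bigr|\ge |S|$ for every $S\subseteq\mathcal{M}$.

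For nonempty $S$, write $W_S=\bigcap_{a\in S}(E(M)-a)$, so that $\bigcup_{a\in S}A_a=E(N)\setminus(E(N)\cap W_S)$ and Hall's condition becomes $|E(N)\cap W_S|\le |E(N)|-|S|$. I would bound $|W_S|$ by an additive-combinatorial inequality: since $W_S+S\subseteq E(M)$ we have $|W_S+S|\le |E(M)|\le |E(N)|<p(G)$, and because a sumset of size less than $p(G)$ must have trivial stabilizer, Kneser's theorem collapses to $|W_S+S|\ge |W_S|+|S|-1$. Hence $|W_S|\le |E(M)|-|S|+1$.

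The crucial step, and the one that makes the hypothesis $0\notin E(N)$ do its work, is to shave off the stray $+1$. Since $S\subseteq\mathcal{M}\subseteq E(M)$, each $a\in S$ lies in $E(M)$, so $0\in E(M)-a$ for every $a\in S$, i.e. $0\in W_S$. As $0\notin E(N)$, the element $0$ belongs to $W_S$ but not to $E(N)\cap W_S$, giving $|E(N)\cap W_S|\le |W_S|-1\le |E(M)|-|S|\le |E(N)|-|S|$. This is exactly Hall's condition, so the desired system of distinct representatives exists; the resulting $n$ distinct elements form a basis $\mathcal{N}$ of $N$ and, through the permutation they determine, $\mathcal{M}$ is matched to $\mathcal{N}$. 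As $\mathcal{M}$ was an arbitrary basis of $M$, the matroid $M$ is matched to $N$.

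I expect the main obstacle to be the correct quantitative form of the sumset bound for a general abelian group rather than just $\mathbb{Z}$ or $\mathbb{Z}/p\mathbb{Z}$: one must confirm that $|E(M)|<p(G)$ forces the stabilizer of $W_S+S$ to be trivial, so that Kneser's inequality reduces to the clean estimate $|W_S+S|\ge |W_S|+|S|-1$. Everything else is a routine packaging of Hall's theorem, but it is the interplay of this additive bound with the observation $0\in W_S$ that pins down exactly why the uniform case succeeds for the full range $|E(M)|\le |E(N)|$.
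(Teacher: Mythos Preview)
Your argument is correct. The verification that Kneser's stabilizer $H$ is trivial is sound: since $0\in W_S$ (as $S\subseteq E(M)$) and $W_S+S+H=W_S+S\subseteq E(M)$, any coset of $H$ inside $W_S+S$ has size at most $|E(M)|<p(G)$, forcing $H=\{0\}$; the remaining steps are routine.

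The paper's proof is shorter but not essentially different in content. Rather than setting up Hall's condition for a fixed basis $\mathcal{M}$, the paper picks a subset $A\subseteq E(N)$ with $|A|=|E(M)|$ and invokes Corollary~\ref{matchable sets, p(G)} to obtain a single group-theoretic matching $f:E(M)\to A$; since $N$ is uniform, $f(\mathcal{M})$ is automatically a basis for every basis $\mathcal{M}$ of $M$. That corollary is itself proved (in the cited references) by exactly the Hall--Kneser computation you carry out, so you are effectively inlining the black box. The paper's packaging has the minor advantage that one matching $f$ works simultaneously for all bases of $M$, whereas your argument constructs a potentially different $\mathcal{N}$ for each $\mathcal{M}$; your version, on the other hand, is self-contained and makes the role of the hypotheses $0\notin E(N)$ and $|E(M)|<p(G)$ completely explicit.
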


One of the cases that is not covered by Theorem \ref{Asy sparse paving} is the following:
\begin{question}\label{almost complete matroids}
Let $G$ be an abelian group and let $M,N$ be matroids over $G$ of rank $n$, so that  $N$ is sparse paving,  $0\notin E(N)$ and $|E(M)|=|E(N)|=n+1<p(G)$.  Is it true that  $M$ is matched to $N$?
\end{question}

Though we have not been able to settle this case, we obtain a partial result whose proof relies on the existence of a total order on the set $$E(M)\cup E(N)\cup (E(M)+E(N))\cup\{0\}$$  that is compatible with the group structure of $G$.
Given an abelian group $G$ and a subset $A\subset G$, a total order on $A$ is {\em compatible with  the group structure}, if for every $a, b, c \in A$, $a\preceq b$ implies $a+c \preceq b+c$. 
 Assuming that $\preceq$ is a total order on $A\cup\{0\}$, we say that an element $x \in A$ is {\it positive} if $0\prec x$. $x$ is {\it negative} if $x \prec 0$ (see Section \ref{sec:total order} for definitions of positive and negative subsets of an abelian group.)

On the existence of such orders, see Section \ref{sec:total order}. In particular, Theorem \ref{linearly ordered groups} and Lemma \ref{rectification} imply that such an order exists whenever $G$ is torsion-free, or when $$|E(M)\cup E(N)\cup (E(M)+E(N))\cup\{0\}| < \lceil{\log_2 (p(G))\rceil}.$$

Note that in the following theorem, we only assume that $N$ is a paving matroid but not necessarily a sparse paving matroid as before. 

\begin{theorem}\label{asymmetric, sparse paving}
Let  $M$ be a matroid over $G$ and let  $N$ be a paving matroid over $G$, both of rank $n$, so that $|E(M)|=|E(N)|=n+1< p(G)$. Assume  that there exists a total order $\preceq$ on $E(M)\cup E(N) \cup (E(M)+E(N))\cup\{0\}$ that is compatible with the group structure of $G$, so that $E(M)$ and $E(N)$ are both positive, and  $\max(E(M))\notin E(M)+E(N)$. 
Then $M$ is matched to $N$.
\end{theorem}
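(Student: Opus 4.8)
The plan is to fix an arbitrary basis $\mathcal{M}$ of $M$ and an arbitrary basis $\mathcal{N}$ of $N$ and produce a matching between them directly; since this will work for \emph{every} choice of $\mathcal{N}$, it a fortiori provides a basis of $N$ matched to $\mathcal{M}$, which is all that part~(2) of Definition~\ref{def, matching matroid} requires. Consider the bipartite graph $\Gamma$ with parts $\mathcal{M}$ and $\mathcal{N}$ in which $a\in\mathcal{M}$ is joined to $b\in\mathcal{N}$ precisely when $a+b\notin E(M)$ (the ``allowed'' pairs). A matching of $\mathcal{M}$ to $\mathcal{N}$ through a permutation is exactly a perfect matching of $\Gamma$ (both parts have size $n$), so by Hall's theorem it suffices to verify the condition $|\Gamma(S)|\ge|S|$ for every $S\subseteq\mathcal{M}$, where $\Gamma(S)$ denotes the allowed neighbourhood of $S$.

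Suppose for contradiction that Hall's condition fails, so that some $S\subseteq\mathcal{M}$ has $|\Gamma(S)|<|S|$. Setting $T:=\mathcal{N}\setminus\Gamma(S)$ we obtain nonempty sets $S\subseteq\mathcal{M}$ and $T\subseteq\mathcal{N}$ with $|S|+|T|\ge n+1$ and $a+b\in E(M)$ for all $a\in S$, $b\in T$; in particular $S+T\subseteq E(M)$. On the other hand $S+T\subseteq E(M)+E(N)$, and the hypothesis $\max(E(M))\notin E(M)+E(N)$ then forces $S+T\subseteq E(M)\setminus\{\max(E(M))\}$, a set of size exactly $|E(M)|-1=n$.

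The key tool is the sumset inequality in the ordered setting. Because $\preceq$ is a total order on $E(M)\cup E(N)\cup (E(M)+E(N))\cup\{0\}$ compatible with addition, listing $S=\{s_1\prec\cdots\prec s_k\}$ and $T=\{t_1\prec\cdots\prec t_l\}$ yields the strictly increasing chain $s_1+t_1\prec\cdots\prec s_1+t_l\prec s_2+t_l\prec\cdots\prec s_k+t_l$ inside $S+T$, so $|S+T|\ge|S|+|T|-1\ge n$. Combined with the previous paragraph this forces $S+T=E(M)\setminus\{\max(E(M))\}$, and hence $\min(S+T)=\min(E(M))$. But compatibility also gives $\min(S+T)=\min(S)+\min(T)$, while $\min(S)\succeq\min(E(M))$ (as $\min(S)\in E(M)$) and $\min(T)\succ 0$ (as $E(N)$ is positive), whence $\min(S)+\min(T)\succ\min(E(M))$. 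This contradiction establishes Hall's condition, so $\Gamma$ has a perfect matching and $\mathcal{M}$ is matched to $\mathcal{N}$.

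The conceptual crux — and the step I expect to be the main obstacle — is the initial reduction together with the recognition that the ordered sumset bound is precisely the right instrument: once the hypothesis on $\max(E(M))$ confines $S+T$ to an $n$-element subset of $E(M)$, the gap in the inequality $|S+T|\ge|S|+|T|-1$ closes to equality, and an extremal (minimum) analysis powered by the positivity of $E(N)$ delivers the contradiction. The only delicate verifications are the clean translation of the failure of Hall's condition into the pair $(S,T)$ and the checking that every sum appearing in the chain and in the minimum computation lies in $E(M)+E(N)$, the domain on which $\preceq$ is compatible with addition, so that the order arguments are legitimate. I note that this argument uses neither that $N$ is paving nor the value of $|E(N)|$ beyond $|\mathcal{N}|=n$; the hypotheses actually exploited are $|E(M)|=n+1$, the positivity of $E(N)$, the existence of the compatible order (so that $n+1<p(G)$ and Theorem~\ref{linearly ordered groups} or Lemma~\ref{rectification} apply), and $\max(E(M))\notin E(M)+E(N)$.
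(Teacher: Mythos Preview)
Your argument is correct, and it differs substantially from the paper's. The paper splits on whether $x:=\max(E(M))$ lies in the chosen basis $\mathcal{M}$: if $x\notin\mathcal{M}$ it invokes Corollary~\ref{matchable sets, p(G)} (which needs $|\mathcal{M}|<p(G)$) to match $\mathcal{M}$ to an arbitrary basis $\mathcal{N}$ in the group sense, and then uses $x\notin E(M)+E(N)$ to upgrade this to a matroid matching; if $x\in\mathcal{M}$ it matches all of $E(M)$ to $E(N)$ in the group sense, and when the image of $\mathcal{M}$ fails to be a basis it uses that $N$ is \emph{paving} to know the first $n-1$ images are independent, augments to a basis, and handles the new element via positivity and maximality of $x$. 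Your route is a single Hall argument on the ``allowed pairs'' bipartite graph, where the failure of Hall's condition produces sets $S,T$ with $S+T\subseteq E(M)\setminus\{x\}$; the elementary ordered-sumset bound $|S+T|\ge|S|+|T|-1$ then forces equality, and comparing minima yields the contradiction. Your approach is more self-contained (no appeal to Corollary~\ref{matchable sets, p(G)} or Kneser) and, as you observe, actually establishes the stronger conclusion that \emph{every} basis of $M$ is matched to \emph{every} basis of $N$, while using neither the paving hypothesis on $N$, nor $|E(N)|=n+1$, nor the bound $n+1<p(G)$ directly. The paper's argument, by contrast, makes genuine use of the paving hypothesis in Case~2 and of $p(G)$ via the group-matching corollary.
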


By symmetry,  Theorem \ref{asymmetric, sparse paving} also holds when  $E(M)$ and $E(N)$ are both negative and $\min(E(M))\notin E(M)+E(N)$. 
This raises a natural question:
\begin{question}
Does Theorem \ref{asymmetric, sparse paving} hold if $E(M)$ and $E(N)$ contain positive and negative elements simultaneously?
\end{question}

In the following theorem, we prove an analogue of Theorem \ref{Asy sparse paving} (3) by removing the condition of $N$ being sparse paving, in the price of imposing the extra assumptions that $|(-a+E(M))\cap E(N)|\neq n$, for all $a\in E(M)$, and $|E(M)|=|E(N)|=n+1$.

\begin{theorem}\label{$|E(M)|=|E(N)|=n+1$}
Let $M$ and $N$ be  matroids over a finite abelian group $G$, both  of  rank $n$, so that $|E(M)|=|E(N)|=n+1<p(G)$. Assume $|(-a+E(M))\cap E(N)|\neq n$, for all $a\in E(M)$. Assume further that $E(M)$ is neither a progression nor a semi-progression. 
If $0\notin E(N)$ then $M$ is matched to $N$.
\end{theorem}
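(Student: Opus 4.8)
The plan is to recast the existence of a matching from a fixed basis of $M$ as an \emph{independent transversal} problem, settle it with Rado's theorem, and verify the resulting rank inequality by means of Kneser's theorem together with the exclusion of progressions and semi-progressions.

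Fix a basis $\mathcal{M}=\{a_1,\dots,a_n\}$ of $M$. As $r(M)=n$ and $|E(M)|=n+1$, the corank of $M$ is $1$, so $E(M)$ carries a unique circuit and $\mathcal{M}=E(M)\setminus\{a_0\}$ for some $a_0$ on that circuit. For each $i$ put $D_i=\{b\in E(N):a_i+b\notin E(M)\}$, the set of legal partners of $a_i$. Then $\mathcal{M}$ is matched to a basis of $N$ precisely when $D_1,\dots,D_n$ admit a transversal $\{b_1,\dots,b_n\}$ (distinct, $b_i\in D_i$) that is independent in $N$: such a transversal has size $n=r(N)$ and is therefore a basis, and $a_i+b_i\notin E(M)$ realizes the matching. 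By Rado's theorem this holds if and only if $r_N\big(\bigcup_{i\in J}D_i\big)\ge|J|$ for all $J\subseteq\{1,\dots,n\}$. Writing $S=\{a_i:i\in J\}$ and $D(S)=\bigcup_{i\in J}D_i=\{b\in E(N):a+b\notin E(M)\text{ for some }a\in S\}$, I must verify
\[
r_N\big(D(S)\big)\ge|S|\qquad\text{for every }S\subseteq\mathcal{M}.
\]

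First I would extract the two standing hypotheses. Since the number of $b\in E(N)$ with $a+b\in E(M)$ equals $|(-a+E(M))\cap E(N)|$, we have $|D(\{a\})|=(n+1)-|(-a+E(M))\cap E(N)|$; the assumption $|(-a+E(M))\cap E(N)|\neq n$ forbids $|D(\{a\})|=1$, and $0\notin E(N)$ forbids $|D(\{a\})|=0$ (which would force $E(N)=-a+E(M)\ni 0$). Hence $|D(\{a\})|\ge2$ for all $a$, so any nonempty $S$ with $|D(S)|=|S|$ has $|S|\ge2$. Next, for $S\subseteq E(M)$ set $S'=E(N)\setminus D(S)$; then $S+S'\subseteq E(M)$, and as $|S+S'|\le|E(M)|=n+1<p(G)$ the stabilizer of $S+S'$ is trivial, so Kneser's theorem gives $|S+S'|\ge|S|+|S'|-1$, i.e. $|D(S)|\ge|S|-1$. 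Because $E(M)$ is not a progression, the equality case of Kneser (Vosper's theorem, together with a periodicity argument when $|S'|=1$) rules out $|D(S)|=|S|-1$ and upgrades this to $|D(S)|\ge|S|$ for every $S$. Finally, the corank-$1$ matroid $N$ satisfies $r_N(X)=|X|$ unless $X$ contains the unique circuit $C_N$, in which case $r_N(X)=|X|-1$. Combining these, Rado's inequality can fail only at a \emph{tight} set $S$, with $|D(S)|=|S|$ and $C_N\subseteq D(S)$; equivalently $S'\subseteq E(N)\setminus C_N$ (the coloops of $N$) and $0\notin S'$.

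For such a tight $S$ we have $|S|,|S'|\ge2$ and, by the two bounds, $|S+S'|\in\{n,n+1\}$. If $|S+S'|=n$, this is again the equality case of Kneser: Vosper's theorem forces $S+S'$ to be an arithmetic progression of length $n$ contained in $E(M)$, whence $E(M)$ (of size $n+1$) is a progression or a semi-progression, which is excluded. The remaining case $|S+S'|=n+1$, i.e. $S+S'=E(M)$ with $|S+S'|=|S|+|S'|$, is the main obstacle. Here I would rectify $S\cup S'$ into $\mathbb{Z}$ via Lemma \ref{rectification} (legitimate since all sets are smaller than $p(G)$) and apply the structure theorem for sumsets of doubling $|S|+|S'|$: $S$ and $S'$ lie in arithmetic progressions $P,Q$ of common difference with $|P|=|S|+1$, $|Q|=|S'|+1$, so $E(M)=S+S'$ is the length-$(n+2)$ progression $P+Q$ with exactly one element $m_0$ deleted. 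The constraints $S\subseteq E(M)$ and $0\notin S'$ then pin down $m_0$: counting representations $m_0=p+p'$ with $p\in P$, $p'\in Q$ shows that a deep-interior $m_0$ has at least three representations, which cannot all be destroyed by the single deletions turning $P,Q$ into $S,S'$ once $0\notin S'$; hence $m_0$ is extremal or next-to-extremal, making $E(M)$ a progression or semi-progression, a contradiction. This closes the last case, so Rado's condition holds for every basis $\mathcal{M}$ and $M$ is matched to $N$. I expect the bulk of the work to be precisely this final additive analysis — reconciling the progression structure forced by the small doubling of $S+S'$ with the constraints $0\notin E(N)$ and $S\subseteq E(M)$ — and in particular the careful treatment of the boundary sizes $|S|=2$ or $|S'|=2$, where the clean Freiman-type conclusion needs reinforcement.
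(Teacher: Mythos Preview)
Your overall architecture matches the paper's: reduce via Rado to the rank inequality, pass to the complementary set $S'=E(N)\setminus D(S)=\bigcap_{i\in J}\bigl((-a_i+E(M))\cap E(N)\bigr)$, note $S+S'\subseteq E(M)$, and combine Kneser with Kemperman's critical-pair theorem (Lemma~\ref{Kemperman's critical theorem}) to force $E(M)$ to be a progression or semi-progression. Two genuine gaps remain.

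First, your claim that a tight $S$ has $|S'|\ge 2$ is unjustified. When $|S|=n$ (so $S=\mathcal{M}$) one gets $|S'|=1$, and then $S+S'$ is merely a translate of $\mathcal{M}$; the critical-pair theorem requires $|S'|>1$ and yields no structure here. The paper treats this case separately: the single element $s'$ of $S'$ lies in $\bigcap_{i\in[n]}(-a_i+E(M))$, and a short digraph argument (Lemma~\ref{progression}) shows that if $E(M)$ is not a progression this intersection equals $\{0\}$, whence $0=s'\in E(N)$, a contradiction. Second, your handling of the branch $|S+S'|=n+1=|S|+|S'|$ does not work as written. The rectification lemma requires the set to have size below $\lceil\log_2 p(G)\rceil$, not merely below $p(G)$, so ``legitimate since all sets are smaller than $p(G)$'' is incorrect; and the structure statement you invoke for $|A+B|=|A|+|B|$ (that $A,B$ sit inside arithmetic progressions of lengths $|A|+1,|B|+1$ with a common difference) is false in general --- take $A=\{0,2\}$, $B=\{0,3\}$ in $\mathbb{Z}$. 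The paper replaces all of this with a one-line observation: since $S\subseteq E(M)=S+S'$ and $0\notin S'\subseteq E(N)$, we have $E(M)=S+(S'\cup\{0\})$ with $|E(M)|=|S|+|S'\cup\{0\}|-1$, so $\{S,\,S'\cup\{0\}\}$ is itself a critical pair, and Lemma~\ref{Kemperman's critical theorem} (applicable because $|S|\ge 2$) directly gives that $E(M)$ is a progression.
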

\begin{remark}
It is worth pointing out that a condition akin to $|(-a+E(M))\cap E(N)|\neq n$ in Theorem \ref{$|E(M)|=|E(N)|=n+1$} appears in \cite[Theorem 1.3]{Hamidoune}. That is, it is proved that, in general, in the group setting for a subset $A\subset G$ and a Chowla subset $B\subset G$ both of the same size $n$, one can produce at least $\min\{\frac{n+1}{3}, \frac{|<B>|-n-1}{2|<B>|-n-4}\}$ matchings, (where $<B>$ stands for the subgroup generated by $B$), unless certain situations, one of which is when $|(-a+A)\cap B|=n-1$, for some $a\in A$. 
 
\end{remark}
 The special case where $|E(M)|=|E(N)|=n+1$ may be treated in a different way from Theorem \ref{$|E(M)|=|E(N)|=n+1$}, without any additive assumption on the ground sets $E(M)$ and $E(N)$, but  by imposing a matroidal condition on $N$ as follows:
 
 \begin{theorem}\label{coloopless sparse paving}
 Let $M$ and $N$ be two matroids over an abelian group $G$, both of rank $n$ so that $|E(M)|=|E(N)|=n+1<p(G)$. Assume further that $N$ is a coloopless sparse paving matroid. If $0\notin E(N)$ then $M$ is matched to $N$.
 \end{theorem}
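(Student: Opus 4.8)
The first thing I would record is that the matroidal hypothesis on $N$ is extremely rigid at this size. Since $N$ has rank $n$ and $|E(N)|=n+1$, its bases are exactly the independent $n$-subsets of $E(N)$, each of the form $E(N)\setminus\{e\}$. Because $N$ is paving, every $(n-1)$-subset is independent, so a non-basic $n$-subset would itself have to be a circuit; and $E(N)\setminus\{e\}$ is the unique $n$-subset avoiding $e$, so it fails to be a basis exactly when $e$ lies in every basis, i.e.\ when $e$ is a coloop. Hence ``$N$ coloopless'' forces every $n$-subset of $E(N)$ to be a basis, so $N$ is the uniform matroid of rank $n$ on its $(n+1)$-element ground set. (The clause ``$N^\ast$ paving'' is automatic here, as $N^\ast$ has rank $1$.) With this observation the statement is precisely the equal-size instance $|E(M)|=|E(N)|=n+1<p(G)$ of Proposition~\ref{asymmetric, uniform}, from which the theorem follows at once.

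To make the argument self-contained I would reprove this uniform case directly by a Hall-type matching argument, which also pinpoints where the numerics are tight. Fix an arbitrary basis $\mathcal{M}=\{a_1,\dots,a_n\}$ of $M$; it suffices to inject $\mathcal{M}$ into $E(N)$ with image $\mathcal{N}$ satisfying the matching condition, since any $n$-subset of $E(N)$ is a basis. Form the bipartite graph on parts $\mathcal{M}$ and $E(N)$ with an edge $a\sim b$ whenever $a+b\notin E(M)$; a matching saturating $\mathcal{M}$ then supplies both the target basis $\mathcal{N}$ and the permutation $\pi\in S_n$ realizing $\mathcal{M}\to\mathcal{N}$. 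By Hall's theorem such a matching exists exactly when $|\Gamma(S)|\ge|S|$ for every $S\subseteq\mathcal{M}$, where $\Gamma(S)$ denotes the neighborhood. Writing $B_S=\{b\in E(N):S+b\subseteq E(M)\}$ for the vertices adjacent to no element of $S$, so that $B_S=E(N)\setminus\Gamma(S)$ and $|\Gamma(S)|=(n+1)-|B_S|$, Hall's condition becomes $|S|+|B_S|\le n+1$.

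The hard part is verifying this inequality, and it is genuinely tight by one. Since $S+B_S\subseteq E(M)$ has at most $n+1$ elements while $p(G)>n+1$ rules out any nontrivial stabilizer, Kneser's theorem yields $|S+B_S|\ge|S|+|B_S|-1$, hence only the weaker bound $|S|+|B_S|\le n+2$. I would close the gap by excluding the extremal case $|S|+|B_S|=n+2$: there $S+B_S=E(M)$ and equality holds in Kneser, so by the structure theorem for critical pairs (with $|S|,|B_S|\ge 2$) both $S$ and $B_S$ are arithmetic progressions with a common difference $d$, and $E(M)=S+B_S$ is a progression of length $n+1$ with the same difference. Aligning the progressions and using $S\subseteq E(M)$ to pin down the initial term of $B_S$ forces $0\in B_S\subseteq E(N)$, contradicting $0\notin E(N)$; the degenerate cases $|B_S|=1$ (impossible, as it would force $|S|=n+1$) and $|S|=1$ (where $E(M)=a+E(N)$ again yields $0\in E(N)$) are checked directly. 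Thus $|S|+|B_S|\le n+1$ for all $S$, Hall's condition holds, and the matching exists. The essential point to get right is the interplay between the off-by-one in Kneser, the progression structure of extremal pairs, and the hypothesis $0\notin E(N)$, which is exactly what compensates for the missing unit.
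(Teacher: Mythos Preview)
Your first paragraph is correct and is essentially the paper's argument. Both proofs amount to observing that a coloopless sparse paving matroid of rank $n$ on $n+1$ elements is the uniform matroid $U_{n,n+1}$, and then using the group matching $f:E(M)\to E(N)$ furnished by Corollary~\ref{matchable sets, p(G)} to carry any basis $\mathcal{M}$ to an $n$-subset $f(\mathcal{M})\subseteq E(N)$, which is automatically a basis. Your route to uniformity (an $n$-subset $E(N)\setminus\{e\}$ fails to be a basis iff $e$ is a coloop) is more direct than the paper's, which argues by contradiction: if $\mathcal{N}=f(\mathcal{M})$ were not a basis it would be a circuit-hyperplane, and the bound of Theorem~\ref{counting hyperplanes} (which gives $\lambda\le 1$ when $m=n+1$) would make it the \emph{unique} one, forcing the remaining element to be a coloop. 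Either way one lands on Proposition~\ref{asymmetric, uniform}, and the proof is complete after your first paragraph.

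Your self-contained Hall/Kneser argument in the remaining paragraphs is superfluous, and as written it has a gap. To kill the extremal case $|S|+|B_S|=n+2$ you invoke the structure theorem for critical pairs (Lemma~\ref{Kemperman's critical theorem} in the paper), but that lemma is stated for \emph{finite} $G$ and requires $|S|+|B_S|-1\le p(G)-2$, i.e.\ $n+3\le p(G)$, whereas the hypothesis of the theorem only gives $n+1<p(G)$; neither condition is available here. The idea of pushing $0$ into $B_S\subseteq E(N)$ in the progression case is nice and does work once you have the critical-pair structure, but you cannot reach that structure under the stated hypotheses. Since your first paragraph already proves the theorem, I would simply drop the second and third paragraphs.
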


We now turn our attention to transversal matroids. Let a set $E$ be partitioned into disjoint sets $E_1,...,E_l$ and let 
$$\mathcal{I}=\{X\subset E:|X\cap E_i|\leq 1, \text{ for all}\ i=1,...,l\}.$$ Then $M=(E, \mathcal{I})$ is a matroid called a  {\em transversal matroid}. 

 Similar to  Theorem \ref{asymmetric, sparse paving}, we assume the existence of a total order on the set $E(M)\cup E(N)\cup (E(M)+E(N))\cup\{0\}$ that is compatible with the group structure of $G$.

\begin{theorem}\label{partition, asy 1}
Let  $M=(E,\mathcal{I})$ and $N=(E',\mathcal{I}')$ be two transversal matroids over $G$, with partition sets $\{E_i\}_{i\in [n]}, \{E'_i\}_{i\in [n]}$, respectively, such that $|E_i|=|E'_i|$ for all $i$. Assume that there exists a total order $\preceq$ on $E\cup E'\cup (E+E')\cup\{0\}$  that is compatible with the group structure of $G$, such that the following conditions hold:
\begin{enumerate}
\item $E$ and $E'$ are both positive.
\item For all $1\leq i<j\leq n$, we have $E_i\prec  E_j$,
 $E'_i\prec  E'_j$,
and $|E_i|>|E_j|$.
\item $\max E \preceq \max E'$.
\end{enumerate}
Then $M$ is matched to $N$.
\end{theorem}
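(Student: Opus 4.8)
The plan is to reduce matchability to a bipartite perfect‑matching problem and then verify Hall's condition by a counting argument driven by the strictly decreasing block sizes. Write $E := E(M)$ and $E' := E(N)$; since both matroids have rank $n$ with exactly $n$ blocks, every $E_i$ and $E'_i$ is nonempty. I would fix an arbitrary basis $\mathcal{M}=\{a_1,\dots,a_n\}$ of $M$. As a basis of a transversal matroid it is a transversal, so after relabeling $a_i\in E_i$ for each $i$, and by condition (2) this forces $a_1\prec\cdots\prec a_n$. The key observation is that matching $\mathcal{M}$ to some basis of $N$ through some $\pi\in S_n$ is equivalent to choosing a bijection $\sigma\colon[n]\to[n]$ together with representatives $c_i\in E'_{\sigma(i)}$ satisfying $a_i+c_i\notin E$ for all $i$: then $\{c_i\}$ is a transversal, i.e.\ a basis $\mathcal{N}$ of $N$, and $\pi=\sigma$ does the job.

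Next I would encode this as a bipartite graph $H$ with parts $[n]$ (the positions $i$) and $[n]$ (the blocks $j$), placing an edge $i\sim j$ exactly when there exists $c\in E'_j$ with $a_i+c\notin E$; equivalently, a non‑edge $(i,j)$ means $a_i+E'_j\subseteq E$. A perfect matching of $H$ supplies the desired $\sigma$ and representatives, so it suffices to verify Hall's condition for $H$. Let $\Gamma(S)$ denote the neighborhood of $S$ in $H$.

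The main obstacle is the verification of Hall's condition, and the decisive trick is to test a failed condition against the single largest index of $S$. Suppose $|\Gamma(S)|<|S|$ for some set $S$ of positions, and set $T=[n]\setminus\Gamma(S)$, so that $|T|>n-|S|$ and $a_i+E'_j\subseteq E$ for every $i\in S$ and $j\in T$. Put $i_0:=\max S$. By positivity (condition (1)) each sum $a_{i_0}+c$ with $c\in E'_j$ is strictly larger than $a_{i_0}$, and all these sums lie in $E$; since $a_{i_0}\in E_{i_0}$ and the $E$‑blocks are ordered, the elements of $E$ exceeding $a_{i_0}$ lie in $E_{i_0}\cup\cdots\cup E_n$, of which at most $|E_{i_0}|-1$ come from $E_{i_0}$. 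Counting the $\sum_{j\in T}|E'_j|=\sum_{j\in T}|E_j|$ distinct sums then yields $\sum_{j\in T}|E_j|\le\sum_{k\ge i_0}|E_k|-1$.

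To close the argument I would invoke the strictly decreasing sizes: from $\max S=i_0$ we get $|S|\le i_0$, hence $|T|>n-|S|\ge n-i_0$, i.e.\ $|T|\ge n-i_0+1=|\{i_0,\dots,n\}|$. Because $|E_1|>\cdots>|E_n|$, the blocks of largest index are exactly those of smallest size, so any $T$ of this cardinality satisfies $\sum_{j\in T}|E_j|\ge\sum_{k\ge i_0}|E_k|$, contradicting the inequality just obtained. Thus Hall's condition holds, $H$ has a perfect matching, and $\mathcal{M}$ is matched to a basis of $N$; as $\mathcal{M}$ was arbitrary, $M$ is matched to $N$. I expect the localization of the sums into the top blocks via the choice $i_0=\max S$ to be the delicate point that makes positivity and the decreasing sizes combine into a contradiction; the ordering of the $E'_i$ and the hypothesis $\max E\preceq\max E'$ appear to enter mainly through guaranteeing the sorted bases.
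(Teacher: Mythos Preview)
Your argument is correct and follows the same core strategy as the paper: take the largest basis element $a_{i_0}$ under consideration, use positivity to force the sums $a_{i_0}+c$ into $E_{i_0}\cup\cdots\cup E_n$, and then let the strictly decreasing block sizes produce a counting contradiction. The paper packages this as a greedy construction (pick $b_1,b_2,\dots$ one at a time and derive a contradiction if step $j{+}1$ fails), while you package it via Hall's condition on a bipartite position/block graph; these two framings are interchangeable here.

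There is one substantive difference worth noting. In your count you observe that the sums strictly exceed $a_{i_0}$, so they land in a set of size at most $\sum_{k\ge i_0}|E_k|-1$; this ``$-1$'' gives an immediate contradiction with $\sum_{j\in T}|E_j|\ge\sum_{k\ge i_0}|E_k|$. The paper's count omits this $-1$, obtaining only equality $\{i_1,\dots,i_j\}=\{1,\dots,j\}$, and then needs the additional facts $x'=\max E'\in E'_n$ (from $E'_i\prec E'_j$) and $\max E\preceq\max E'$ to finish. Your argument therefore never uses hypothesis~(3) nor the ordering $E'_i\prec E'_j$ from~(2); in effect you have proved a slightly stronger statement than the theorem as written. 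Your closing remark that these hypotheses ``enter mainly through guaranteeing the sorted bases'' is off --- the sorted form of $\mathcal{M}$ comes from $E_i\prec E_j$ alone --- but this does not affect the validity of the proof.
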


By a similar proof,  Theorem \ref{partition, asy 1}  also holds if we replace conditions (1)-(3) by their negative counterparts:
\begin{enumerate}
\item
$E$ and $E'$ are both negative.
\item
For all $1\leq i<j\leq n$, we have 
 $E_i\prec  E_j$,
$E'_i\prec  E'_j$,
and $|E_i|<|E_j|$.
\item
$\min E' \preceq \min E$.
\end{enumerate}
\medskip

We  use Theorem \ref{partition, asy 1} to prove:

\begin{theorem}\label{asymmetric, transversal matroid}
  Let $M=(E, \mathcal{I})$ and $N=(E', \mathcal{I'})$ be two transversal matroids over $G$, with partition sets $\{E_i\}_{i\in [n]}, \{E'_i\}_{i\in [n]}$, respectively, so that $|E_i|=|E'_i|$ for all $i$. Assume further that there exists a total order $\preceq$ on $E\cup E'\cup (E+E')\cup\{0\}$  that is compatible with the group structure of $G$, so that the following properties hold:
\begin{enumerate}
\item
There exists $k\in [n]$ 
such that 
\begin{enumerate}[(a)]
    \item $E_i, E_i'$ are  negative when  $i<k$ and  positive otherwise. 
    \item $E_k=-E_k'$. That is, $E_k=\{-a: a\in E_k'\}$.
    \item $|E_i|>|E_j|$ whenever $k<i<j$ or  $ j<i<k$. 
\end{enumerate}
\item
For all $i<j$, $E_i\prec E_j$ and $E'_i\prec E'_j$.
\item $\max E \preceq \max E'$, and $\min E' \preceq \min E$.
\end{enumerate}
Then $M$ is matched to $N$.
Furthermore, if  $k=1$  then we can remove the condition $\min E' \preceq \min E$, and if $k=n$ then we can remove the condition $\max E \preceq \max E'$. 
\end{theorem}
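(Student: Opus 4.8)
The plan is to exploit the sign structure around the pivot index $k$ to reduce the mixed-sign problem to two single-sign instances of Theorem \ref{partition, asy 1} plus a trivial pivot match. Write $[n]=L\sqcup\{k\}\sqcup R$ with $L=\{1,\dots,k-1\}$ (the negative indices) and $R=\{k+1,\dots,n\}$ (the positive indices), and let $M^-,N^-$ (resp. $M^+,N^+$) be the transversal sub-matroids of $M,N$ with partition sets $\{E_i\}_{i\in L},\{E'_i\}_{i\in L}$ (resp. $\{E_i\}_{i\in R},\{E'_i\}_{i\in R}$). Given an arbitrary basis $\mathcal M=\{m_1,\dots,m_n\}$ of $M$ with $m_i\in E_i$, its restrictions to $L$ and to $R$ are bases of $M^-$ and $M^+$; I will match each of these separately, match the single pivot element $m_k$ by hand, and glue the three permutations into one $\pi\in S_n$ and the three partial choices into a single basis $\mathcal N$ of $N$.

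First I would run Theorem \ref{partition, asy 1} on $(M^+,N^+)$ and its negative counterpart on $(M^-,N^-)$. The hypotheses transfer directly: condition (2) gives $E_i\prec E_j$ and $E'_i\prec E'_j$ inside each block; condition (1)(a) makes $E^+:=\bigcup_{i\in R}E_i$ and its $N$-counterpart positive and makes the $L$-sets negative; condition (1)(c) yields $|E_i|>|E_j|$ for $i<j$ in $R$ and $|E_i|<|E_j|$ for $i<j$ in $L$, which are exactly the monotonicity hypotheses of the positive and negative versions; and $|E_i|=|E'_i|$ holds throughout. Since $E_n$ (resp. $E_1$) is the $\prec$-largest (resp. smallest) part, $\max E=\max E_n=\max E^+$ and $\min E=\min E_1=\min E^-$, so condition (3), namely $\max E\preceq\max E'$ and $\min E'\preceq\min E$, supplies precisely the $\max/\min$ comparison each block instance needs. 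This produces permutations $\pi_R$ of $R$ and $\pi_L$ of $L$. Here I use that the matching produced by the proof of Theorem \ref{partition, asy 1} places every sum strictly above $\max E^+=\max E$ in the positive block and, symmetrically, strictly below $\min E^-=\min E$ in the negative block; hence these sums avoid \emph{all} of $E$, not merely the block's own ground set.

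For the pivot I first observe that $0\notin E$. Indeed, every non-pivot part is sign-pure by (1)(a) and so omits $0$, while $E_k=-E'_k$ together with $0\notin E'$ forces $0\notin E_k$ (if $0\in E_k$ then $0\in E'_k\subset E'$). Now $m_k\in E_k=-E'_k$ means $-m_k\in E'_k$, so I may set $m'_k:=-m_k$ and match $k$ to itself; the resulting sum is $m_k+m'_k=0\notin E$. Assembling $\pi:=\pi_L\cup\{k\mapsto k\}\cup\pi_R\in S_n$ and the basis $\mathcal N$ of $N$ whose $j$-th coordinate is the element of $E'_j$ selected by the block matchings (for $j\in L\cup R$) or $-m_k$ (for $j=k$), every sum $m_i+m'_{\pi(i)}$ lies above $\max E$, below $\min E$, or equals $0$, and in all cases avoids $E$. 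As $\mathcal M$ was arbitrary, $M$ is matched to $N$. The furthermore clause is immediate: if $k=1$ then $L=\varnothing$, no negative block occurs and the hypothesis $\min E'\preceq\min E$ is never invoked; symmetrically $k=n$ makes $R=\varnothing$ and drops $\max E\preceq\max E'$.

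The main obstacle is the interface between the blocks: a single application of Theorem \ref{partition, asy 1} only guarantees that the block sums avoid the block's ground set, whereas here I need them to avoid the \emph{entire} $E$, which also contains the opposite-sign parts and the pivot part. This is what forces me to use the quantitative form of the matching (sums clearing $\max E$ or undercutting $\min E$) rather than the bare conclusion; conceptually the pivot is the delicate point, but the reflection $E_k=-E'_k$ dissolves it by producing the sum $0$, which $0\notin E$ renders harmless.
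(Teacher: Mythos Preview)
Your decomposition into the negative block $L$, the pivot $\{k\}$, and the positive block $R$, together with the pivot assignment $m_k\mapsto -m_k$ and the observation $0\notin E$, is exactly the paper's approach. The gap is in your interface argument. You assert that ``the matching produced by the proof of Theorem~\ref{partition, asy 1} places every sum strictly above $\max E^+$''; this is not true and nothing in that proof yields it. The inductive construction there only guarantees $a_i+b_i\notin E(M^+)$, and the sum can perfectly well fall into a gap of $E^+$ well below $\max E^+$ (e.g.\ take $E^+=\{1,2,100\}$ with parts $\{1,2\},\{100\}$: the pair $a_1=1$, $b_1=2$ gives $a_1+b_1=3\notin E^+$ but $3\prec 100$). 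So the ``quantitative form'' you invoke does not exist, and your conclusion that every sum clears $\max E$ or undercuts $\min E$ is unsupported.

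The repair is easier than what you attempted and is what the paper does: you do not need the sums to exceed $\max E$, only to avoid $E^-\cup E_k$. For $i\in R$ you have $a_i\in E_i$ with $E_k\prec E_i$ by condition~(2), and $b_i\succ 0$, hence $a_i+b_i\succ a_i\succ E_k$ and $a_i+b_i\succ 0\succ E^-$. Combined with $a_i+b_i\notin E^+$ from Theorem~\ref{partition, asy 1}, this gives $a_i+b_i\notin E$. The negative block is symmetric. With this correction your proof coincides with the paper's.
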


Theorems \ref{symmetric matching} and \ref{matching property}  were proved using Hall's marriage theorem \cite{Hall} together with a result due to Kneser (see \cite[Theorem 4.3]{Nathanson}) known as Kneser's additive theorem. A matroidal generalization of Hall's theorem was proved by Rado in \cite{Rado}, where a necessary and sufficient condition for the existence of independent transversals in matroids is given. Rado's theorem plays a  role in our proofs. Various matroidal analogues of Kneser's additive theorem appear in the literature \cite{Schrijver, DeVos}, but none of them are applicable in our setting. We fill this gap to some extent by using the group version of Kneser's theorem as well as results in group theory that enable us to equip a group  $G$  with a total order that preserves the group's additive properties. 
In the cases where such order does not exist we use a rectification principle due to Lev \cite{Lev}.
A corollary due to Eliahou and Lecouvey \cite{Eliahou 1} of Kemperman's additive theorem \cite{Kemperman}, is also invoked in our proofs.

\section{Preliminaries}\label{sec:prelim}

\subsection{Notions in matroid theory} In this section we give further definitions and state a few results from matroid theory. For a wider introduction, we refer the reader to \cite{Oxley}.

The rank function defined in Section \ref{Matching in matroids} satisfies certain nice properties. 
\begin{proposition}\label{rank func}
Let $M=(E,\mathcal{I})$ be a matroid, and $X,Y\subset E$. Then $|X|\geq r(X)$ and $r(X)+r(Y) \geq r(X\cup Y) + r(X\cap Y).$ 
\end{proposition}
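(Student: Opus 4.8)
The statement to prove is Proposition~\ref{rank func}: the two basic properties of the rank function of a matroid, namely $|X| \geq r(X)$ for all $X \subseteq E$, and submodularity $r(X) + r(Y) \geq r(X \cup Y) + r(X \cap Y)$.

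The first inequality $|X| \geq r(X)$ is immediate from the definition $r(X) = \max\{|X \cap I| : I \in \mathcal{I}\}$, since every set of the form $X \cap I$ is a subset of $X$, and hence has cardinality at most $|X|$.

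For submodularity, here is the plan. The standard approach is to choose a witnessing independent set for the intersection and then extend it, using the augmentation property, to witnesses for the individual sets. Concretely, I would first pick an independent set $I$ with $I \subseteq X \cap Y$ and $|I| = r(X \cap Y)$; such an $I$ exists because any maximizing independent set $J$ in the definition of $r(X\cap Y)$ can be replaced by $J \cap (X\cap Y)$, which is still independent (by the hereditary property) and lies inside $X \cap Y$. Next I would extend $I$ to a maximal independent subset $I_X$ of $X$ (so $|I_X| = r(X)$, and $I \subseteq I_X \subseteq X$) by repeatedly applying the augmentation property; similarly extend $I$ to a maximal independent subset $I_Y$ of $Y$ with $|I_Y| = r(Y)$. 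The key observation is that $I_X \cup I_Y$ is a subset of $X \cup Y$, so $r(X \cup Y) \geq |I_X \cup I_Y|$ \emph{provided} $I_X \cup I_Y$ is independent — but in general it need not be, so I cannot use it directly; instead I bound $r(X\cup Y)$ from above differently.

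I would reorganize as follows to avoid that gap. Take $I_X$ a maximum independent subset of $X$ with $I \subseteq I_X$ as above. Then extend $I_X$ to a maximum independent subset $B$ of $X \cup Y$ via augmentation, so $I_X \subseteq B \subseteq X \cup Y$ and $|B| = r(X\cup Y)$. Now consider $B \cap Y$: it is independent (heredity) and contained in $Y$, so $|B \cap Y| \leq r(Y)$. The decisive counting step uses that $B \cap (X \cap Y) \supseteq I$, combined with the inclusion–exclusion identity $|B| = |B \cap X| + |B \cap Y| - |B \cap X \cap Y|$ together with $B \cap X \supseteq I_X$ giving $|B\cap X| \geq r(X)$ while the newly added elements of $B \setminus I_X$ all lie in $Y$. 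The cleanest route is: since $I_X \subseteq B$ and $I_X$ is maximum in $X$, every element of $B \setminus I_X$ lies in $Y \setminus X$, so $B \cap X = I_X$ and hence $|B \cap X| = r(X)$; then $r(X\cup Y) = |B| = |B\cap X| + |B \setminus X| = r(X) + |B \cap (Y\setminus X)|$. Meanwhile $B \cap Y$ is independent in $Y$ with $B \cap (X\cap Y) \supseteq I$, so $|B\cap Y| \leq r(Y)$ and $|B \cap Y| = |B\cap(X\cap Y)| + |B \cap (Y\setminus X)| \geq r(X\cap Y) + |B\cap(Y\setminus X)|$ once one checks $|B\cap(X\cap Y)| \geq r(X\cap Y)$; assembling these yields $r(X) + r(Y) \geq |B| + r(X\cap Y) = r(X\cup Y) + r(X\cap Y)$, as desired.

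The main obstacle is the standard subtlety that one cannot simply union together two maximum independent sets and expect independence; the proof must route everything through a single independent set $B$ that is maximum in $X \cup Y$ and track how its elements distribute across $X$, $Y$, and $X\cap Y$ using heredity and the maximality of $I_X$ inside $X$. The only arithmetic involved is an inclusion–exclusion count on the fixed finite set $B$, so once the set-up is correct the conclusion follows by rearranging one identity and two inequalities.
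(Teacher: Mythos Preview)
The paper does not actually prove Proposition~\ref{rank func}; it is stated as a standard fact about matroid rank functions (with a reference to \cite{Oxley} implicit in the surrounding discussion), and is used immediately afterward to derive Proposition~\ref{submodularity}. So there is no proof in the paper to compare against.

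Your argument is correct and is the standard one. A couple of small points worth tightening: when you say ``once one checks $|B\cap(X\cap Y)| \geq r(X\cap Y)$,'' this is exactly where the choice of $I$ at the start pays off --- since $I\subseteq I_X\subseteq B$ and $I\subseteq X\cap Y$, you have $I\subseteq B\cap(X\cap Y)$, giving the bound. In fact the reverse inequality also holds (because $B\cap(X\cap Y)$ is independent inside $X\cap Y$), so this is an equality, though you only need the one direction. The key structural step, that $B\cap X=I_X$ (equivalently $B\setminus I_X\subseteq Y\setminus X$), is justified correctly: any $b\in(B\cap X)\setminus I_X$ would make $I_X\cup\{b\}$ an independent subset of $X$ strictly larger than $I_X$, contradicting $|I_X|=r(X)$. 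With those two points made explicit, the final assembly $r(X)+r(Y)\geq r(X)+|B\cap(Y\setminus X)|+r(X\cap Y)=|B|+r(X\cap Y)=r(X\cup Y)+r(X\cap Y)$ is clean.
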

The  inequality in the proposition is frequently referred to as   {\em the submodular
inequality for the rank}.
Using it we can bound the rank of a complement $X^c=E\setminus X$ of a set $X\subset E$.

\begin{proposition}\label{submodularity}
Assume that $r(M)=n$, and $r(X)\leq m$, for some $1\leq m \leq n$. Then $r(X^c)\geq n-m$.
\end{proposition}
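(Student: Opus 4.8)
The statement to prove is Proposition \ref{submodularity}: if $r(M)=n$ and $r(X)\leq m$ for some $1\leq m\leq n$, then $r(X^c)\geq n-m$.

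The plan is to apply the submodular inequality from Proposition \ref{rank func} to the sets $X$ and $X^c = E\setminus X$. Since $X\cup X^c = E$ and $X\cap X^c = \emptyset$, the inequality
\[
r(X)+r(X^c)\geq r(X\cup X^c)+r(X\cap X^c)
\]
becomes
\[
r(X)+r(X^c)\geq r(E)+r(\emptyset).
\]
Now $r(E)=r(M)=n$ by definition of the rank of a matroid, and $r(\emptyset)=0$ because the empty set is independent and contributes nothing to any intersection. Hence $r(X)+r(X^c)\geq n$, and rearranging gives $r(X^c)\geq n-r(X)$. Finally, using the hypothesis $r(X)\leq m$ we obtain $r(X^c)\geq n-r(X)\geq n-m$, which is exactly the claim.

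The proof is essentially a one-line application of submodularity, so there is no genuine obstacle; the only things to verify carefully are the two boundary evaluations $r(E)=n$ and $r(\emptyset)=0$. The former is just the definition of $r(M)$, and the latter follows since $\emptyset\in\mathcal{I}$ makes $\emptyset$ independent of rank $0$. One should note that the monotonicity step $n-r(X)\geq n-m$ uses $r(X)\leq m$ in the direction that weakens the bound, which is consistent with the desired lower bound on $r(X^c)$. No appeal to the specific hypothesis $1\leq m\leq n$ is needed beyond ensuring the quantity $n-m$ is a sensible (nonnegative) lower bound.
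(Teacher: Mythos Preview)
Your proof is correct and follows exactly the same approach as the paper: apply the submodular inequality to $X$ and $X^c$, use $X\cup X^c=E$, $X\cap X^c=\emptyset$, and then bound $r(X)$ by $m$.
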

\begin{proof} We have
 $       n=r(M)=r(X\cup X^c)\leq r(X)+r(X^c)-r(X\cap X^c)
        =r(X)+r(X^c)
\leq m+r(X^c).$
\end{proof}

A {{\em circuit}} in a matroid $M$ is a minimal dependent set. That is, a dependent set whose proper subsets are all independent.
A {{\em flat}} in  $M$ is a set $F \subset E(M)$ with the property that adjoining any new element to $F$ strictly increases its rank. A flat of rank $r(M)-1$ is called a {{\em hyperplane}}. A set $H\subset E(M)$ is called a {{\em circuit-hyperplane}} of $M$ if it is at the same time a circuit and a hyperplane. Note that if $M$ is a matroid of rank $n$,  $H$  is a circuit-hyperplane of $M$ and $x\in E(M)\setminus H$, then $r(H\cup \{x\})=n$.\\

An element that forms a single-element circuit of $M$ is called a {\it{loop}}. Equivalently, an element is a loop if it belongs to no basis \cite{Oxley}. 
An element that belongs to no circuit is called a {\it{coloop}}. Equivalently, an element is a coloop if it belongs to every basis \cite{Oxley}. We say that $M$ is  {\it{loopless}} ({\it{coloopless}}) if it does not have a loop (coloop).
A matroid is called {\it{loopless-coloopless}} if it is loopless and coloopless. 
In this
paper, we will assume that all matroids are loopless; this is not a restrictive
assumption because in general one can just remove from $E$ elements that do
not belong to any independent set.

Back to paving matroids, recall that a matroid $M=(E,\I)$ of rank $n$ is said to be  a  paving matroid if all its circuits are of size at least $n$, that is, if every $(n-1)$-subset of $E$ is independent. We mentioned before that sparse paving matroids are a specific subfamily of paving matroids.

An alternative characterization of sparse paving matroids was proved in  \cite{Bonin}.
\begin{theorem}\cite{Bonin}\label{sparse paving alternative}
A matroid $M$ of rank $n$ is a sparse paving matroid if and only if every $n$-subset of $E(M)$ is either a basis or a circuit-hyperplane.
\end{theorem}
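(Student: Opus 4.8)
The plan is to prove both implications by first translating the condition ``$M^*$ is paving'' into a statement purely about hyperplanes of $M$. Recall that the circuits of $M^*$ are exactly the cocircuits of $M$, which are the complements of the hyperplanes of $M$; since $r(M^*)=|E|-n$, the requirement that $M^*$ be paving (all its circuits of size at least $|E|-n$) is equivalent to demanding that every hyperplane $H$ of $M$ satisfy $|H|\le n$. I would also use two elementary facts throughout: that $M$ being paving forces every proper subset of size at most $n-1$ to be independent (so any dependent $n$-set has rank exactly $n-1$), and the equal-rank flat lemma, namely that a flat contained in a flat of the same rank must coincide with it (if $z\in F_2\setminus F_1$ with $F_1$ a flat, then $r(F_1\cup\{z\})=r(F_1)+1\le r(F_2)$, forcing $r(F_1)<r(F_2)$).

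For the forward direction, assume $M$ is sparse paving and let $X$ be an $n$-subset of $E$. If $X$ is independent it is a maximal independent set, hence a basis. Otherwise $X$ is dependent; since $M$ is paving, every $(n-1)$-subset of $X$ is independent, so $r(X)\ge n-1$, and dependence forces $r(X)=n-1$. Paving also makes every proper subset of $X$ independent, so $X$ is a minimal dependent set, i.e.\ a circuit. To see $X$ is a hyperplane, I would pass to its closure $F$: this is a flat with $r(F)=r(X)=n-1$, hence a hyperplane, and $X\subseteq F$. Invoking that $M^*$ is paving gives $|F|\le n$, and combined with $|X|=n$ this yields $F=X$, so $X$ is itself a hyperplane, completing the circuit-hyperplane conclusion.

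For the converse, assume every $n$-subset is a basis or a circuit-hyperplane. To show $M$ is paving, suppose for contradiction some circuit $C$ has $|C|=c\le n-1$. Picking $x\in C$, I would extend the independent set $C\setminus\{x\}$ to an independent set $I$ of size $n-1$ (possible as $r(M)=n$), note $x\notin I$, and consider the $n$-set $I\cup\{x\}\supseteq C$; it is dependent, hence by hypothesis a circuit, but then it cannot properly contain the dependent set $C$, a contradiction since $c<n$. To show $M^*$ is paving, by the reformulation above it suffices to prove every hyperplane $H$ has $|H|\le n$. If instead $|H|\ge n+1$, take a basis $B$ of $H$ (size $n-1$) and an element $y\in H\setminus B$; then $B\cup\{y\}$ is an $n$-set of rank at most $n-1$, hence dependent, so by hypothesis a circuit-hyperplane, in particular a hyperplane. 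Since $B\cup\{y\}\subseteq H$ and both are flats of rank $n-1$, the equal-rank flat lemma gives $B\cup\{y\}=H$, contradicting $|H|\ge n+1>n=|B\cup\{y\}|$.

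The step I expect to be the main obstacle is the duality bookkeeping: correctly identifying cocircuits with complements of hyperplanes and translating the paving condition on $M^*$ into the clean size bound $|H|\le n$ on hyperplanes of $M$. Once that translation is secured, both directions reduce to the equal-rank flat lemma together with the basic paving facts, and the remaining steps are routine rank estimates.
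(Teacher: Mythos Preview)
Your proof is correct and complete. The paper itself does not prove this statement; it merely cites Bonin's paper \cite{Bonin} and uses the characterization as a black box. So there is nothing to compare against in the paper's own treatment.

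Your approach is the standard one: the key observation is the duality translation you identify, namely that $M^*$ is paving if and only if every hyperplane of $M$ has at most $n$ elements (since cocircuits are complements of hyperplanes and $r(M^*)=|E|-n$). Once that reformulation is in hand, both directions follow cleanly from the equal-rank flat lemma and elementary rank arguments, exactly as you outline. The step you flagged as the potential obstacle---the duality bookkeeping---is indeed the only place where one must be careful, and you have handled it correctly.
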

In the following theorem \cite{Ferroni}, an upper bound for the number of circuit-hyperplanes of a sparse paving matroid is given.
\begin{theorem}\label{counting hyperplanes}
Let $M$ be a sparse paving matroid of rank $n$ having $m$ elements.
Then, the number of circuit-hyperplanes $\lambda$ of $M$ satisfies:
\begin{equation*}
    \lambda\leq {m\choose n}\min \{\frac{1}{n+1}, \frac{1}{m-n+1}\}.
\end{equation*}
\end{theorem}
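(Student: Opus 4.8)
The plan is to reduce the statement to a double-counting argument resting on a single structural fact: in a sparse paving matroid the circuit-hyperplanes are pairwise ``far apart''. Concretely, I would first establish the key lemma that any two distinct circuit-hyperplanes $H_1,H_2$ of $M$ satisfy $|H_1\cap H_2|\le n-2$. Granting this, the two bounds inside the minimum fall out of counting the $(n-1)$-subsets and the $(n+1)$-supersets attached to the circuit-hyperplanes, respectively.

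For the key lemma, suppose instead that $|H_1\cap H_2|=n-1$ (the value $n$ is impossible since the two are distinct $n$-sets). Write $S=H_1\cap H_2$, so $|S|=n-1$, $H_1=S\cup\{x\}$ and $H_2=S\cup\{y\}$ with $x\ne y$ and $x,y\notin S$. Since $M$ is paving, $S$ is independent and $r(S)=n-1$; since $H_1,H_2$ are circuits of size $n$, we have $r(H_1)=r(H_2)=n-1$. Applying the submodular inequality of Proposition \ref{rank func} to $H_1$ and $H_2$ gives $r(S\cup\{x,y\})\le r(H_1)+r(H_2)-r(S)=n-1$. On the other hand $H_1$ is a hyperplane, i.e.\ a flat of rank $n-1$, and $y\notin H_1$, so adjoining $y$ to $H_1$ must strictly increase the rank, forcing $r(S\cup\{x,y\})=r(H_1\cup\{y\})\ge n$. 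This contradiction proves $|H_1\cap H_2|\le n-2$.

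With the lemma in hand I would run two double counts. First, each circuit-hyperplane contains exactly $n$ subsets of size $n-1$, and by the lemma no $(n-1)$-set lies in two distinct circuit-hyperplanes; hence $\lambda\cdot n\le\binom{m}{n-1}$, which via $\binom{m}{n-1}=\binom{m}{n}\frac{n}{m-n+1}$ rearranges to $\lambda\le\binom{m}{n}\frac{1}{m-n+1}$. Second, each circuit-hyperplane lies in exactly $m-n$ subsets of size $n+1$, and no $(n+1)$-set can contain two distinct circuit-hyperplanes (two $n$-subsets of an $(n+1)$-set meet in at least $n-1$ elements, contradicting the lemma); hence $\lambda\cdot(m-n)\le\binom{m}{n+1}$, which via $\binom{m}{n+1}=\binom{m}{n}\frac{m-n}{n+1}$ rearranges to $\lambda\le\binom{m}{n}\frac{1}{n+1}$. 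Taking the smaller of the two upper bounds yields the claimed inequality. Alternatively, the second bound is the image of the first under matroid duality: $H\mapsto E\setminus H$ is a bijection between the circuit-hyperplanes of $M$ and those of $M^*$, and $M^*$ has rank $m-n$, so the first bound applied to $M^*$ reads $\binom{m}{m-n}\frac{1}{n+1}=\binom{m}{n}\frac{1}{n+1}$.

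The only nonroutine step is the key lemma; everything else is elementary counting together with the two binomial identities above. I therefore expect the main obstacle to be verifying the intersection bound cleanly from the matroid axioms, which is exactly where the sparse paving hypothesis (equivalently, Theorem \ref{sparse paving alternative}) is used: it is the combination of ``circuit'' (controlling $r(H_i)$) and ``hyperplane/flat'' (forcing the strict rank increase) that produces the contradiction.
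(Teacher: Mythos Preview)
The paper does not actually prove this theorem; it is quoted from \cite{Ferroni} without proof. There is therefore no ``paper's own proof'' to compare against.

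That said, your argument is correct and is essentially the standard one. Two small remarks. First, your key lemma is already available in the paper as Theorem~\ref{(n-1) partitions}, which asserts (for any paving matroid of rank $n\ge 2$) that the hyperplanes form an $(n-1)$-partition of $E(M)$; in particular any two distinct circuit-hyperplanes meet in at most $n-2$ elements. So you need not reprove it from submodularity, though your derivation is fine. Second, in the second double count you implicitly assume $m>n$; when $m=n$ the matroid is free and $\lambda=0$, so the inequality holds trivially, but it is worth saying so since you divide by $m-n$.
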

It is conjectured in \cite{Crapo} that asymptotically, almost all matroids are sparse paving. That is, sparse paving
matroids predominate in any asymptotic enumeration of matroids. In pursuit of this conjecture, Pendavingh and van der Pol \cite{Pendavingh} showed that logarithmically almost all matroids are sparse paving matroids. This verifies that although sparse paving matroids seem to be very restrictive axiomatically, they form a  large class of matroids.




A $d$-partition of a set $E$ is a collection $\mathcal{S}$ of subsets of $E$, all of which of size at least $d$, so that the intersection of any two sets in $\mathcal{S}$  is of size at most $d-1$. The set $\mathcal{S} = \{E\}$ is a $d$-partition for any $d$, which we call a trivial
$d$-partition.
The connection between paving matroids and d-partitions is given by the following result whose proof can be found in \cite[Proposition 2.1.24]{Oxley}.

\begin{theorem} \label{(n-1) partitions}
If $M$ is a paving matroid of rank $n\geq 2$, then its hyperplanes form
a non-trivial $(n-1)$-partition of $E(M)$. That is, the intersection of any two hyperplanes in $M$ is of size at most $n-2$.
\end{theorem}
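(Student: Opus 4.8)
The plan is to check each clause of the definition of a non-trivial $(n-1)$-partition separately, treating the bound on pairwise intersections as the main point and using the submodular inequality of Proposition~\ref{rank func} as the key tool. Write $n = r(M) \geq 2$, and recall that a hyperplane is by definition a flat of rank $n-1$.

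First I would dispatch the size and non-triviality conditions, which in fact hold in any matroid. Since a hyperplane $H$ has rank $n-1$, it contains an independent set of size $n-1$, so $|H| \geq n-1$; thus every hyperplane meets the size requirement $d = n-1$. For non-triviality, note that $r(H) = n-1 < n = r(E)$ forces $H \neq E$ for every hyperplane $H$, while the family of hyperplanes is nonempty (the closure of any $n-1$ elements of a basis is a hyperplane). Hence the collection of hyperplanes is not the trivial partition $\{E\}$.

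The substance of the theorem is the intersection bound, and here I would argue as follows. Fix two distinct hyperplanes $H_1, H_2$. Since they are flats of equal rank $n-1$, neither can contain the other: were $H_1 \subsetneq H_2$, adjoining to $H_1$ an element $x \in H_2 \setminus H_1$ would strictly increase its rank because $H_1$ is a flat, giving $r(H_2) \geq r(H_1 \cup \{x\}) = n$, contrary to $r(H_2) = n-1$. Consequently $H_2 \setminus H_1 \neq \emptyset$, and choosing $x$ there yields $r(H_1 \cup \{x\}) = n$, so $r(H_1 \cup H_2) = n$. Substituting into the submodular inequality
\[
r(H_1) + r(H_2) \geq r(H_1 \cup H_2) + r(H_1 \cap H_2)
\]
gives $r(H_1 \cap H_2) \leq (n-1) + (n-1) - n = n-2$.

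It remains to convert this rank bound into the desired size bound, which is the only step that uses the paving hypothesis. Suppose, for contradiction, that $|H_1 \cap H_2| \geq n-1$. Then $H_1 \cap H_2$ contains a subset of size exactly $n-1$, which is independent because $M$ is paving, so $r(H_1 \cap H_2) \geq n-1$, contradicting $r(H_1 \cap H_2) \leq n-2$. Therefore $|H_1 \cap H_2| \leq n-2$, as required. I expect no serious obstacle: the whole argument rests on pairing the submodular upper bound on the rank of the intersection against the paving-derived lower bound coming from its cardinality, and the only delicate points are the elementary flat-theoretic observations that distinct hyperplanes are incomparable and that their union has full rank $n$.
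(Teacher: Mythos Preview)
Your proof is correct. The paper does not supply its own proof of this statement; it simply cites \cite[Proposition~2.1.24]{Oxley}. Your argument---bounding $r(H_1 \cap H_2)$ via submodularity and then invoking the paving hypothesis to convert the rank bound into a cardinality bound---is exactly the standard proof one finds in Oxley, so there is nothing to compare.
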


A matroid $M$ is called {\em free} if $r(M)=|E(M)|$. Note that a free matroid has only one basis $E(M)$. 

For integers $0\leq n \leq m$, let $E$ be an $m$-element set and let $\mathcal{I}$ be the collection of subsets of $E$ with at most $n$ elements. Then $M=(E, \mathcal{I}) $ is a matroid with rank $n$, called the {\em uniform matroid} and denoted by  $U_{n,m}$. In $U_{n,m}$ every set of size $n$ is a basis. Note that uniform matroids are sparse paving.

Let a set $E$ be partitioned into disjoint sets $E_1,...,E_l$ and let 
$$\mathcal{I}=\{X\subset E:|X\cap E_i|\leq k_i, \text{ for all}\ i=1,...,l\},$$
where $k_1,...,k_l$ are given non-negative integers, called {{\em partition parameters}}. Then $M=(E, \mathcal{I})$ is a matroid called a {{\em partition matroid}}. Note that the rank function of $M$ satisfies 
$r(X)=\sum_{i=1}^\ell \min(|E_i\cap X|,k_i)$
for all $X\subset E$. A partition matroid where $k_i=1$ for all $i$ is called a {\em transversal matroid}.

\subsection{Rado's theorem} Let $E$ be a finite set and let $\mathcal{F}=(F_1,\ldots,F_n)$ be a family of subsets of $E$. A {\em transversal}  of $\F$ is a tuple of distinct elements $(x_1,\ldots,x_n)$  satisfying $x_i\in F_i$, for all $i=1,\ldots,n$. 
 Let $M$ be a matroid over the ground set $E$ with rank function $r$. We say that  $\mathcal{F}$ admits an {\em independent transversal} if there exists a transversal $(x_1,\ldots,x_n)$ of $\F$, so that  $\{x_1,\dots,x_n\}$ is independent in $M$.

The following theorem of Rado \cite{Rado} gives a necessary and sufficient condition for the existence of independent transversals in $\F$.

\begin{theorem}\cite{Rado}\label{transversal, Rado}
Let $E$ be a finite set and $\mathcal{F}=(F_1,\ldots,F_n)$ be a family of subsets of $E$. Let $M$ be a matroid over ground set $E$ with rank function $r$. Then  $\mathcal{F}$ admits an independent transversal  if and only if for every $J\subset [n]$,  
$$r\left(\bigcup_{i\in J}F_i\right)\geq|J|.
$$

\end{theorem}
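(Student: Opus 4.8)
The easy direction is the necessity of the stated condition: if $(x_1,\dots,x_n)$ is an independent transversal, then for any $J\subseteq[n]$ the set $\{x_i:i\in J\}$ is an independent subset of $\bigcup_{i\in J}F_i$ of cardinality $|J|$, so $r\!\left(\bigcup_{i\in J}F_i\right)\ge|J|$. The substance is the converse, and the plan is to prove it by induction on $n$, splitting into two cases according to whether the rank inequality is strict on all proper nonempty subfamilies or is tight on some proper nonempty subfamily. Throughout I would use the standard contraction facts: for an independent set $T$ in $M$, the contraction $M/T$ has rank function $r_{M/T}(X)=r(X\cup T)-|T|$, a set $X$ is independent in $M/T$ if and only if $X\cup T$ is independent in $M$, and in particular every element of $T$ is a loop of $M/T$.

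In the first case, suppose $r\!\left(\bigcup_{i\in J}F_i\right)\ge|J|+1$ for every nonempty proper $J\subsetneq[n]$. Taking $J=\{n\}$ gives $r(F_n)\ge1$, so I can choose a non-loop $x_n\in F_n$ and contract it. For the reduced family $(F_1,\dots,F_{n-1})$ inside $M/x_n$ and any nonempty $J\subseteq[n-1]$, I would check that $r_{M/x_n}\!\left(\bigcup_{i\in J}F_i\right)=r\!\left(\bigcup_{i\in J}F_i\cup\{x_n\}\right)-1\ge r\!\left(\bigcup_{i\in J}F_i\right)-1\ge|J|$, where the strict surplus hypothesis is exactly what is needed and monotonicity supplies the middle inequality. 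By induction $(F_1,\dots,F_{n-1})$ then has an independent transversal in $M/x_n$, whose values together with $x_n$ form an independent transversal of $\mathcal{F}$ in $M$; distinctness from $x_n$ is automatic because $x_n$ is a loop of $M/x_n$.

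In the second case, suppose some nonempty proper $J_0\subsetneq[n]$ is tight, $r\!\left(\bigcup_{i\in J_0}F_i\right)=|J_0|$. By induction the subfamily $(F_i)_{i\in J_0}$ has an independent transversal whose value set $T$ satisfies $|T|=|J_0|=r\!\left(\bigcup_{i\in J_0}F_i\right)$, so $T$ spans $\bigcup_{i\in J_0}F_i$. I would then pass to $M/T$ and consider $(F_i)_{i\in[n]\setminus J_0}$: for $K\subseteq[n]\setminus J_0$, the spanning property of $T$ yields $r\!\left(\bigcup_{i\in K}F_i\cup T\right)=r\!\left(\bigcup_{i\in K\cup J_0}F_i\right)\ge|K|+|J_0|$ by the original hypothesis, whence $r_{M/T}\!\left(\bigcup_{i\in K}F_i\right)\ge|K|$. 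By induction $(F_i)_{i\notin J_0}$ has an independent transversal in $M/T$ with value set $T'$; then $T\cup T'$ is independent in $M$, and since the elements of $T$ are loops of $M/T$ we get $T\cap T'=\varnothing$, so $T\cup T'$ is a system of $n$ distinct representatives that is independent in $M$.

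I expect the main obstacle to be this second case: one must verify that tightness of $J_0$ lets the rank condition descend to the contraction $M/T$, which is precisely where the spanning identity $r\!\left(\bigcup_{i\in K}F_i\cup T\right)=r\!\left(\bigcup_{i\in K\cup J_0}F_i\right)$ enters, and then reassemble the two partial independent transversals into a single one with $n$ distinct representatives. The delicate bookkeeping is checking distinctness and independence of $T\cup T'$, but it follows cleanly from the loop characterization of contracted elements and the contraction identity for independence.
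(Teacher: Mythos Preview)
Your proof is correct and follows the standard inductive argument for Rado's theorem. Note, however, that the paper does not supply its own proof of this statement: Theorem~\ref{transversal, Rado} is quoted from Rado's original paper \cite{Rado} and used as a black box (specifically in the proof of Proposition~\ref{rank criteria}), so there is no in-paper proof to compare against. Your argument---induction on $n$, splitting into the ``strict surplus'' case (contract a non-loop from $F_n$) and the ``tight subfamily'' case (apply induction to a tight $J_0$, then to its complement in the contraction by the resulting transversal)---is the classical proof. One minor omission: you should state the base case explicitly (for $n=1$, the hypothesis $r(F_1)\ge1$ yields a non-loop $x_1\in F_1$), since in Case~1 with $n=1$ there are no nonempty proper $J\subsetneq[1]$ and the case distinction is vacuous.
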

\subsection{Rank criteria for matchable bases}

 In the following proposition, we use Rado's theorem to
give criteria for the existence of matchings between two matroids in terms of the rank of a suitable set. 

\begin{proposition}\label{rank criteria} 
Let $G$ be an abelian group and $M$, $N$ be two matroids over $G$ of the same rank $n$.
Let $\mathcal{M}=\{a_1,...,a_n\}$ be a basis for $M$. Assume that for every $J\subset [n]$,   $$r_N\left(\bigcap_{i\in J}\left((-a_i+E(M))\cap E(N)\right)\right)\leq n-|J|.
$$ Then $\mathcal{M}$ is matched to some basis of $N$.
\end{proposition}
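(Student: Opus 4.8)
The plan is to deduce the existence of a matching from Rado's theorem (Theorem \ref{transversal, Rado}) applied to a carefully chosen family of subsets of the ground set $E(N)$. Recall that $\mathcal{M}=\{a_1,\dots,a_n\}$ is matched to a basis of $N$ precisely when we can find a permutation $\pi\in S_n$ and a basis $\{b_1,\dots,b_n\}$ of $N$ so that $a_i+b_{\pi(i)}\notin E(M)$ for all $i$. The key translation is that the condition $a_i+b\notin E(M)$ is equivalent to $b\notin -a_i+E(M)$. So if I define, for each $i\in[n]$, the set
\[
F_i=E(N)\setminus\bigl((-a_i+E(M))\cap E(N)\bigr)=\{b\in E(N): a_i+b\notin E(M)\},
\]
then choosing a transversal $(b_1,\dots,b_n)$ of $\mathcal{F}=(F_1,\dots,F_n)$ that is independent in $N$ gives exactly a basis of $N$ (since it is independent of size $n=r(N)$, hence a basis) together with the matching condition $a_i+b_i\notin E(M)$. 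Thus $\mathcal{M}$ is matched to this basis through the identity permutation.

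First I would set up this family $\mathcal{F}$ and record the equivalence above, so that the goal becomes: show $\mathcal{F}$ admits an independent transversal in $N$. By Rado's theorem this holds if and only if $r_N\!\left(\bigcup_{i\in J}F_i\right)\geq |J|$ for every $J\subset[n]$. So the entire proof reduces to verifying this rank inequality, and the work is to convert the hypothesis (which is phrased in terms of ranks of \emph{intersections} of the sets $(-a_i+E(M))\cap E(N)$) into a statement about the rank of a \emph{union} of complements.

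The central computation uses De Morgan's law together with submodularity of the rank (Proposition \ref{submodularity}). Writing $D_i=(-a_i+E(M))\cap E(N)$, we have $F_i=E(N)\setminus D_i$, and
\[
\bigcup_{i\in J}F_i=E(N)\setminus\bigcap_{i\in J}D_i=\Bigl(\bigcap_{i\in J}D_i\Bigr)^{c},
\]
the complement taken inside $E(N)$. The hypothesis gives $r_N\!\left(\bigcap_{i\in J}D_i\right)\leq n-|J|$. Applying Proposition \ref{submodularity} with the matroid $N$ (whose rank is $n$), the set $X=\bigcap_{i\in J}D_i$, and the bound $m=n-|J|$, I obtain
\[
r_N\Bigl(\bigl(\textstyle\bigcap_{i\in J}D_i\bigr)^{c}\Bigr)\geq n-(n-|J|)=|J|,
\]
which is exactly the Rado condition $r_N\!\left(\bigcup_{i\in J}F_i\right)\geq |J|$. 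Once this is established for all $J$, Rado's theorem yields the independent transversal, and the translation above completes the proof.

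I do not expect a serious obstacle here; the argument is essentially a dualization of Rado's criterion. The one point to handle carefully is the boundary case $J=\emptyset$, where the intersection $\bigcap_{i\in\emptyset}D_i$ should be read as all of $E(N)$ and the inequality $r_N(E(N))=n\le n$ holds trivially, matching $|J|=0\le n$; it is worth stating explicitly which ambient set the complement is taken in so that Proposition \ref{submodularity} applies with the correct total rank $n=r(N)$. The only genuinely substantive step is ensuring that an independent transversal of size $n$ in a rank-$n$ matroid is automatically a basis, which follows immediately since a maximal-size independent set is a basis.
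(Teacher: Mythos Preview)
Your proof is correct and follows essentially the same approach as the paper: define $F_i=E(N)\setminus\bigl((-a_i+E(M))\cap E(N)\bigr)$, use De Morgan together with Proposition~\ref{submodularity} to turn the hypothesis into the Rado condition $r_N\bigl(\bigcup_{i\in J}F_i\bigr)\ge|J|$, and then extract an independent transversal, which is a basis of $N$ matched to $\mathcal{M}$ via the identity permutation. Your treatment is slightly more careful (you make the De Morgan step and the $J=\emptyset$ case explicit), but the argument is the same.
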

\begin{proof}
For $J\subset [n]$, set $N_J=\bigcap_{i\in J}\left((-a_i+E(M))\cap E(N)\right).$ Combining  the conditions of the proposition with Proposition \ref{submodularity}, we obtain 
$r_N\left(N_J^c\right)\geq |J|,$  where  complements  are  taken in $E(N)$.  For $i\in [n]$, define 
$$F_i=\left((-a_i+E(M))\cap E(N)\right)^c.$$
Observe that 
$\bigcup_{i\in J} F_i = N_J^c$, and therefore we have
$r_N(\bigcup_{i\in J} F_i)\geq |J|.$ 

Thus, by Theorem \ref{transversal, Rado},   the family  $(F_1,\ldots,F_n)$ has a transversal $(b_1,\ldots,b_n)$ that is independent in $N$. Since $r(N)=n$,  $\mathcal{N}=\{b_1,\ldots,b_n\}$ is a basis for $N$. Furthermore, $b_i\in\left((-a_i+E(M))\cap E(N)\right)^c$, entailing $a_i+b_i\notin E(M)$. Thus $\mathcal{M}$ is matched to $\mathcal{N}$.
\end{proof}

\subsection{Total orders in abelian groups}\label{sec:total order} 
The existence of a total order that is compatible with the group operation will be an essential building block in our proofs.

Given an abelian group $G$ and a subset $A\subset G$, a total order on $A$ is {\em compatible with the group structure}, if for every $a, b, c \in A$, $a\preceq b$ implies $a+c \preceq b+c$. Sometimes we use the notation $a \prec b$ if $a \preceq b$ and $a\neq b$.\\

 Let $A\subset G$. Assuming that $\preceq$ is a total order on $A\cup\{0\}$:
 \begin{itemize}
     \item An element $x \in A$ is {\it positive} if $0\prec x$. $x$ is {\it negative} if $x \prec 0$.
     \item $A$ is called {\em positive (negative)} if every  $x \in A$ is positive (negative). In this case, we write $0 \prec A$ ($A \prec 0$).
     
 \end{itemize}
      For subsets $A,B$ of $G$, we say that $B$ is greater than $A$ ($A\preceq B$) if $x\preceq y$ for any $x\in A$ and $y\in B$. If $B=\{b\}$ and $A\preceq B$, we simply write $A\preceq b$. We say $B$ is strictly greater than $A$ ($A\prec B$) if $a\prec b$, for every $a\in a$ and $b\in B$. We say If $A$ is a finite subset of $G$, we say that $x\in A$ is the {{\em maximum}} ({{\em minimum}}) element of $A$ if $a\preceq x$ ($x\preceq a$) for every $a\in A$, and we denote it by $\text{max}(A)$ ($\text{min}(A)$).

In \cite{Levi} Levi proved the following:
\begin{theorem}\cite{Levi} \label{linearly ordered groups}
An abelian group $G$ has a total order that is compatible with the group structure if and only if it is torsion-free.
\end{theorem}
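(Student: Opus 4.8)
The plan is to treat the two implications separately, with the forward direction a short contradiction argument and the reverse direction the substantive part, which requires an explicit construction of the order.

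\textbf{The forward direction.} Suppose $G$ carries a total order $\preceq$ compatible with the group structure, and suppose toward a contradiction that $G$ has a nonzero torsion element $g$, say $ng = 0$ with $n \geq 1$ and $g \neq 0$. By trichotomy either $0 \prec g$ or $g \prec 0$. If $0 \prec g$, then adding $g$ to both sides repeatedly and invoking compatibility yields $0 \prec g \prec 2g \prec \cdots \prec ng$; but $ng = 0$, so transitivity gives $0 \prec 0$, a contradiction. If instead $g \prec 0$, compatibility (adding $-g$) shows $0 \prec -g$, and $-g$ is again a nonzero torsion element, reducing to the previous case. Hence $G$ is torsion-free.

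\textbf{The reverse direction: embedding into a $\mathbb{Q}$-vector space.} Assume $G$ is torsion-free. First I would enlarge $G$ to a rational vector space: the canonical map $G \to V := G \otimes_{\mathbb{Z}} \mathbb{Q}$, $g \mapsto g \otimes 1$, has kernel exactly the torsion subgroup of $G$, so it is injective, and $V$ is naturally a vector space over $\mathbb{Q}$. Since the restriction of a compatible total order on $V$ to any subgroup is again a compatible total order, it suffices to order $V$ and then restrict to the image of $G$ (identified with $G$).

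\textbf{Ordering the vector space.} Using Zorn's lemma, fix a Hamel basis $\{e_\alpha\}_{\alpha \in I}$ of $V$ over $\mathbb{Q}$ together with a well-ordering of the index set $I$. Each $v \in V$ has a unique expansion $v = \sum_{\alpha \in I} c_\alpha(v)\, e_\alpha$ with finitely many nonzero coefficients $c_\alpha(v) \in \mathbb{Q}$, and the coefficient maps $c_\alpha$ are additive. For $v \neq 0$ let $\alpha(v)$ be the least index with $c_{\alpha(v)}(v) \neq 0$, and declare $v \succ 0$ iff $c_{\alpha(v)}(v) > 0$ in $\mathbb{Q}$; then set $u \prec v$ iff $v - u \succ 0$. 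The positive elements are closed under addition (comparing least nonzero indices), and every nonzero $v$ satisfies exactly one of $v \succ 0$ or $-v \succ 0$, so $\preceq$ is a transitive total order. Compatibility is immediate, since $(v+w)-(u+w) = v-u$ forces $u \prec v$ to be equivalent to $u+w \prec v+w$. Restricting $\preceq$ to $G \subset V$ gives the desired order.

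\textbf{Main obstacle.} The only genuine content lies in the reverse direction, and its crux is the embedding of a torsion-free $G$ into a $\mathbb{Q}$-vector space: this is precisely where torsion-freeness enters, since the kernel of $g \mapsto g \otimes 1$ is the torsion subgroup. The lexicographic construction itself is routine but relies essentially on the axiom of choice, both for the existence of a Hamel basis and for the well-ordering of $I$, which is unavoidable for arbitrary $G$.
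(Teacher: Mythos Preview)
Your proof is correct. However, the paper does not actually prove this theorem: it is stated with a citation to Levi \cite{Levi} and used as a black box, so there is no ``paper's own proof'' to compare against. Your argument---the easy contradiction for the forward direction, and for the converse the embedding $G \hookrightarrow G \otimes_{\mathbb Z} \mathbb Q$ followed by a lexicographic order on a well-ordered Hamel basis---is one of the standard proofs of Levi's theorem and is entirely sound, including your correct identification of where the axiom of choice enters.
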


When $G$  is not torsion-free, one may equip small enough subsets of $G$ with a total ordering that preserves certain additive properties of $G$. This is possible by utilizing the {\it rectifiability} in abelian groups, which we describe next.

Given an integer $t>0$, a map $\varphi:A\to S$ from a subset $A$ of a semigroup to a (potentially
different) semigroup $S$ is called a {\em Freiman isomorphism of order $t$} if the equality
$$
a_1+...+a_t=b_1+...+b_t
$$
holds if and only if 
$$
\varphi(a_1)+...+\varphi(a_t)=\varphi(b_1)+...+\varphi(b_t)
$$
holds, for every choice of $a_i, b_j \in A$, $1\leq i, j \leq t$. 
In this case we say that $A$ is {\em Freiman isomorphic of order $t$} to $\varphi(A)$.
We say that $A$ is {\em $t$-rectifiable} if it is Freiman isomorphic of order $t$ to a set of integers.
The following is proved in \cite{Lev}.
\begin{theorem}\cite{Lev}\label{$t$-rectifiable subsets}
Let $G$ be an abelian group with the non-trivial torsion subgroup, and let $t\geq 2$ be an integer. Let $p=p(G)$. Then any subset $A\subset G$ with $|A|\leq \lceil{\log_t p\rceil}$ is $t$-rectifiable, and the Freiman isomorphism induces a compatible order in $A$.
Moreover, there exists a non-$t$-rectifiable subset $A\subset G$ with $|A|=\lceil{\log_t p\rceil}+1$.
\end{theorem}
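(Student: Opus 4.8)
The plan is to reduce the statement to a linear-algebra question over $\mathbb{Z}$ about the lattice of short additive relations among the elements of $A$, and to control that lattice using the single arithmetic fact supplied by the torsion hypothesis: \emph{every nonzero torsion element of $G$ has order at least $p$}, since such an element generates a nonzero finite cyclic subgroup whose order is $\geq p(G)=p$. First I would record that $|A|\le\lceil\log_t p\rceil$ is equivalent to $t^{\,k-1}<p$, where $k=|A|$, because $\lceil\log_t p\rceil$ is the least integer $m$ with $t^m\ge p$. Writing $A=\{a_1,\dots,a_k\}$, I set $H=\langle A\rangle\cong\mathbb{Z}^r\oplus T$ with $T$ finite, let $\pi:\mathbb{Z}^k\to H$ be the homomorphism $e_i\mapsto a_i$, and let $L=\ker\pi$ be the lattice of relations. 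The point is that an identity $\sum_{\ell=1}^t a_{i_\ell}=\sum_{\ell=1}^t a_{j_\ell}$ is, after collecting multiplicities, exactly a relation $\pi(c)=0$ with $c$ in the finite symmetric set
\[
D_t:=\Big\{c\in\mathbb{Z}^k:\ \textstyle\sum_i c_i=0,\ \sum_i|c_i|\le 2t\Big\},
\]
and conversely every $c\in D_t$ arises this way (split $c$ into its positive and negative parts and pad both sides up to $t$ terms). Hence a map $a_i\mapsto x_i\in\mathbb{Z}$ is a Freiman isomorphism of order $t$ onto $\{x_1,\dots,x_k\}$ precisely when the integer functional $\phi=(x_1,\dots,x_k)$ satisfies $\phi(c)=0\iff\pi(c)=0$ for every $c\in D_t$; distinctness of the $x_i$ is then automatic, as $e_i-e_j\in D_t$ with $\pi(e_i-e_j)=a_i-a_j\neq0$ forces $x_i\neq x_j$.

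The construction of $\phi$ I would carry out as follows. Put $W=\operatorname{span}_{\mathbb{Q}}(L\cap D_t)\subseteq\mathbb{Q}^k$ and look for $\phi\in W^{\perp}\cap\mathbb{Z}^k$; such a $\phi$ automatically vanishes on $L\cap D_t$. Since $W\subseteq\{\sum_i c_i=0\}$ we have $\dim W\le k-1$, so $W^\perp\neq0$, and since $D_t\setminus L$ is finite, a lattice vector in $W^\perp$ avoiding the finitely many proper subspaces $W^\perp\cap\{c\}^\perp$ (for $c\in D_t\setminus L$) will satisfy $\phi(c)\neq0$ for all those $c$, giving the desired functional. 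Such a vector exists by the usual "a $\mathbb{Q}$-vector space is not a finite union of proper subspaces" argument, \emph{provided} no non-relation of $D_t$ already lies in $W$, i.e. provided the condition
\[
W\cap(D_t\setminus L)=\varnothing,
\]
which I will call $(\star)$. Conversely, if some $c\in D_t\setminus L$ lay in $W$, every $\phi\in W^\perp$ would kill it and rectification would be impossible. So the entire positive direction reduces to establishing $(\star)$.

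The heart of the matter, and the step I expect to be the main obstacle, is proving $(\star)$ with the sharp constant. Suppose $c\in D_t$ lies in $W=\operatorname{span}_{\mathbb{Q}}(L\cap D_t)$. Choosing a $\mathbb{Q}$-basis of $W$ among the generators in $L\cap D_t$ and clearing denominators by Cramer's rule, there is a positive integer $m$ with $mc\in\mathbb{Z}\text{-span}(L\cap D_t)\subseteq L$, hence $m\,\pi(c)=\pi(mc)=0$. Because all vectors involved lie in the hyperplane $\sum_i c_i=0$ and have entries bounded by $t$ in absolute value, the relevant minors are $(k-1)\times(k-1)$ subdeterminants of a matrix with entries in $[-t,t]$, and one wants the sharp estimate $m\le t^{\,k-1}$. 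Granting this, the hypothesis $t^{\,k-1}<p$ gives $0<m<p$, and since a nonzero torsion element of $G$ has order $\geq p$, the relation $m\,\pi(c)=0$ forces $\pi(c)=0$, i.e. $c\in L$; this is exactly $(\star)$. The delicate part is that a crude Hadamard bound only yields $m\le(t\sqrt{k-1})^{\,k-1}$, which is too lossy to hit the threshold $\lceil\log_t p\rceil$; obtaining the clean bound $m\le t^{\,k-1}$ is precisely the quantitative content carried out in \cite{Lev}. Once $(\star)$ is in hand, $\phi$ exists, $A$ is $t$-rectifiable, and transporting the order of $\mathbb{Z}$ through $a_i\mapsto x_i$ yields a total order on $A$ under which all comparisons of sums of at most $t$ elements agree with those in $\mathbb{Z}$ — the asserted compatible order.

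For the sharpness clause I would fix a subgroup $C\le G$ of order $p$, identify $C\cong\mathbb{Z}/p\mathbb{Z}$, and exhibit inside it a set $A$ of size $\lceil\log_t p\rceil+1$ for which $(\star)$ fails. The natural candidate is a base-$t$ type family, where the number of bounded-weight combinations exceeds $p$, so a pigeonhole on residues modulo $p$ produces two distinct short $t$-sums that coincide in $C$ while their integer images cannot be reconciled — forcing a vector of $D_t\setminus L$ into $W$. Pinning down a configuration whose colliding combinations have $\ell_1$-length at most $2t$ (so that the collision genuinely lands in $D_t$) is the only subtle point here, and I would follow the explicit construction of \cite{Lev} for it. This simultaneously confirms that the determinant estimate $m\le t^{\,k-1}$ is what fixes the threshold exactly at $\lceil\log_t p\rceil$.
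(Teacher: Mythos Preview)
The paper does not contain a proof of this theorem: it is quoted verbatim from \cite{Lev} and used as a black box (its only role in the paper is to feed into Lemma~\ref{rectification}). There is therefore no ``paper's own proof'' to compare your proposal against.

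As a standalone sketch of Lev's result, your reduction to the condition $(\star)$ via the relation lattice and the functional $\phi\in W^{\perp}$ is a coherent framework, and the equivalence between Freiman relations of order $t$ and vectors in $D_t$ is set up correctly. However, you explicitly defer the decisive step --- the sharp bound $m\le t^{\,k-1}$ on the denominator arising from Cramer's rule --- back to \cite{Lev} itself, noting that the naive Hadamard estimate is too weak. Since that bound is exactly what fixes the threshold at $\lceil\log_t p\rceil$, your proposal is really an outline of how one might organize the argument rather than an independent proof; the substantive content remains in the cited reference. The sharpness construction is likewise only gestured at.
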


Using Theorem \ref{$t$-rectifiable subsets} we obtain the following lemma.
\begin{lemma}\label{rectification}
Let $G$ be an abelian group with the non-trivial torsion subgroup. Let $p=p(G)$ and $A$ be a subset of $G$ with $|A|< \lceil{\log_2 p\rceil}$. Then there exists a total ordering 
 $\preceq$ of $A$ that is compatible with  $G$. 
\end{lemma}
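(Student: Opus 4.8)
The plan is to apply Theorem \ref{$t$-rectifiable subsets} with $t=2$ and then pull back the standard order on the integers. Since $|A| < \lceil \log_2 p \rceil$, in particular $|A| \le \lceil \log_2 p \rceil$, so the theorem guarantees that $A$ is $2$-rectifiable, i.e.\ there is a Freiman isomorphism $\varphi\colon A \to \mathbb{Z}$ of order $2$ onto a finite set of integers. I would then define $a \preceq b$ if and only if $\varphi(a) \le \varphi(b)$ (we may assume $A \neq \emptyset$, the empty case being vacuous).

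First I would check that $\preceq$ is a genuine total order on $A$, which reduces to the injectivity of $\varphi$. Fixing any $x \in A$ and using that $a + x = b + x \iff a = b$ in $G$, the order-$2$ isomorphism property applied to the pairwise sum $a+x = b+x$ yields $\varphi(a) = \varphi(b) \iff a = b$. Hence $\varphi$ is injective and $\preceq$, being inherited from the total order $(\mathbb{Z},\le)$, is a total order on $A$.

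The main step is to verify compatibility with the group structure: for $a,b,c \in A$ with $a+c, b+c \in A$, the relation $a \preceq b$ should imply $a+c \preceq b+c$. Here I would exploit the identity $a + (b+c) = b + (a+c)$, which holds in $G$ and involves only the four elements $a, b+c, b, a+c$ of $A$. Since $\varphi$ is a Freiman isomorphism of order $2$, this equality of pairwise sums transfers to $\varphi(a) + \varphi(b+c) = \varphi(b) + \varphi(a+c)$, whence $\varphi(b+c) - \varphi(a+c) = \varphi(b) - \varphi(a) \ge 0$ by the hypothesis $\varphi(a) \le \varphi(b)$. Thus $\varphi(a+c) \le \varphi(b+c)$, which is exactly $a+c \preceq b+c$.

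The main obstacle is precisely this last transfer: a Freiman isomorphism of order $2$ a priori only preserves equalities between pairwise sums, not the order of translates, so the crux is to recast the desired inequality as a consequence of the single additive relation $a + (b+c) = b + (a+c)$ together with $\varphi(a) \le \varphi(b)$. Once this observation is in place the statement follows immediately, and the role of the hypothesis $|A| < \lceil \log_2 p \rceil$ is merely to secure the $2$-rectifiability of $A$ through Theorem \ref{$t$-rectifiable subsets}.
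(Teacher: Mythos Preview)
Your proof is correct and takes a genuinely different route from the paper's. The paper rectifies the larger set $A\cup\{0\}$ (this is why the strict inequality $|A|<\lceil\log_2 p\rceil$ is needed there), normalizes so that $\varphi(0)=0$, and then uses the Freiman relation $a+b=(a+b)+0$ to derive the additivity identity $\varphi(a+b)=\varphi(a)+\varphi(b)$; compatibility of the order then follows from ordinary additivity in $\mathbb{Z}$. You instead rectify only $A$ and bypass the auxiliary element $0$ entirely by exploiting the single relation $a+(b+c)=b+(a+c)$, which already lives inside $A$ once $a+c,b+c\in A$. Your argument is a bit more direct and in fact shows that the conclusion holds under the weaker hypothesis $|A|\le\lceil\log_2 p\rceil$; the paper's approach, on the other hand, yields the extra structural information that $\varphi$ is additive on its domain and naturally incorporates $0$ into the ordered set, which is convenient for the later applications where one needs to speak of positive and negative elements.
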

\begin{proof}
Invoking Theorem \ref{$t$-rectifiable subsets}, we conclude that there exists an injection $\varphi:A\cup{\{0\}}\to \mathbb Z$ such that for every $a, b, c, d \in A\cup{\{0\}}$, 
$$
a+b=c+d
$$
holds if and only if 
$$
\varphi(a)+\varphi(b)=\varphi(c)+\varphi(d).
$$
Replacing $\varphi$ with $\varphi-\varphi(0)$, we may assume that $\varphi(0)=0.$ Thus \begin{equation}\label{eq}
   \varphi(a)+ \varphi(b)=\varphi(a+b)+\varphi(0)=\varphi(a+b), 
\end{equation} where the first equality holds since $a+b=(a+b)+0$.

We now define a total ordering $\preceq$ on $A$ as follows: for every $a,b\in A$, $a\preceq b$ if and only if $\varphi(a) \leq \varphi(b)$. Then $\preceq$ is compatible with the group structure. Indeed, if $a, b, c$ are elements of $A$ with $a\preceq b$ then
$\varphi(a) \leq \varphi(b)$, implying that $\varphi(a)+ \varphi(c)\leq \varphi(b)+\varphi(c)$. Now by (\ref{eq}), $\varphi(a+c)\leq \varphi(b+c)$, implying $a+c\preceq b+c$, as desired.
\end{proof}
\subsection{Progressions}

Let $G$ be an abelian group, $x, a\in G$ and let $k$ be a positive integer.  A {\em progression} of length $k$ with difference $x$ and initial term $a$ is a subset of $G$ of the form
$
\{a,a+x,a+2x,...,a+(k-1)x\}.
$
A progression with difference $x$ is sometimes called an {\em{$x$-progression}}.  We say $A$ is a {\em{semi-progression}} if $A\setminus\{a\}$ is a progression, for some $a\in A$. A nonempty subset $A$ of $G$ is called a {\em Chowla subset} if
the order of every element of $A$ is greater than or equal to $|A|$+1.

We will need the following lemma for the proof of Theorem \ref{$|E(M)|=|E(N)|=n+1$}.

\begin{lemma}\label{progression}
Suppose $G$ is an abelian torsion-free group or a cyclic group
of prime order. Let $A=\{a_1,\dots,a_{n+1}\} \subsetneq G$ be a proper subset of $G$. Define $A_i=-a_i+A$, for any $i\in [n]$. If $A$ is not a progression, then $\bigcap_{i\in [n]}A_i=\{0\}$.  
\end{lemma}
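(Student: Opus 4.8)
The plan is to prove the contrapositive: assuming $\bigcap_{i\in[n]}A_i$ contains a nonzero element, I will show that $A$ must be a progression. First observe that $0\in A_i$ for every $i$, since $-a_i+a_i=0$, so $0$ always lies in the intersection; the content of the lemma is that nothing else does. Suppose then that some $g\neq 0$ belongs to $A_i=-a_i+A$ for all $i\in[n]$. This says precisely that $a_i+g\in A$ for each $i\in[n]$. Writing $T_g$ for the translation $x\mapsto x+g$, it means that $T_g$ carries each of the $n$ elements $a_1,\dots,a_n$ back into $A$; the only element of $A$ whose image under $T_g$ may fall outside $A$ is $a_{n+1}$. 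I will say that $a\in A$ \emph{escapes} if $a+g\notin A$, so the set of escaping elements is contained in $\{a_{n+1}\}$ and hence has size at most one.

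The key step is a counting identity for $T_g$ restricted to $A$. I will decompose $A$ into its maximal $g$-progressions, that is, maximal subsets of $A$ of the form $\{c,c+g,\dots,c+kg\}$ that cannot be extended within $A$ in either direction. Each such block has a unique top element $c+kg$ whose successor $c+(k+1)g$ lies outside $A$ by maximality, hence that top element escapes; conversely, every escaping element of $A$ is the top of exactly one block. Therefore the number of blocks equals the number of escaping elements, which we have just bounded by one. Since $A$ is nonempty it has at least one block, so $A$ consists of exactly one block, i.e.\ $A$ is a single $g$-progression, contradicting the hypothesis that $A$ is not a progression.

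It remains to justify that the block decomposition is well defined and genuinely linear in both settings; this is where the case split enters. When $G$ is torsion-free I will invoke Theorem \ref{linearly ordered groups} to fix a compatible total order, under which each coset of $\langle g\rangle$ is linearly ordered by its $g$-coordinate, so the maximal $g$-runs are unambiguous and at least one element escapes (indeed the maximum of $A$ does, as $g$ has infinite order and cannot permute the finite set $A$). When $G=\mathbb{Z}/p\mathbb{Z}$ with $p$ prime, $g\neq 0$ generates $G$, so all of $A$ lies in the single coset $G$; here I use the hypothesis $A\subsetneq G$ to guarantee that the maximal arcs of $A$ are proper cyclic arcs rather than the entire cycle, so the block count is again well defined and positive. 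The main obstacle is precisely this cyclic case: one must check that a single maximal cyclic arc is a progression in the sense of the definition, and that properness of $A$ rules out the degenerate situation $A=G$ (for which the conclusion genuinely fails, as then every $A_i=G$). Once this is in place, the counting identity closes the argument exactly as in the torsion-free case.
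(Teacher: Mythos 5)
Your proof is correct and is essentially the paper's argument in different language: your maximal $g$-runs are exactly the connected components of the digraph the paper builds (edges $a_i\to a_j$ when $a_j-a_i=g$), your escaping elements are its vertices of outdegree zero, and your use of torsion-freeness resp.\ $A\subsetneq\mathbb{Z}/p\mathbb{Z}$ to keep each run proper is the paper's exclusion of directed cycles via $kg\neq 0$ for $0<k\leq n+1$. The only nitpick is the parenthetical claim that $\max(A)$ escapes in the torsion-free case, which holds only when $g$ is positive in the chosen order (otherwise $\min(A)$ escapes); this is harmless, since the existence of at least one block already follows from $A\neq\emptyset$ together with each block having a well-defined top.
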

\begin{proof}
Assume for contradiction that there exists $x\in \bigcap_{i\in [n]}A_i$ with $x\neq 0$. Define $D$ to be a directed graph whose vertices are $a_1,\dots, a_{n+1}$ with an edge from $a_i$ to $a_j$ if and only if $a_j-a_i=x$. Note that the indegree $\text{deg}^-(v)$  and outdegree $\text{deg}^+(v)$ of every vertex $v$  in $D$ is at most 1. 

We claim that for $i\in [n]$, $deg^+(a_i)=1$. Indeed,  $x\in A_i=-a_i+A$, and so there exists $a_j\in A$ such that $a_j-a_i=x$. Since $x\neq 0$ we have $j\neq i$, and thus $a_ia_j$ is an edge.  

Since the indegrees and outdegree are at most 1,  each connected component of $D$ is either a directed cycle or a directed path. 
Suppose there exists a connected component $C$ of $D$ that is a directed cycle $a_{i_1}a_{i_2}\dots a_{i_k}a_{i_1}$, for $k\in [n+1]$. Then $a_{i_j}-a_{i_{j-1}}=x$ for all $j\in [k]$ (where the addition in the indices is mod $k$),  implying 
$kx=0$. But this contradicts the fact that  $G$ is torsion-free or $G=\mathbb{Z}_p$ with $n+1<p$. 

We conclude that every connected component of $D$ is a directed path. Since every directed path has a vertex with a zero outdegree, and the outdegrees of all $a_1,\dots,a_n$ are not zero, $D$ must be a directed path  $a_{i_1}a_{i_2}\dots a_{i_n}a_{i_{n+1}}$ (ending at $a_{i_{n+1}}=a_{n+1}$). But this implies  $a_{i_j}=a_{i_1} + (j-1)x$ for all $j\in [n+1]$, showing that $A$ is a progression. This is a contradiction.
\end{proof}
\subsection{Abelian additive theory} 

We will find it necessary to appeal to a few notions from additive number theory, in particular, an addition
theorem of Kneser and a consequence of Kemperman's inequality. Given subsets $A$ and $B$ of $G$ and an integer $n>0$, define the {\em sumset} of $A$ and $B$ as
$$
A+B=\{a+b: a\in A, b\in B\},
$$
and let
$$
nA=\{a_1+...+a_n\mid  a_i\in A, \text{ for } 1\leq i\leq n \}.
$$
We will use  the following  theorem of Kneser, whose proof can be
found in \cite{Nathanson}.
\begin{theorem}[{Kneser's addition theorem} \cite{Nathanson}]\label{Kneser} 
Let $G$ be an abelian group and $A$ and $B$ are nonempty finite subsets of $G$. Then there exists a subgroup $H$ of $G$ such that
\begin{enumerate}
    \item $|A+B|\geq |A|+|B|-|H|$,
    \item $A+B+H=A+B.$
\end{enumerate}
\end{theorem}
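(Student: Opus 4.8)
The plan is to recognize that the subgroup $H$ in the statement is not something to be searched for but is canonically determined by $A$ and $B$: I would set
\[
H = H(A+B) := \{g \in G : g + (A+B) = A+B\}.
\]
This is a subgroup of $G$, being the stabilizer of the set $A+B$ under the translation action (it is nonempty, closed under addition, and closed under inverses). Property (2), namely $A+B+H = A+B$, then holds by definition, so the entire content of the theorem is the cardinality bound (1).

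To prove (1) I would actually establish the formally stronger estimate $|A+B| \ge |A+H| + |B+H| - |H|$; since $|A+H| \ge |A|$ and $|B+H| \ge |B|$, this implies (1). The advantage of the stronger form is that it behaves well under passage to the quotient $G/H$: because $A+B$ is a union of $H$-cosets, its image $\overline{A+B} = \bar A + \bar B$ in $G/H$ satisfies $|A+B| = |H|\,|\bar A + \bar B|$, and after dividing by $|H|$ the inequality becomes exactly $|\bar A + \bar B| \ge |\bar A| + |\bar B| - 1$, which is the \emph{aperiodic} case in $G/H$, where the stabilizer of $\bar A + \bar B$ is trivial by construction. Thus the argument splits into (i) a quotient reduction to the case $H = \{0\}$, and (ii) proving $|A+B| \ge |A| + |B| - 1$ whenever $A+B$ has trivial stabilizer.

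For step (ii) the tool is the Dyson $e$-transform: for $e \in G$ put $A(e) = A \cup (B+e)$ and $B(e) = B \cap (A-e)$. A short computation shows $|A(e)| + |B(e)| = |A| + |B|$ and $A(e) + B(e) \subseteq A+B$, while choosing $e$ of the form $a-b$ with $a \in A$, $b \in B$ forces $b \in B(e)$, so $B(e) \ne \emptyset$. I would then induct on $|B|$: in the base case $|B| = 1$ the sumset is a translate of $A$ and the bound holds with equality, and in the inductive step I would pick $e$ so that $\emptyset \ne B(e) \subsetneq B$, apply the inductive hypothesis to $(A(e), B(e))$, and transport the bound back via $A(e)+B(e) \subseteq A+B$ together with $|A(e)|+|B(e)| = |A|+|B|$.

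The main obstacle is precisely the bookkeeping of the stabilizer through this induction: the transform can produce a sumset $A(e)+B(e)$ that is periodic even when $A+B$ is aperiodic, and one must also dispose of the degenerate situation in which no admissible $e$ yields a proper nonempty $B(e)$, which pins down a structural periodicity. This is where the quotient reduction of step (i) is re-invoked inside the induction: one quotients by the newly appearing period, applies the inductive hypothesis in the smaller group, and lifts, while verifying that the assumed triviality of the period of $A+B$ is not violated. Managing this interaction between the $e$-transform and the period — arranging that the induction strictly decreases a well-chosen parameter while never enlarging the stabilizer beyond what the hypothesis allows — is the delicate heart of the argument; the surrounding cardinality manipulations are routine. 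The full details follow \cite{Nathanson}.
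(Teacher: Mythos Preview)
The paper does not supply its own proof of this statement: Theorem~\ref{Kneser} is quoted from \cite{Nathanson} as a tool, with the remark that the proof ``can be found in \cite{Nathanson}.'' Your proposal is a faithful outline of the standard proof given there --- take $H$ to be the stabilizer of $A+B$, reduce modulo $H$ to the aperiodic case, and run the Dyson $e$-transform induction on $|B|$ --- and you correctly identify the one genuinely delicate point, namely controlling the period of $A(e)+B(e)$ through the induction. So there is nothing to compare: your sketch matches the source the paper defers to.
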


The following classical theorem was proved by Kemperman \cite{Kemperman}.
\begin{lemma} \cite{Kemperman}\label{Kemperman's theorem}
Let $A$ and $B$ be nonempty finite subsets of a group $G$. Assume there exists an element $c\in A+B$ appearing exactly once as a sum
$c = a+b$ with $a\in A, b\in B$.   Then 
\begin{align*}
|A+B|\geq|A|+|B|-1.
\end{align*}
\end{lemma}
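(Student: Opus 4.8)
The plan is to derive the inequality from Kneser's theorem (Theorem \ref{Kneser}). Since the paper's group $G$ is always abelian, I would prove the abelian case, which is all that is used later. First I would normalize by translation. Write the distinguished sum as $c=a_0+b_0$ with $(a_0,b_0)$ the unique representing pair, and replace $A$ by $A-a_0$ and $B$ by $B-b_0$; this changes none of $|A|$, $|B|$, $|A+B|$, and moves the special sum to $0$. After the reduction we have $0\in A$, $0\in B$, and the unique-representation hypothesis becomes
\[
A\cap(-B)=\{0\},
\]
i.e. the only pair $(a,b)\in A\times B$ with $a+b=0$ is $(0,0)$. The goal is now $|A+B|\ge|A|+|B|-1$.

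Next I would pass to the stabilizer. Let $H=\{g\in G:\ g+A+B=A+B\}$, a subgroup for which $A+B$ is a union of $H$-cosets. Let $\pi\colon G\to G/H$ be the quotient map and set $\bar X=\pi(X)$. Because $H$ is the \emph{full} stabilizer, the image $\bar A+\bar B=\overline{A+B}$ has trivial stabilizer in $G/H$. Applying Theorem \ref{Kneser} to $\bar A,\bar B$ produces a subgroup $K\le G/H$ with $\bar A+\bar B+K=\bar A+\bar B$ and $|\bar A+\bar B|\ge|\bar A|+|\bar B|-|K|$; but $K$ stabilizes $\bar A+\bar B$, hence $K$ is trivial, so with $s=|\bar A|$ and $t=|\bar B|$ (the numbers of $H$-cosets met by $A$ and $B$) we get $|\bar A+\bar B|\ge s+t-1$. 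Since $A+B$ is $H$-periodic, $|A+B|=|H|\,|\bar A+\bar B|\ge(s+t-1)|H|$.

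It then remains to bound $|A|+|B|$ by $(s+t-1)|H|+1$. Counting $A$ one coset at a time, each of its $s$ cosets contributes at most $|H|$ and the coset $H$ itself contributes exactly $|A\cap H|$, so $|A|\le(s-1)|H|+|A\cap H|$, and symmetrically $|B|\le(t-1)|H|+|B\cap H|$. The unique-representation hypothesis enters only here: from $A\cap(-B)=\{0\}$ one reads off $(A\cap H)\cap\bigl(-(B\cap H)\bigr)=\{0\}$, so two subsets of $H$ meeting only in $0$ give, by inclusion–exclusion inside $H$,
\[
|A\cap H|+|B\cap H|\le|H|+1.
\]
Adding the two coset bounds yields $|A|+|B|\le(s+t-2)|H|+(|H|+1)=(s+t-1)|H|+1\le|A+B|+1$, which is exactly the desired inequality.

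I expect the only real obstacle to be the second step. Theorem \ref{Kneser} as stated merely gives $|A+B|\ge|A|+|B|-|H|$ for \emph{some} periodic subgroup $H$, and when $|H|\ge2$ this is strictly weaker than what is needed. The device that repairs this is to quotient by the full stabilizer before invoking Kneser, so that the theorem becomes sharp (the stabilizing subgroup it returns in $G/H$ is forced to be trivial). Once that sharp form is in hand, the rest is elementary bookkeeping, and the unique sum is used through nothing more than the estimate $|A\cap H|+|B\cap H|\le|H|+1$.
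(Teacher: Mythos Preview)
The paper does not supply its own proof of this lemma; it is quoted from Kemperman \cite{Kemperman} and used as a black box (only its corollary, Lemma~\ref{Kemperman's consequences}, is proved in the paper). So there is no in-paper argument to compare against.

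Your argument is correct for abelian $G$, which, as you note, is the only setting the paper needs. The key steps all check out: after translating so that $0$ is the uniquely represented element, the condition becomes $A\cap(-B)=\{0\}$; passing to $G/H$ with $H$ the full stabilizer of $A+B$ forces Kneser's auxiliary subgroup to be trivial, giving $|\overline{A+B}|\ge s+t-1$ and hence $|A+B|\ge(s+t-1)|H|$; and since $0\in A\cap B$ the coset $H$ is one of the $s$ (respectively $t$) cosets meeting $A$ (respectively $B$), so the bounds $|A|\le(s-1)|H|+|A\cap H|$ and $|B|\le(t-1)|H|+|B\cap H|$ are legitimate. The inclusion--exclusion step $|A\cap H|+|B\cap H|\le|H|+1$ follows from $(A\cap H)\cap(-(B\cap H))=\{0\}$ inside $H$. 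Combining these yields $|A|+|B|-1\le(s+t-1)|H|\le|A+B|$, as desired.

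This is a clean and self-contained derivation from Theorem~\ref{Kneser}; it is worth remarking that Kemperman's original result holds for arbitrary (not necessarily abelian) groups, where Kneser's theorem is unavailable and a different argument is required, but that generality is irrelevant here.
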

The following was essentially proved by  Eliahou-Lecouvey \cite{Eliahou 1} and it also follows from Lemma \ref{Kemperman's theorem}. We give the proof here for completion
\begin{lemma} \cite{Eliahou 1}\label{Kemperman's consequences}
Let $A$ and $B$ be nonempty finite subsets of a group $G$ such that $A$,  $B$ and $A+B$ are all contained in a subset $X$ of $G\setminus\{0\}$. Then 
\begin{align*}
|X|\geq|A|+|B|+1.
\end{align*}
\end{lemma}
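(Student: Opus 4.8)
The plan is to deduce this from Kemperman's theorem (Lemma \ref{Kemperman's theorem}) by the standard device of adjoining the neutral element to both summands, so as to manufacture a uniquely represented sum. Concretely, I would set $A_0 = A \cup \{0\}$ and $B_0 = B \cup \{0\}$, and work with the sumset $A_0 + B_0 = (A+B) \cup A \cup B \cup \{0\}$. The whole point of enlarging $A$ and $B$ this way is that the three pieces $A+B$, $A$, $B$ are already controlled by the hypothesis, while the extra element $0$ is what will let me invoke Kemperman.

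First I would record the consequences of the hypothesis $A \cup B \cup (A+B) \subseteq X \subseteq G \setminus \{0\}$: namely that $0 \notin A$, $0 \notin B$, and $0 \notin A+B$. This is exactly what is needed to guarantee that the element $0 \in A_0 + B_0$ has a \emph{unique} representation as a sum $a_0 + b_0$ with $a_0 \in A_0$ and $b_0 \in B_0$. Indeed, $0 = 0 + 0$ is one such representation, and any other would force either $0 \in A + B$ (if $a_0 \in A$ and $b_0 \in B$), or $0 \in A$, or $0 \in B$, all of which are excluded. With this uniquely represented element in hand, Kemperman's theorem applies to the pair $(A_0, B_0)$ and yields $|A_0 + B_0| \geq |A_0| + |B_0| - 1 = (|A|+1) + (|B|+1) - 1 = |A| + |B| + 1$.

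It remains to transfer this lower bound on $|A_0 + B_0|$ into a lower bound on $|X|$, and this is the step I expect to be the most delicate. Since $A$, $B$, and $A+B$ are all contained in $X$, every element of $A_0 + B_0$ except possibly the single point $0$ lies in $X$; that is, $A_0 + B_0 \subseteq X \cup \{0\}$. The main obstacle is the careful bookkeeping of this stray element $0 \notin X$, because it is precisely this accounting that governs the exact additive constant in the conclusion, and so it is the point I would verify most scrupulously. Combining the containment $(A+B) \cup A \cup B \subseteq X$ with the Kemperman estimate above is what is meant to produce the stated bound $|X| \geq |A| + |B| + 1$.
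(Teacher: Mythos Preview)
Your approach is exactly the one the paper takes: adjoin $0$ to each summand, observe that $0$ is uniquely represented in $A_0+B_0$, and invoke Lemma~\ref{Kemperman's theorem}. You are in fact more careful than the paper on one point: you correctly record that $A_0+B_0=(A+B)\cup A\cup B\cup\{0\}$, whereas the paper's proof writes $A_0+B_0=(A+B)\cup A\cup B$, silently dropping the $\{0\}$.

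That extra care, however, exposes a genuine gap at the step you flagged as delicate. From $A_0+B_0\subseteq X\cup\{0\}$ together with $0\notin X$ and $0\in A_0+B_0$ one gets $|A_0+B_0|\le |X|+1$; combined with the Kemperman estimate $|A_0+B_0|\ge |A|+|B|+1$ this yields only
\[
|X|\ \ge\ |A|+|B|,
\]
one short of the stated conclusion. This deficit is not repairable by tidier bookkeeping: the lemma as stated is actually false, as $G=\mathbb{Z}$, $A=B=\{1\}$, $X=\{1,2\}$ shows (here $|X|=2<3=|A|+|B|+1$). The paper's omission of $\{0\}$ from $A_0+B_0$ is precisely what makes its arithmetic appear to close; once the $\{0\}$ is restored, the method delivers $|X|\ge |A|+|B|$ and no more. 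So your instinct that the accounting of the stray $0$ governs the additive constant was correct---it is exactly where both the argument and the statement fail.
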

\begin{proof}
Define $A_0=A\cup\{0\}$ and $B_0=B\cup\{0\}$. Then $0\in A_0+B_0$ and appears exactly once as a sum in $A_0+B_0$. That is, if $0=a+b$ with $a\in A$ and $b\in B$, since $0\notin A+B$, then $a=0$ or $b=0$, and hence $a=b=0$. Applying Theorem \ref{Kemperman's theorem} to $A_0$ and $B_0$ we have 
$$
|A_0+B_0|\geq |A_0|+|B_0|-1.
$$
Set $X=(A+B)\cup A\cup B$. Since $|A_0|=|A|+1$, $|B_0|=|B|+1$ and $A_0+B_0=(A+B)\cup A\cup B$, we have:
$$
|X|=|A_0+B_0|\geq |A|+|B|+1,
$$
as claimed.
\end{proof}
In the following lemma due to Kemperman, a certain family of critical pairs is characterized. Note that a pair $\{A,B\}$ of finite subsets of a group $G$ is called {\it{critical}} if $|G|>|A+B|=|A|+|B|-1$.
\begin{lemma} \cite{Kemperman 2}\label{Kemperman's critical theorem} Let $\{A,B\}$ be a critical pair of a finite abelian group $G$ with $|A|>1$, $|B|>1$ and $|A|+|B|-1\leq p(G)-2$. Then $A$ and $B$ are progressions with the same difference.
\end{lemma}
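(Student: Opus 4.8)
The plan is to first reduce to the case where the sumset $A+B$ is \emph{aperiodic} (has trivial stabilizer), and then to recognize the remaining statement as a Vosper-type structure theorem provable by a transform induction. Throughout, the hypothesis $|A|+|B|-1\leq p(G)-2$ is what prevents any nonzero subgroup from interfering.

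First I would apply Kneser's theorem (Theorem \ref{Kneser}) to $\{A,B\}$, taking $H$ to be the stabilizer of $A+B$, so that $A+B+H=A+B$ and $|A+B|\geq |A|+|B|-|H|$. Since $\{A,B\}$ is critical, $|A+B|=|A|+|B|-1\leq p(G)-2$. But $A+B$ is a nonempty union of cosets of $H$, so $|H|$ divides $|A+B|$ and in particular $|A+B|\geq |H|$. If $H$ were nontrivial then $|H|\geq p(G)$, forcing $|A+B|\geq p(G)$, contradicting $|A+B|\leq p(G)-2$. Hence $H=\{0\}$ and the sumset is aperiodic. This step is clean and is precisely where the size bound kills all subgroup obstructions; after it the statement reads: $A,B$ are finite subsets of an abelian group with $|A|,|B|\geq 2$, $|A+B|=|A|+|B|-1$, aperiodic sumset, and $|A+B|$ strictly below the order of every nonzero subgroup. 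When $G=\mathbb{Z}/p\mathbb{Z}$ this is exactly Vosper's theorem.

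I would then prove this by induction on $|A|+|B|$. The base case $|A|=|B|=2$ is direct: writing $A=\{a_1,a_2\}$ and $B=\{b_1,b_2\}$, the single collision forced by $|A+B|=3$ must be either $a_1+b_2=a_2+b_1$ or $a_1+b_1=a_2+b_2$, and in both cases (after possibly reindexing $B$) the sets $A$ and $B$ share a common difference. For the inductive step I would use the Dyson $e$-transform: for a suitable $e\in G$ set $A'=A\cup(B+e)$ and $B'=B\cap(A-e)$, which satisfy $A'+B'\subseteq A+B$ and $|A'|+|B'|=|A|+|B|$. Choosing $e$ so that $B'$ is a proper nonempty subset of $B$ strictly decreases $|B|$ while keeping the sumset size and the bound on $p(G)$ under control; one then feeds the transformed pair to the induction hypothesis and transports the resulting common difference back through the transform. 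Lemma \ref{Kemperman's theorem} supports the criticality bounds along the way, since in the aperiodic regime one expects an element of the sumset with a unique representation, which anchors the arithmetic-progression structure.

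The main obstacle is exactly this inductive step. One must choose the transform parameter $e$ carefully; verify that the transformed pair remains critical (the upper bound $|A'+B'|\leq |A|+|B|-1$ is easy, but matching it with the Kneser lower bound requires re-establishing aperiodicity of $A'+B'$); ensure the degenerate terminal case $|B'|=1$ is avoided or handled separately (the statement genuinely fails for singletons, which is why $|A|,|B|>1$ is assumed); and—most delicately—guarantee that the common differences produced at successive stages are mutually consistent, using $|A+B|\leq p(G)-2$ throughout to forbid the ``wraparound'' collisions that would otherwise break the common-difference conclusion.
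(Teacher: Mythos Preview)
The paper does not prove this lemma at all: it is quoted from Kemperman's 1960 paper \cite{Kemperman 2} and used as a black box, so there is no ``paper's own proof'' to compare against. Your proposal is therefore an attempt to supply a proof where the authors chose to cite one.

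Your reduction step is correct and clean: from $A+B+H=A+B$ you get that $A+B$ is a union of $H$-cosets, hence $|H|\le |A+B|=|A|+|B|-1\le p(G)-2$, forcing $H=\{0\}$. This indeed reduces the problem to a Vosper-type statement with aperiodic sumset.

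The inductive step, however, has an internal inconsistency that you should fix before going further. You announce an induction on $|A|+|B|$, and then in the very next sentence record (correctly) that the Dyson $e$-transform satisfies $|A'|+|B'|=|A|+|B|$. So the transform gives you no progress on that induction parameter. The quantity that actually decreases is $\min(|A|,|B|)$ (or simply $|B|$ after arranging $|B|\le |A|$), and that is what the classical argument inducts on. This is not a cosmetic slip: once you induct on $\min(|A|,|B|)$ you are forced to confront the terminal case $|B'|=1$, which you flag but do not handle, and in the Vosper/Kemperman setting that case carries real content (it is where one finally reads off the arithmetic-progression structure of $A+B$ and then pushes it back to $A$ and $B$). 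Likewise, ``transporting the common difference back through the transform'' is the crux of the matter, and your sketch does not indicate how you intend to do it; in the standard proofs this step is nontrivial and uses the hypothesis $|A+B|\le p(G)-2$ in an essential way (to rule out $A+B$ being a coset minus a point, etc.). As written, the proposal identifies the right toolkit but does not yet contain the idea that closes the argument.
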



\section{Examples}\label{sec:examples}
In this section, we construct two examples showing that the ``if" part of Questions \ref{matroid automatch} and  \ref{matroid match prop} is not true. 

In the symmetric case, we have:

\begin{example}\label{counterexample(symmetric)}
For $n\ge 2$, let $M$ be a transversal matroid with  ground set $E=[2n] \subset\mathbb{Z}$, and with a partition $E_i=\{i\}$ for $1\leq i \leq n-1$ and $E_n=\{n, n+1, ..., 2n\}$. Then $r(M)=n$. Consider the basis $\mathcal{M}=[n]$ of $M$. Every basis $\mathcal{N}$ of $M$ is of the form $\mathcal{N}=[n-1]\cup \{m\}$, where $n\leq m \leq 2n$. We claim that $\M$ is not matched to $\N$. Indeed, if $m<2n$ then $1+b \in E(M)$ for every $b\in \N$, violating the matching condition. So assume $m=2n$ and $1\mapsto 2n$. Then $2+b \in E(M)$ for every $b\in \N\setminus \{2n\}$, violating the matching condition.    
Thus $M$ is not matched to itself although $0\notin E(M)$.
\end{example}

For asymmetric matchings consider the following examples.  

\begin{example}\label{counterexample (asymmetric)}
For $n\ge 2$, let $M$ be a uniform matroid of rank $n$ with ground set $E(M)=[2n]\subset\mathbb{Z}$. 
Let $N$ be the transversal matroid whose  ground set is $E(N)=[2n]\subset\mathbb{Z}$, with the partition $E_i=\{i\}$ for $1\leq i \leq n-1$ and $E_n=\{n,n+1,\dots, 2n\}$. Then $r_N(N)=n$. Consider the basis $\mathcal{M}=[n]$ of $M$. 
Then $\mathcal{M}$ cannot be matched to $\mathcal{N}$ of $N$ as $\mathcal{N}$ must be of the form $\mathcal{N}=\{1,2,\dots, n-1,m\}$ where $n\leq m\leq 2n$. If $\M$ is matched to $\N$ then $n$ must be mapped to $m$. In this case, since $m$ is taken then $(n-1)+k\in [2n]$ for any $k\in [n-1]$, contradicting the definition of matchable bases in the matroid sense. It is worth mentioning that in our example here, $\mathbb{Z}$ can be replaced with $\mathbb{Z}/p\mathbb{Z}$ for some large enough prime $p$.

\end{example}


 \section{Proofs on symmetric matchings}\label{Symmetric}
We recall that all matroids in this paper are assumed to be loopless.
\begin{proof}[Proof of Proposition \ref{prop:onlyif}(1)]
Assume  that  $0\in E(M)$. Since $M$ is  loopless,  $\{0\}$ is an independent set in $M$. Use the augmentation property  to extend $\{0\}$ to a basis $\mathcal{M}=\{0,a_2,\ldots,a_n\}$ of $M$. Let $\mathcal{N}=\{b_1,b_2,\ldots,b_n\}$ be an arbitrary basis of $N$. Then $0+ b_i=b_i\in E(M)$, for all $1\leq i \leq n$. Therefore  $\mathcal{M}$ cannot be matched to $\mathcal{N}$. This shows that $M$ is not matched to itself.
\end{proof}
\begin{remark}\label{loop remark}
    Note that eliminating the condition of not having loops may lead to the failure of certain relatively straightforward matroid analogs of previous existing results. For example, in the group setting, it is proved in \cite{Losonczy} that if $A\subset G$ is matched to itself then $0\notin A$. The matroidal counterpart of this result is presented as part (1) of Proposition \ref{prop:onlyif}. It is evident that the proof is grounded in the observation that all singletons, including $\{0\}$, are independent. In particular, we extend $\{0\}$ to form a basis for $M$. Moreover, we can construct an example demonstrating part (1) of Proposition \ref{prop:onlyif} fails if we assume having loops. That is, of a matroid with a loop that is matched with itself, and $0\in E(M)$. Let $G-\mathbb{Z}_5$, and define a matroid $M=(E, \I)$ of rank $2$ over $G$ with $E=\{\overline{0},\overline{1},\overline{3}\}$ and $\I=\{\emptyset, \{\overline{1}\}, \{\overline{3}\}, \{\overline{1},\overline{3}\}\}$. Then:
\begin{itemize}
    \item The matroid $M$ has a loop $\{\overline{0}\}$.
    \item $M$ is matched to itself; $M$ has only one basis $\mathcal{M}=\{\overline{1},\overline{3}\}$, and $\mathcal{M}$ is matched with itself through $\overline{1}\rightarrow \overline{3}$ and $\overline{3}\rightarrow \overline{1}$.
\end{itemize}
Therefore, $M$ is matched with itself, even though $\overline{0}\in E(M)$.\\
\end{remark}

\begin{proof}[Proof Theorem \ref{sparse sym main}]
Let $M$ be a sparse paving matroid with  $0\notin E(M)$.  Then by Theorem \ref{symmetric matching}, $E(M)$ is matched to itself in the group sense. That is, there exists a matching $f:E(M)\to E(M)$ such that $a+ f(a) \notin E(M)$ for every $a\in E(M)$. Let $\mathcal{M}=\{a_1,\ldots,a_n\}$ be a basis for $M$. Set $b_i:=f(a_i)$, $1\leq i\leq n$. It follows from Theorem \ref{sparse paving alternative} that the $n$-subset $\mathcal{N}=\{b_1,\ldots,b_n\}$ of $E(M)$ is either a basis for $N$ or a circuit-hyperplane. 
Note that if $\mathcal{N}$ is a basis of $M$, then $\mathcal{M}$ is matched to $\mathcal{N}$ in the matroid sense, and we are done.

So assume that  $\mathcal{N}$ is a circuit-hyperplane. Observe that $\mathcal{N}\neq E(M)$, as otherwise $r(\mathcal{N})=n$, contradicting $\mathcal{N}$ being a hyperplane. 
We claim that there exist  $x\in E(M)\setminus\mathcal{N}$ and $a_i\in \mathcal{M}$ satisfying $a_i+x\notin E(M)$. Indeed, otherwise, we have 
$
\mathcal{M}+(E(M)\setminus\mathcal{N})\subset E(M).
$
Since $\mathcal{M}$, $E(M)\setminus\mathcal{N}$ and $\mathcal{M}+(E(M)\setminus\mathcal{N})$ are all non-empty and contained in $E(M)$, and $0\notin E(M)$, by Lemma \ref{Kemperman's consequences} we have
$$
|E(M)|\geq |\mathcal{M}|+|E(M)\setminus\mathcal{N} |+1
= n+ |E(M)|-n+1=|E(M)|+1,
$$
a contradiction.

Let $\mathcal{P}=\{b_1,\ldots,b_{i-1},x,b_{i+1},\ldots,b_n\} \subset E(M)$. Since $|\mathcal{P}\cap \mathcal{N}|=n-1$ and $\mathcal{N}$ is a hyperplane, it follows from Theorem \ref{(n-1) partitions} that $\mathcal{P}$ is a basis of $M$. Now $\mathcal{M}$ is matched to $\mathcal{P}$ in the matroid sense through the map $a_i\mapsto x$ and $a_j\mapsto b_j$ for $j\neq i$. Thus $M$ is matched to itself.
\end{proof}
 \section{Proofs on asymmetric matchings} \label{Asymmetric}
 
\begin{proof}[Proof Proposition \ref{prop:onlyif}(2)]
Let $G$ be a group satisfying the matroid matching property and assume for contradiction that $G$ is neither torsion-free nor cyclic of prime order. Then there exists $a \in G$ with $o(a)=n$, where $1<n<|G|$. Consider the subgroup $H= \langle a \rangle$ of $G$. Let $M$ be the free matroid with ground set $E(M)=H$.
Choose $x\in G\setminus H$.
Let $N$ be the free matroid with ground sets $E(N)=(H\setminus \{0\})\cup \{x\}$.  
Then the only bases of  $M$ and $N$ are  $\mathcal{M}=H$ and $\mathcal{N}=\{x,a,2a,...,(n-1)a\}$, respectively. If $\mathcal{M}$ is matched to $\mathcal{N}$ in the matroid sense, then  $0$ must be mapped $x$. But then $a\mapsto ia$ for some $i\in [n-1]$, and  $a+ia \in E(M)$ for all $i$. Thus $\mathcal{M}$ cannot be matched to $\mathcal{N}$, a contradiction.
\end{proof}

\subsection{Sparse paving matroids}
In this section, we prove our main theorems concerning asymmetric matchings in sparse paving matroids. We start with the proof of 
 Theorem \ref{Asy sparse paving}, for which we will need the following Lemma from \cite{Aliabadi 1}.
\begin{lemma}\cite{Aliabadi 1}\label{matchable sets, general}
Let $G$ be an abelian group and $A$ be a finite subset of $G$ that does not contain a coset of a proper nontrivial finite subgroup of $G$. Then for any finite subset $B\subset G$ with $|A|=|B|$ and $0\notin B$, there is a matching in the group sense from $A$ to $B$.
\end{lemma}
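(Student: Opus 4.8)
The plan is to reduce the existence of a group matching to a system-of-distinct-representatives problem and then to verify Hall's condition by means of Kneser's theorem (Theorem~\ref{Kneser}). First I would, for each $a\in A$, set $S_a=\{b\in B: a+b\notin A\}\subseteq B$. A matching $f\colon A\to B$ in the group sense is exactly a system of distinct representatives for the family $(S_a)_{a\in A}$, and since $|A|=|B|$ is finite any such system is automatically a bijection (and $f(a)\in S_a$ is precisely the condition $a+f(a)\notin A$). By Hall's marriage theorem \cite{Hall} (the free-matroid special case of Rado's Theorem~\ref{transversal, Rado}) it therefore suffices to check that $\bigl|\bigcup_{a\in A'}S_a\bigr|\ge |A'|$ for every nonempty $A'\subseteq A$.

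Next I would rewrite the neighborhood in additive terms. For nonempty $A'\subseteq A$ put $D=\{b\in B: A'+b\subseteq A\}=B\cap\bigcap_{a\in A'}(-a+A)$; then $\bigcup_{a\in A'}S_a=B\setminus D$, so that Hall's condition for $A'$ \emph{fails} precisely when $|A'|+|D|\ge |A|+1$. Assuming for contradiction that some nonempty $A'$ violates Hall's condition, I would record the two structural facts I intend to play against each other: by the definition of $D$ we have $A'+D\subseteq A$, and because $D\subseteq B$ with $0\notin B$ we have the crucial fact $0\notin D$.

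The crucial step, and the one I expect to be the main obstacle, is to extract a \emph{nontrivial} subgroup $H$ with a coset inside $A$. Applying Kneser's theorem directly to $(A',D)$ only produces a subgroup $H$ with $|H|\ge |A'|+|D|-|A|\ge 1$, which in the boundary case $|A'|+|D|=|A|+1$ (a critical pair whose sumset fills $A$) may be trivial; handling that case head-on would require the full structure theory of sumsets of minimal size. I would sidestep it entirely by exploiting $0\notin D$: set $D'=D\cup\{0\}$, so that $|D'|=|D|+1$ while still $A'+D'=(A'+D)\cup A'\subseteq A$. Applying Kneser's theorem (Theorem~\ref{Kneser}) to the nonempty sets $A'$ and $D'$ now yields a subgroup $H$ with $A'+D'+H=A'+D'$ and
$$|H|\ge |A'|+|D'|-|A'+D'|\ge |A'|+|D|+1-|A|\ge 2,$$
so the extra element $0$ forces $H$ to be nontrivial.

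Finally I would convert this into the configuration forbidden by the hypothesis. Since $A'+D'$ is nonempty and $H$-periodic, it is a union of $H$-cosets, hence contains some coset $g+H\subseteq A'+D'\subseteq A$. As $g+H\subseteq A$ and $A$ is finite, $H$ is finite with $|H|\le |A|$; and $H$ is proper, for otherwise $A=G$ is finite, which forces $B=G\ni 0$ against $0\notin B$. Thus $A$ would contain a coset of a proper nontrivial finite subgroup, contradicting the hypothesis on $A$. This contradiction shows that Hall's condition holds for every nonempty $A'\subseteq A$, and therefore the desired matching from $A$ to $B$ exists.
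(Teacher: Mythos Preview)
The paper does not supply its own proof of this lemma: it is quoted verbatim from \cite{Aliabadi 1} and then immediately used to derive Corollary~\ref{matchable sets, p(G)}. So there is no in-paper argument to compare against line by line. That said, the paper does indicate (in the paragraph following Theorem~\ref{asymmetric, transversal matroid}) that results of this type are obtained by combining Hall's marriage theorem with Kneser's additive theorem, and your proposal executes exactly that template.

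Your argument is correct. The reduction to an SDR for the family $(S_a)_{a\in A}$ is standard, and your computation that a Hall violation at $A'$ forces $|A'|+|D|\ge |A|+1$ with $A'+D\subseteq A$ is clean. The one genuinely delicate point---that Kneser applied to $(A',D)$ might return the trivial subgroup in the boundary case---you handle neatly by adjoining $0$ to $D$ (legitimate precisely because $0\notin B\supseteq D$), which bumps the Kneser deficit to at least $2$ and forces $H$ nontrivial. The finiteness and properness of $H$ then follow from $g+H\subseteq A$ and $|A|=|B|$ with $0\notin B$, exactly as you say. This is the same Hall-plus-Kneser mechanism the paper alludes to, so your write-up would slot in as the omitted proof without friction.
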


The lemma entails:
\begin{corollary}\label{matchable sets, p(G)}
Let $G$ be an abelian group and $A$ and $B$ be finite subsets of $G$ with $|A|=|B|<p(G)$, and $0\notin B$. Then there is a matching in the group sense from $A$ to $B$. 
\end{corollary}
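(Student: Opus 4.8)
The plan is to deduce this corollary directly from Lemma \ref{matchable sets, general} by verifying its sole hypothesis on $A$, namely that $A$ contains no coset of a proper nontrivial finite subgroup of $G$. The entire content of the reduction is a counting argument relating the size of such a coset to the quantity $p(G)$, which by definition is the smallest cardinality of a non-zero subgroup of $G$.

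First I would argue by contradiction. Suppose $A$ \emph{does} contain a coset $g+H$ of some proper nontrivial finite subgroup $H$ of $G$. A coset has the same cardinality as the subgroup, so $|g+H|=|H|$, and since $g+H\subset A$ we get $|A|\geq |H|$. Now $H$ is nontrivial, i.e. $H\neq\{0\}$, so $H$ is a non-zero subgroup of $G$; by the definition of $p(G)$ as the minimum cardinality over all non-zero subgroups, this forces $|H|\geq p(G)$. Chaining these inequalities yields $|A|\geq |H|\geq p(G)$, which contradicts the hypothesis $|A|<p(G)$. Hence no such coset can be contained in $A$.

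With this established, $A$ meets the hypothesis of Lemma \ref{matchable sets, general}, and since we are given $|A|=|B|$ and $0\notin B$, the lemma immediately produces a matching in the group sense from $A$ to $B$, completing the proof.

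There is no real obstacle here: the only step requiring any thought is the observation that the coset–subgroup cardinality identity together with the minimality defining $p(G)$ turns the size restriction $|A|<p(G)$ into the structural ``no coset'' condition of the lemma. I would remark in passing that when $G$ is torsion-free the argument is vacuously fine, since then $G$ has no nontrivial finite subgroup at all (so $p(G)$ may be regarded as infinite and the hypothesis $|A|<p(G)$ is automatic for finite $A$), and a finite set cannot contain a coset of an infinite subgroup in any case.
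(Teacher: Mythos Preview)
Your proof is correct and is exactly the intended derivation: the paper states the corollary immediately after Lemma~\ref{matchable sets, general} with only the words ``The lemma entails,'' and your argument simply spells out why the condition $|A|<p(G)$ guarantees the no-coset hypothesis of that lemma.
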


\begin{proof} [Proof of Theorem\ref{Asy sparse paving}] (1) 
Assume that condition (1) of the theorem holds. That is, $|E(M)|<\min\{|E(N)|-1, p(G)\}$. 
Split  into two cases:
\medskip

{\bf Case 1}.
If $M$ is a free matroid, then it has only one basis $\mathcal{M}=E(M)=\{a_1,...,a_n\}$. Let $\mathcal{N}=\{b_1,...,b_n\}$ be a basis for $N$. Since $|\mathcal{N}|=|\mathcal{M}|<p(G)$  
it follows from Corollary \ref{matchable sets, p(G)} that there is a matching $f:\mathcal{M}\to \mathcal{N}$ in the group sense. Thus $a_i+f(a_i) \notin \mathcal{M} =E(M)$ for every $i\in [n]$, implying that $M$ is matched to $N$ in the matroid sense. 
\medskip

{\bf Case 2}. 
Suppose $M$ is not a free matroid. Let $A$ be a subset of $E(N)$ with $|A|=|E(M)|$. Since $|A|=|E(M)|<p(G)$  by Corollary \ref{matchable sets, p(G)}, there exists a matching $f:E(M)\to A$ in the group sense. Let $\mathcal{M}=\{a_1,\ldots,a_n\}$ be a basis of $M$. Set $b_i:=f(a_i)$, $1\leq i\leq n$, and let  $\mathcal{N}=\{b_1,\ldots,b_n\}\subset E(N)$.
Since $N$ is a sparse paving matroid, it follows from Theorem \ref{sparse paving alternative} that  $\mathcal{N}$ is either a basis or a circuit-hyperplane of $\N$. 
If $\mathcal{N}$ is a basis of $N$, then $\mathcal{M}$ is matched to $\mathcal{N}$ in the matroid sense, and we are done. So assume  $\mathcal{N}$ is a circuit-hyperplane. 

\begin{claim}\label{cond1claim}
There exists $x\in E(N)\setminus\mathcal{N}$ and $i\in [n]$ such that
$
a_i+x\notin E(M).
$
\end{claim}
\begin{proof}
Otherwise, we have
\begin{equation}\label{eq13}
\mathcal{M}+(E(N)\setminus\mathcal{N})\subset E(M).
\end{equation}
Applying Theorem \ref{Kneser} for $\mathcal{M}$ and $E(N)\setminus\mathcal{N}$, there exists a subgroup $H$ of $G$ such that
\begin{align}
\label{eq14}
&|\mathcal{M}+(E(N)\setminus\mathcal{N})|\geq|\mathcal{M}|+|E(N)\setminus\mathcal{N}|-|H|,\\
\label{eq15}
&\mathcal{M}+(E(N)\setminus\mathcal{N}) +H= \mathcal{M}+(E(N)\setminus\mathcal{N}).
\end{align}
Note that (\ref{eq13}) and (\ref{eq15})  imply together that
$
\mathcal{M}+(E(N)\setminus\mathcal{N}) +H\subset E(M),
$
entailing
$
|H|\leq |E(M)|<p(G).
$
 Thus $H=\{0\}$.
Rewriting \eqref{eq14}, we obtain 
\begin{equation}\label{eq16}
\begin{split}
|\mathcal{M}+(E(N)\setminus\mathcal{N})| &\geq |\mathcal{M}|+|E(N)\setminus\mathcal{N}|-1 \\&=  n+|E(N)|-n-1=|E(N)|-1.
\end{split}
\end{equation}
Combining \eqref{eq16} and \eqref{eq13} we conclude that  $
|E(M)|\geq|E(N)|-1,
$
contradicting our assumption.

\end{proof}

Choose $a_i\in \mathcal{M}$ and $x\in E(N)\setminus \N$ as in the claim. Consider the set   $\mathcal{P}=\{b_1,\ldots,b_{i-1},x,b_{i+1},\ldots,b_n\}\subset E(N)$. Then $|\mathcal{P}\cap\mathcal{N}|=n-1$. Since $\mathcal{N}$ is a circuit-hyperplane, it follows from Theorem \ref{(n-1) partitions} that $\mathcal{P}$ is a basis of $N$, and $\mathcal{M}$ is matched to $\mathcal{P}$ through the map $a_i\mapsto x$ and $a_j\mapsto b_j$ for $j\neq i$. This completes the proof under the assumption of condition (1).\\
\medskip
(2)
Assume that condition (2) of the theorem holds. That is, $E(M)$ is not a progression, $|E(M)|=|E(N)|-1$, $|E(N)|<p(G)$ and $G$ is finite. We assume that $M$ is not a free matroid (an analogous argument to Case 1 of the previous part may settle the case $M$ is free.) We also assume that $n>1$ (the case n=1 is immediate as all matroids in this paper are assumed to be loopless.) Let $A$ be a subset of $E(N)$ with $|A|=|E(M)|$. Since $|A|=|E(M)|<p(G)$  by Corollary \ref{matchable sets, p(G)}, there exists a matching $f:E(M)\to A$ in the group sense. Let $\mathcal{M}=\{a_1,\ldots,a_n\}$ be a basis of $M$. Set $b_i:=f(a_i)$, $1\leq i\leq n$, and let  $\mathcal{N}=\{b_1,\ldots,b_n\}\subset E(N)$.
Since $N$ is a sparse paving matroid, it follows from Theorem \ref{sparse paving alternative} that  $\mathcal{N}$ is either a basis or a circuit-hyperplane of $N$. 
If $\mathcal{N}$ is a basis of $N$, then $\mathcal{M}$ is matched to $\mathcal{N}$ in the matroid sense, and we are done. So assume  $\mathcal{N}$ is a circuit-hyperplane. 

\begin{claim}\label{cond1claim}
There exists $x\in E(N)\setminus\mathcal{N}$ and $i\in [n]$ such that
$
a_i+x\notin E(M).
$
\end{claim}
\begin{proof}
Otherwise we have 
\begin{equation}\label{eq18}
\mathcal{M}+(E(N)\setminus\mathcal{N})\subset E(M).
\end{equation}
In a similar manner as part (1) one can verify that 
\begin{equation}
|\mathcal{M}+(E(N)\setminus\mathcal{N})| \geq |E(N)|-1,
\end{equation}
entailing
\begin{equation}
|\mathcal{M}+(E(N)\setminus\mathcal{N})| \geq |E(M)|.
\end{equation}
This together with (\ref{eq18}) implies
\begin{equation}\label{eq19}
\mathcal{M}+(E(N)\setminus\mathcal{N})=E(M).
\end{equation}
It follows from (\ref{eq19}) that $|\mathcal{M}+(E(N)\setminus\mathcal{N})|=|\mathcal{M}|+|E(N)\setminus\mathcal{N}|-1$. Therefore $\{\mathcal{M},E(N)\setminus\mathcal{N}\} $ is a critical pair. Invoking Lemma \ref{Kemperman's critical theorem} we may conclude that $\mathcal{M}$ and $E(N)\setminus\mathcal{N}$ are progressions with the same difference. This along with (\ref{eq19}), and the fact that the sumset of two progressions with the same difference is a progression, we may conclude that $E(M)$ is a progression, which is a contradiction.
\end{proof}
Choose $i\in [n]$ and $x\in E(N)\setminus \N$ as in the claim. Consider the set   $\mathcal{P}=\{b_1,\ldots,b_{i-1},x,b_{i+1},\ldots,b_n\}\subset E(N)$. In a similar manner as in the previous case, one can show that $\mathcal{P}$ is a basis of $N$ and $\mathcal{M}$ is matched to $\mathcal{P}$, in the matroid sense.

\medskip
\begin{remark}
    Note that in Claim \ref{cond1claim} since it is assumed $M$ is not a free matroid then $|E(M)|\geq n+1$, implying $|E(N)|\geq n+2$. Thus $|E(N)\setminus \N|>1$, entailing $E(N)\setminus \N$ satisfies the assumption of Lemma \ref{Kemperman's critical theorem}.
\end{remark}
\medskip

(3)
Assume that condition (3) of the theorem holds. That is, $E(M)$ is neither a progression nor a semi-progression, $G$ is finite and $|E(M)|=|E(N)|< p(G)$. Since $|E(M)|=|E(N)|<p(G)$  by Corollary \ref{matchable sets, p(G)}, there exists a matching $f:E(M)\to E(N)$ in the group sense. Let $\mathcal{M}=\{a_1,\ldots,a_n\}$ be a basis of $M$. Set $b_i:=f(a_i)$, $1\leq i\leq n$, and let  $\mathcal{N}=\{b_1,\ldots,b_n\}\subset E(N)$.
Since $N$ is a sparse paving matroid, it follows from Theorem \ref{sparse paving alternative} that  $\mathcal{N}$ is either a basis or a circuit-hyperplane of $\N$. 
If $\mathcal{N}$ is a basis of $N$, then $\mathcal{M}$ is matched to $\mathcal{N}$ in the matroid sense, and we are done. So assume  $\mathcal{N}$ is a circuit-hyperplane. Note that in this case, we must have $n>1$ as all matroids in this paper are assumed to be loopless.

\begin{claim}\label{cond3claim}
There exists $x\in E(N)\setminus\mathcal{N}$ and $i\in [n]$ such that
$
a_i+x\notin E(M).
$
\end{claim}
\begin{proof}
Assume to the contrary such $x$ and $i$ do not exist. Then we have 
\end{proof}
\begin{equation}\label{eq20}
\mathcal{M}+(E(N)\setminus\mathcal{N})\subset E(M).
\end{equation}
In a similar manner to the previous cases, one may argue that 
\begin{equation}\label{eq21}
|\mathcal{M}+(E(N)\setminus\mathcal{N})|\geq |\mathcal{M}|+|E(N)\setminus\mathcal{N}|-1.
\end{equation}
It follows from (\ref{eq20}) and (\ref{eq21}) that 
\begin{equation}\label{eq22}
|E(M)|\geq |\mathcal{M}+(E(N)\setminus\mathcal{N})|\geq |E(N)|-1=|E(M)|-1.
\end{equation}

It follows from (\ref{eq22}) that $E(M)=\mathcal{M}+(E(N)\setminus\mathcal{N})$ or $E(M)\setminus\{y\}=\mathcal{M}+(E(N)\setminus\mathcal{N})$, for some $y\in E(M)$. We split into two cases:
\medskip

{\bf Case 1}.
$E(M)=\mathcal{M}+(E(N)\setminus\mathcal{N})$. So we have
$$
E(M)=(\mathcal{M}+(E(N)\setminus\mathcal{N}))\cup \mathcal{M}=
\mathcal{M}+((E(N)\setminus \mathcal{N})\cup\{0\}).
$$
This together with $0\notin E(N)$ implies that $\{\mathcal{M},(E(N)\setminus \mathcal{N})\cup\{0\}\} $ is a critical pair. 
Applying Lemma \ref{Kemperman's critical theorem} to $\mathcal{M}$ and $(E(N)\setminus\mathcal{N})\cup\{0\}$ we may conclude that $\mathcal{M}$ and $(E(N)\setminus\mathcal{N})\cup\{0\}$ are both progressions with the same difference. Therefore, $E(M)=\mathcal{M}+
((E(N)\setminus\mathcal{N})\cup\{0\})$ is a progression, which is a contradiction.

\medskip

{\bf Case 2}.
$E(M)\setminus\{y\}=\mathcal{M}+(E(N)\setminus\mathcal{N})$. Then $\{\mathcal{M}, E(N)\setminus\mathcal{N}\}$ is a critical pair. Applying Lemma \ref{Kemperman's critical theorem} to  $\mathcal{M}$ and  $E(N)\setminus\mathcal{N}$ we may conclude that  $\mathcal{M}$ and  $E(N)\setminus\mathcal{N}$ are progressions with the same difference. Therefore, $E(M)\setminus\{y\}=\mathcal{M}+(E(N)\setminus\mathcal{N})$ is a progression. This means $E(M)$ is a semi-progression, which is a contradiction. 

\medskip
Now assume that $x$ and $i$ are obtained as in Claim \ref{cond3claim}. Set 
$$
\mathcal{P}=\{b_1,\ldots,b_{i-1},x,b_{i+1},\ldots,b_n\}\subset E(N).
$$ In a similar manner as before, one can show that $\mathcal{P}$ is a basis of $N$ and $\mathcal{M}$ is matched to $\mathcal{P}$, in the matroid sense. This completes the proof.\\

(4)
Assume that condition (4) of the theorem holds. That is, $|E(M)|<|E(N)|-n-1$.
Let $\mathcal{M}=\{a_1,...,a_n\}$ be a basis for $M$.
\begin{claim}
There exist $n$ distinct elements $b_1,b_2,\dots b_n \in E(N)$, such  that $a_i+b_i \notin E(M)$ for all $i\in [n]$.
\end{claim}
\begin{proof}
We first choose $b_1\in E(N)$ so that $a_1+b_1\notin E(M)$. Such $b_1$ exists as $|E(M)|<|E(N)|$. Next, choose $b_2\in E(N)\setminus\{b_1\}$, such that $a_2+b_2\notin E(M)$. Such $b_2 \in E(N)$ exists as $|E(M)|<|E(N)|-1$. We proceed by induction on $n$. Suppose $b_1,\dots,b_{j}$ have been chosen. We show that there exists $b_{j+1}\in E(N)\setminus \{b_1,\dots,b_j\}$ satisfying $a_{j+1}+b_{j+1}\notin E(M)$. Assume to the contrary that there 
is no such $b_{j+1}$. Then we have 
$$
a_{j+1}+(E(N)\setminus \{b_1,\dots,b_j\}\subset E(M),
$$
entailing
$$
|E(N)\setminus \{b_1,\dots,b_j\}|\leq |E(M)|.
$$
This implies 
$$
|E(M)|\geq |E(N)|-j\geq |E(N)|-n,
$$
which is a contradiction.
\end{proof}
Let $\mathcal{N}=\{b_1,\ldots,b_n\}$, where $b_1,\dots,b_n$ are as in the claim. Since $N$ is a sparse paving matroid, it follows from Theorem \ref{sparse paving alternative} that $\mathcal{N}$ is either a basis of $N$ or it is a circuit-hyperplane. If $\mathcal{N}$ is a basis, then $\mathcal{M}$ is matched to $\mathcal{N}$  and we are done.
Suppose $\mathcal{N}$ is a circuit-hyperplane. 
\begin{claim}
There exist $x\in E(N)\setminus \N$ and $i\in [n]$ such that $a_i+x\notin E(M)$.
\end{claim}
\begin{proof}
Assume to the contrary, such $x$ and $a_i$ do not exist. Then
$$
\mathcal{M}+E(N)\setminus\{b_1,\dots,b_n\}\subset E(M),
$$
entailing
$$
|E(N)|-n=|E(N)\setminus\{b_1,\dots,b_n\}|\leq |E(M)|.
$$
This contradicts $|E(M)|<|E(N)|-n-1$.
\end{proof}

Choose $i\in [n]$ and $x\in E(N)\setminus \N$ as in the claim. Then the set  $\mathcal{P}=\{b_1,\dots,b_{i-1},x,b_{i+1},\dots, b_n\}$ is a basis for $N$ and $\M$ is matched to $\mathcal{P}$, as in the previous proof. This completes the proof of the theorem under condition (4).
\medskip

\end{proof}

\begin{proof}[Proof of Proposition \ref{asymmetric, uniform}.]
 Let $A$ be a subset of $E(N)$ with $|E(M)|=|A|$. Since $|E(M)|=|A|<p(G)$ and $0\notin A$, it follows from Corollary \ref{matchable sets, p(G)} that there exists a matching $f:E(M)\to A$ in the group sense. Let $\mathcal{M}=\{a_1,\ldots,a_n\}$ be a basis of $M$, set $b_i:=f(a_i)$ and let $\N=\{b_1,\dots,b_n\}$. Since $N$ is a uniform matroid, $\N$ is a basis for $N$. Thus  $\M$ is matched to $\N$ in the matroid sense.
\end{proof}

\begin{proof} [Proof of Theorem \ref{asymmetric, sparse paving}.]
Let $M$ be a matroid over $G$  and let $N$ be  a  paving matroid over $G$, both of the same rank $n$, so that $|E(M)|=|E(N)|=n+1<p(G)$. Let  $\preceq$ be a total order on $E(M)\cup E(N) \cup (E(M)+E(N))\cup \{0\}$ that is compatible with the group structure of $G$, so that $E(M),E(N)$ are both positive, and  $\max(E(M))\notin E(M)+E(N)$. 
Let $\mathcal{M}=\{a_1,\ldots,a_n\}$ be a basis for $M$ and set $x=\max(E(M))$. Split into two cases.
\medskip

{\bf Case 1}:
$x\notin \mathcal{M}$. Note that in this case $E(M)=\M \cup \{x\}$. Let $\mathcal{N}$ be a basis for $N$. Since $|\mathcal{M}|=|\mathcal{N}|<p(G)$ and $0\notin \mathcal{N}$,  by Corollary \ref{matchable sets, p(G)} $\mathcal{M}$ is matched to $\mathcal{N}$ in the group sense. That is, there exists a bijection $f:\mathcal{M}\to \mathcal{N}$ satisfying  $a_i+f(a_i)\not\in \mathcal{M}$ for all $i\in [n]$. Moreover, since $x\notin E(M)+E(N)$, we have also $a_i+f(a_i)\neq x$ for all $i\in [n]$.  Therefore $\mathcal{M}$ is matched to $\mathcal{N}$ in the matroid sense, as needed.

\medskip

{\bf Case 2}:
 $x\in\mathcal{M}$. Without loss of generality $x=a_n$. Since  $0\notin E(N)$, and $|E(M)|=|E(N)|<p(G)$,  by Corollary  \ref{matchable sets, p(G)} there exists a matching  $f:E(M)\to E(N)$ in the group sense, that is,  $a+f(a)\not\in E(M)$ for any $a\in E(M)$.  Set $b_i:=f(a_i)$ for $i\in [n]$ and let $\mathcal{N}=\{b_1,\ldots,b_n\}$. 
 
 If $\mathcal{N}$ is a basis of $N$, then $\mathcal{M}$ is matched to $\mathcal{N}$ and we are done.
Assume $\mathcal{N}$ is not a basis of $N$. Since $N$ is a paving matroid, every subset of $E(N)$ of size $n-1$ is independent.
Therefore, the set
$\mathcal{N}_1=\{b_1,\ldots,b_{n-1}\}$ is independent. By the  augmentation property, one can extend $\mathcal{N}_1$ to a basis $\mathcal{P}=\{b_1,\ldots,b_{n-1}, b\}$ of $N$ for some $b\in E(N)$.
Since $b$ is positive and $x=a_n$ is the maximum element of $E(M)$, we have $a_n+b\notin E(M)$.  Therefore $\mathcal{M}$ is matched to $\mathcal{P}$ in the matroid sense. This completes the proof of the theorem. 
\end{proof}

\begin{proof}[Proof Theorem \ref{$|E(M)|=|E(N)|=n+1$}]
Assume to the contrary that $M$ is not matched $N$. Then there exists a basis $\mathcal{M}=\{a_1,\ldots,a_n\}$ of $M$ such that $\mathcal{M}$ is not matched to any basis of $N$. Therefore, by Proposition \ref{rank criteria}, there exists a set  $J\subset [n]$ such that  
\begin{equation}\label{rank negation}
    r_N\left(\bigcap_{i\in J}\left((-a_i+E(M))\cap E(N)\right)\right)> n-|J|.
\end{equation}
Let $J\subset [n]$ be a minimum set satisfying (\ref{rank negation}). 
Let $S=\{a_i\mid i\in J\}$ and 
 $U=\bigcap_{i\in J}\left((-a_i+E(M))\cap E(N)\right)$. Note that $U$ is not empty by its definition.

We claim that $r_N(U)=n-|J|+1$. Indeed, 
   if $|J|=1$, that is $J=\{i\}$ for some $i\in [n]$, then  (\ref{rank negation}) implies
   $$r_N(U)=r_N\left((-a_i+E(M))\cap E(N)\right) =n=n-|J|+1.$$
    Otherwise,  $|J|\geq 2$ and by  the minimality of $J$, for every $j\in J$  we have
\begin{equation*}
\begin{split}
r_N(U) &\leq r_N\left(\bigcap_{i\in J\setminus \{j\}}\left((-a_i+E(M))\cap E(N)\right)\right)\\
&\leq  n-|J\setminus \{j\}|=n-|J|+1,
\end{split}
\end{equation*}
which together with  (\ref{rank negation}) implies $r_N(U)=n-|J|+1$.

Applying Theorem \ref{Kneser} to $U$ and $S$, there exists a subgroup $H$ of $G$ for which
\begin{align}\label{Kneser 1}
  &|U+S|\geq|U|+|S|-|H|,\text{ and}\\\label{Kneser 2}
&U+S+H=U+S.  
\end{align}

By the definition of $U$ we have $U+S\subset E(M)$. 
Therefore, by (\ref{Kneser 2}) we obtain
\begin{equation*}\label{Subgroup argument}
U+S+H\subset E(M),
\end{equation*}
entailing $|H|\leq |E(M)|$.
Note that $|E(M)|<p(G)$. Indeed, this is clear if $G$ is torsion-free. If $G$ is cyclic of prime order $p$ then since $0\notin E(N)$ we have $|E(M)|=|E(N)| <p = p(G)$.
Therefore $|H|<p(G)$ and we conclude that $H=\{0\}$. 
Thus by  (\ref{Kneser 1}) we have
\begin{equation*}
n+1=|E(M)|\geq|U+S|\geq|U|+|S|-1 \geq r_N(U)+|J|-1 = n.
\end{equation*} 
Therefore, either $|U+S|=n$ or $|U+S|=n+1$. 
\medskip

{\bf Case 1}. 
$|U+S|=n$. Then $E(M)\setminus \{y\}=U+S$ for some $y\in E(M)$. We first assume that $1<|J|<n$. Then the critical pair $\{U,S\}$ satisfies the conditions of Lemma \ref{Kemperman's critical theorem}. Therefore $U$ and $S$ are progressions with the same difference. Thus $E(M)\setminus \{y\}=U+S$ is a progression entailing $E(M)$ being a semi-progression, which is a contradiction.\\
Next, we assume that $|J|=1$. Without loss of generality, we assume that $J=\{1\}$. Then $U=(-a_1+E(M))\cap E(N)$. Now $n=|U|=|(-a_1+E(M))\cap E(N)|$, which is a contradiction.\\
Finally, we assume that $|J|=n$. Then $|U|=1$. Choose 
$$
x\in U=\bigcap_{i\in [n]}\left((-a_i+E(M))\cap E(N)\right).
$$
Thus, $x\in \bigcap_{i\in [n]}\left(-a_i+E(M))\right)$. It follows from Lemma \ref{progression} that $x=0$, implying $0\in E(N)$, which is a contradiction.

\medskip

{\bf Case 2}. 
$|U+S|=n+1$. Then $E(M)=U+S$ entailing 
$$E(M)=(U+S)\cup S=(U\cup\{0\})+S.$$ 
We first assume that $|J|=1$. Without loss of generality, we assume that $J=\{1\}$. Then $U=(-a_1+E(M))\cap E(N)$. Now $n=|U|=|(-a_1+E(M))\cap E(N)|$, which is a contradiction.
\\
Next we assume that $1<|J|\leq n$. Since $\{U\cup\{0\}, S\}$ is a critical pair, by applying Lemma \ref{Kemperman's critical theorem} to $U\cup\{0\}$ and $S$ we may conclude that they are both progressions with the same difference. Therefore $E(M)=(U\cup\{0\})+S$ is a progression, which is a contradiction.\\

In both cases we extract contradictions. Thus $M$ is matched to $N$.

\end{proof}
\begin{proof}[Proof of Theorem \ref{coloopless sparse paving}.]

Since $|E(M)|=|E(N)|<p(G)$ and $0\notin E(N)$ it follows from Corollary \ref{matchable sets, p(G)} that there exists a matching $f:E(M)\rightarrow E(N)$ in the group sense. Let $E(M)=\{a_1,\dots, a_{n+1}\}$. Let $\mathcal{M}$ be a basis for $M$. Without loss of generality, we may assume that $\mathcal{M}=\{a_1,\dots,a_n\}$. Set $b_i=f(a_i)$, for all $i\in [n]$. Consider that $n$-subset $\mathcal{N}=\{b_1,\dots,b_n\}$ of $E(N)$.
\begin{claim}
$\mathcal{N}$ is a basis for $N$.
\end{claim}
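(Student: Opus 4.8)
The plan is to exploit the extremal size relation $|E(N)| = n+1 = r(N)+1$ together with the hypothesis that $N$ is coloopless. First I would record that the $n$ elements $b_1,\dots,b_n$ are distinct: since $f\colon E(M)\to E(N)$ is a matching in the group sense it is in particular a bijection, hence injective, so $\mathcal{N}=\{b_1,\dots,b_n\}$ is a genuine $n$-subset of $E(N)$. Because $N$ is a sparse paving matroid, Theorem \ref{sparse paving alternative} tells us that every $n$-subset of $E(N)$ is \emph{either} a basis \emph{or} a circuit-hyperplane, so it suffices to exclude the latter possibility for $\mathcal{N}$.

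Next I would argue by contradiction: suppose $\mathcal{N}$ is a circuit-hyperplane, in particular a dependent set. Since $|E(N)|=n+1$ and $|\mathcal{N}|=n$, there is a unique element $c\in E(N)\setminus\mathcal{N}$, and $\mathcal{N}=E(N)\setminus\{c\}$ is the only $n$-subset of $E(N)$ avoiding $c$. Now consider any basis $B$ of $N$; it has size $r(N)=n$. If $c\notin B$, then $B\subseteq E(N)\setminus\{c\}=\mathcal{N}$, and equality of cardinalities forces $B=\mathcal{N}$, contradicting the fact that $\mathcal{N}$, being a circuit-hyperplane, is dependent and hence not a basis. Thus every basis of $N$ contains $c$, which is exactly the statement that $c$ is a coloop of $N$, contradicting the hypothesis that $N$ is coloopless.

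Therefore $\mathcal{N}$ cannot be a circuit-hyperplane, and by the dichotomy above it is a basis of $N$, which proves the claim. The crux — and the only place the hypotheses genuinely interact — is the observation of the second paragraph: in the regime $|E(N)|=r(N)+1$, a circuit-hyperplane has a single-element complement $\{c\}$, and $c$ is then forced to lie in every basis, so colooplessness precisely forbids the existence of any circuit-hyperplane. (Equivalently, one could first note that a coloopless sparse paving matroid with $|E(N)|=n+1$ is the uniform matroid $U_{n,n+1}$, in which \emph{every} $n$-subset is a basis, and then the claim is immediate.) I do not expect any further obstacle: once $\mathcal{N}$ is known to be a basis, the group-sense matching relation $a_i+b_i\notin E(M)$ furnished by $f$ shows at once that $\mathcal{M}$ is matched to $\mathcal{N}$ in the matroid sense through the identity permutation.
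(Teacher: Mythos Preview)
Your proof is correct and follows the same overall contradiction strategy as the paper: assume $\mathcal{N}$ is not a basis, use Theorem~\ref{sparse paving alternative} to conclude it is a circuit-hyperplane, and then show that the unique element of $E(N)\setminus\mathcal{N}$ is a coloop. The difference lies in how that last step is carried out. The paper invokes Theorem~\ref{counting hyperplanes} to bound the number of circuit-hyperplanes of $N$ by $\binom{n+1}{n}\cdot\frac{1}{n+1}=1$, deduces that every other $n$-subset of $E(N)$ is a basis, and hence that the leftover element $x$ lies in all of them. Your argument bypasses this counting result entirely: from $|E(N)|=n+1$ you observe directly that any basis avoiding $c$ would have to coincide with the dependent set $\mathcal{N}$, which is impossible. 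This is strictly more elementary (it uses only cardinality and the definition of a basis), and it also makes transparent your parenthetical remark that a coloopless sparse paving matroid on $n+1$ elements of rank $n$ is simply $U_{n,n+1}$. The paper's route, on the other hand, illustrates how the general bound of Theorem~\ref{counting hyperplanes} specializes in this extremal regime.
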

\begin{proof}

Suppose for the sake of
contradiction that $\mathcal{N}$ is not a basis. It follows from Theorem \ref{sparse paving alternative} that $\mathcal{N}$ is a circuit-hyperplane. Invoking Theorem \ref{counting hyperplanes}, one may conclude that $\mathcal{N}$ is the only circuit-hyperplane of $N$. Choose $x\in E(N)\setminus \mathcal{N}$ and set $\mathcal{N}_i=(\mathcal{N}\setminus \{b_i\})\cup \{x\}$, where $i\in [n]$. Thus, every $\mathcal{N}_i$ is a basis for $N$. Hence $x$ is a coloop for $N$, a contradiction.
\end{proof}

Clearly, $\mathcal{M}$ is matched to the basis $\mathcal{N}$ for $N$ in the matroid sense, as needed.

\end{proof}

 \subsection{Transversal matroids} In this section we prove our main results concerning transversal matroids. 

\begin{proof}[Proof of Theorem \ref{partition, asy 1}.]
Let $x=\max E$ and $x'=\max E'$. Note that the  conditions of the theorem imply further that
 $x\in E_n$, $x' \in E'_n$,
 $|E|=|E'|$, and
 $|E'_i|>|E'_j|$ when $1\leq i<j\leq n$.

\begin{claim}\label{claimgen}
There does not exist $a\in E$ satisfying $a+x'\in E$.
\end{claim}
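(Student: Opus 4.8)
The plan is to argue by contradiction: suppose there is some $a\in E$ with $a+x'\in E$, and derive a contradiction purely from the order-theoretic hypotheses, without using any matroid structure. The whole argument will take place inside the set $A=E\cup E'\cup (E+E')\cup\{0\}$ on which $\preceq$ is defined. The one point worth flagging at the outset is that $a+x'\in E+E'\subseteq A$, so $a+x'$ is a legitimate element of the ordered set and may be compared with the others even under the (to-be-refuted) assumption $a+x'\in E$.

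First I would exploit the positivity of $E$ from condition (1) together with compatibility of $\preceq$. Since $0\prec a$, applying the compatibility axiom to the triple $0,\,a,\,x'$ gives $x'=0+x'\preceq a+x'$; because translation by $x'$ is injective and $a\neq 0$, this sharpens to $x'\prec a+x'$. Next, the assumption $a+x'\in E$ together with $x=\max E$ gives $a+x'\preceq x$, and condition (3) gives $x=\max E\preceq \max E'=x'$. Chaining these yields
\[
x'\prec a+x'\preceq x\preceq x',
\]
which forces $x'\prec x'$, a contradiction. Hence no such $a$ exists.

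The only subtlety to watch is bookkeeping: I must check that all five elements $0,a,x',x,a+x'$ genuinely lie in the domain $A$ of the order (so that both the comparisons and the compatibility axiom apply), and that the strict inequality $x'\prec a+x'$ is extracted from the non-strict compatibility axiom by invoking injectivity of $y\mapsto y+x'$. Beyond this, the argument is a direct three-line chain of inequalities using only conditions (1) and (3), so I do not anticipate a genuine obstacle; condition (2) and the identifications $x\in E_n$, $x'\in E_n'$ are not needed for this particular claim.
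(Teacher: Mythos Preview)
Your proof is correct and follows essentially the same approach as the paper's. The paper's argument is just the terse version of yours: from $0\prec a$ and $x\preceq x'$ it immediately infers $x\prec a+x'\in E$, contradicting $x=\max E$; your chain $x'\prec a+x'\preceq x\preceq x'$ unpacks the same two applications of compatibility and is more explicit about the domain bookkeeping and the passage from non-strict to strict inequality.
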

\begin{proof}
Assume to the contrary that such $a$ exists. Since $0 \prec a$ and $x\preceq x'$, it follows from the fact that the order is compatible with the group structure that  $x \prec a+x'\in E$, contradicting $x=\max E$.
\end{proof}

Let $\mathcal{M}=\{a_1,\ldots,a_n\}$ be a basis for $M$ so that $a_1\prec \cdots\prec  a_n$. Then $a_i\in E_i$ for $i\in [n]$. 
\begin{claim}
There exist elements $b_1 \in E'_{i_1},\dots, b_n \in E'_{i_n}$, such  that $a_i+b_i \notin E$, and $i_j \neq i_k$ whenever   $j\neq k$. 
\end{claim}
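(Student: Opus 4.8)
The plan is to reformulate the claim as the existence of a system of distinct representatives and to verify Hall's condition by an additive counting argument. I would introduce the bipartite \emph{goodness} graph $\Gamma$ whose two sides are both indexed by $[n]$: the left vertices correspond to the basis elements $a_1,\dots,a_n$ and the right vertices to the parts $E'_1,\dots,E'_n$ of $N$. I would join $i$ to $j$ precisely when $E'_j$ contains an element good for $a_i$, i.e.\ when $a_i+E'_j\not\subseteq E$ (equivalently, when some $b\in E'_j$ has $a_i+b\notin E$). A perfect matching of $\Gamma$ is then exactly a bijection $i\mapsto i_i$ together with a choice of $b_i\in E'_{i_i}$ with $a_i+b_i\notin E$; since distinct parts are disjoint, the resulting $b_1,\dots,b_n$ automatically lie in distinct parts and are distinct, which is precisely the assertion of the claim. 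So it suffices to produce a perfect matching in $\Gamma$, and I would obtain one from Hall's marriage theorem.

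The heart of the argument is the verification of Hall's condition, and this is where I expect the only real work to be. Suppose for contradiction that it fails, so there is a set $I\subseteq[n]$ of left vertices with $|N_\Gamma(I)|<|I|$. Put $i_0=\max I$ and $K=[n]\setminus N_\Gamma(I)$, so that $|K|=n-|N_\Gamma(I)|\ge n-|I|+1\ge n-i_0+1$, using $|I|\le i_0$. By the definition of $K$, every part in $K$ is bad for every index in $I$; in particular $a_{i_0}+E'_j\subseteq E$ for all $j\in K$. The key point is that the translates $\{a_{i_0}+E'_j\}_{j\in K}$ are pairwise disjoint (being $a_{i_0}$-translates of the pairwise disjoint parts $E'_j$) and all contained in $E$, and since $E'$ is positive each of their elements exceeds $a_{i_0}$. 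Hence
\[
\sum_{j\in K}|E_j|=\sum_{j\in K}|E'_j|\le \bigl|\{e\in E: a_{i_0}\prec e\}\bigr|,
\]
where I used $|E'_j|=|E_j|$.

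To finish I would bound both sides. On the right, because the $E_k$ are ordered $E_1\prec\cdots\prec E_n$ and $a_{i_0}\in E_{i_0}$, every element of $E$ exceeding $a_{i_0}$ lies in $E_{i_0}\cup\cdots\cup E_n$ and at least $a_{i_0}$ itself is omitted, giving $|\{e\in E:a_{i_0}\prec e\}|\le \sum_{k\ge i_0}|E_k|-1$. On the left, since the part sizes $|E_1|>\cdots>|E_n|$ are strictly decreasing, any $K$ with $|K|\ge n-i_0+1$ satisfies $\sum_{j\in K}|E_j|\ge\sum_{k=n-|K|+1}^{n}|E_k|\ge\sum_{k\ge i_0}|E_k|$, the minimum being attained on the block of largest indices (smallest parts). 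Combining the two bounds yields $\sum_{k\ge i_0}|E_k|\le\sum_{k\ge i_0}|E_k|-1$, a contradiction. This establishes Hall's condition, hence the desired matching.

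The main obstacle is precisely the two-sided counting just sketched: obtaining the ``$-1$'' on the right (which crucially uses $a_{i_0}\in E_{i_0}\subseteq E$, so that $a_{i_0}$ is excluded from the elements above it) and matching it against the lower bound on $\sum_{j\in K}|E_j|$ coming from the strict monotonicity of the part sizes. I would note as a consistency check that Claim~\ref{claimgen} already shows the top part $E'_n$ is good for every $a_i$, so that $n\in N_\Gamma(I)$ always and $n\notin K$; this only strengthens the left-hand lower bound and is not needed for the contradiction. The hypotheses doing the real work here are the strict size decrease and $a_i\in E_i$ from condition (2) together with the positivity of $E'$ from condition (1).
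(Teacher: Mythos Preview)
Your proof is correct and takes a genuinely different route from the paper. The paper argues greedily: having chosen $b_1,\dots,b_j$ in parts $E'_{i_1},\dots,E'_{i_j}$, it assumes no suitable $b_{j+1}$ exists in the remaining parts, deduces $a_{j+1}+\bigl(E'\setminus\bigcup_{t\le j}E'_{i_t}\bigr)\subset E\setminus\bigcup_{t\le j}E_t$, compares cardinalities to force $\{i_1,\dots,i_j\}=\{1,\dots,j\}$ (here the \emph{strict} decrease of the $|E_i|$ is essential), and then observes that $x'=\max E'\in E'_n$ survives among the unused parts, so $a_{j+1}+x'\in E$, contradicting Claim~\ref{claimgen}. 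Your Hall-theoretic argument replaces this stepwise reasoning with a single global counting estimate and never invokes Claim~\ref{claimgen}; indeed it uses neither the ordering $E'_1\prec\cdots\prec E'_n$ nor hypothesis~(3) that $\max E\preceq\max E'$, and your lower bound $\sum_{j\in K}|E_j|\ge\sum_{k\ge i_0}|E_k|$ already goes through with the weak monotonicity $|E_1|\ge\cdots\ge|E_n|$. So your approach in fact establishes the claim under fewer assumptions than the paper uses, at the modest cost of being nonconstructive where the paper's greedy procedure is explicit.
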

\begin{proof}
We first choose $b_1\in E'$ so that $a_1+b_1\notin E$. Such $b_1$ exists since otherwise, in particular,  $a_1+x'\in E$, contradicting Claim \ref{claimgen}. Let $i_1\in [n]$ be the index satisfying $b_1 \in E'_{i_1}$.
 
We proceed by induction on $n$. 
Suppose $b_1,\dots,b_{j}$ have been chosen. We show that there exists $b_{j+1}\in E'\setminus(E'_{i_1}\cup \dots \cup E'_{i_{j}})$ satisfying $a_{j+1}+b_{j+1}\notin E$.

Assume to the contrary that there 
is no such $b_{j+1}$. Then we have 
\begin{equation}\label{eq5}
a_{j+1}+(E'\setminus (E'_{i_1}\cup \dots \cup E'_{i_{j}}))\subset E.
\end{equation}
By the conditions of the theorem $a_{j+1}$ is larger than all elements of  $E_1\cup \dots \cup E_j$ and $E'$ is positive, and thus (\ref{eq5}) implies
$$a_{j+1}+(E'\setminus(E'_{i_1}\cup \dots \cup E'_{i_{j}}))\subset E \setminus (E_1 \cup \dots \cup E_j),$$
entailing
\begin{equation*}\label{eq8}
|E'|-|E'_{i_1}\cup \dots \cup E'_{i_{j}}|\leq|E|-\left|E_1 \cup \dots \cup E_j\right|.
\end{equation*}
Since $|E_i|=|E'_i|$ for all $i$, it follows that
\begin{equation*}\label{eq9}
|E'_{i_1}\cup \dots \cup E'_{i_{j}}|\ge \left|E_1 \cup \dots \cup E_j\right| = \left|E'_1 \cup \dots \cup E'_j\right|.
\end{equation*}
On the other hand, by the conditions of the theorem $|E'_i|<|E'_j|$ for $i>j$, and therefore we have
$
|E'_{i_1}\cup \dots \cup E'_{i_{j}}|= \left|E'_1 \cup \dots \cup E'_j\right|,
$
entailing
$$
E'_{i_1}\cup \dots \cup E'_{i_{j}}= E'_1 \cup \dots \cup E'_j.
$$
Rewriting (\ref{eq5}), we obtain
\begin{equation}\label{eq12}
a_{j+1}+\left(E'\setminus\left(E'_{1}\cup \dots \cup E'_{{j}}\right)\right)\subset E.
\end{equation}
\noindent
Since $x'\in E'_n$, we have  $x'\in E'\setminus\left(E'_{1}\cup \dots \cup E'_{{j}}\right)$. This together with (\ref{eq12}) implies that $a_{j+1}+x'\in E$, contradicting Claim \ref{claimgen}.
\end{proof}

Let $\mathcal{N}=\{b_1,\ldots,b_n\}$, where $b_1,\dots,b_n$ are as in the claim. Then   $\N$ is a basis of $N$, and $\mathcal{M}$ is matched to $\mathcal{N}$, completing the proof of the theorem.
\end{proof}

We are now ready to prove  Theorem \ref{asymmetric, transversal matroid}.

\begin{proof}[Proof of Theorem \ref{asymmetric, transversal matroid}.]
First, observe that Conditions 1(a),1(b) of the theorem, together with the assumption $0\notin E'$, yield $0\notin E$.
Let $\mathcal{M}=\{a_1,\ldots,a_n\}$ be a basis for $M$ so that  $a_i\in E_i$ for $i\in [n]$. Set
$E^+=\{x\in E: 0\prec x\}$
and 
$E^-=\{x\in E: x\prec 0\}.$

By Theorem \ref{partition, asy 1}, there exist $b_{k+1},\ldots,b_n$ 
in ${\bigcup_{i=k+1}^{n}}E'_i$ such that
no two  $b_j$'s belong to the same set in the partition of $E'$, and
$a_i+b_i\notin E^+$ for all $k+1\leq i\leq n$.
Since  $E_k \prec a_i $ and $a_i,b_i$ are positive,  we also have $a_i+b_i\notin E^- \cup E_k$, and therefore, $a_i+b_i\notin E$. 

Similarly, by the negative counterpart of Theorem  \ref{partition, asy 1}, there exist 
 $b_1,\ldots,b_{k-1}$ in ${\bigcup_{i=1}^{k-1}}E'_i$ such that 
no two of $b_j$'s belong to the same set in the partition of $E'$, and such that 
$a_i+b_i\notin E$.

Set $b_k:=-a_k$. Then  $b_k\in E_k$, and   $a_k+b_k=0\notin E$. Thus $\M$ is matched to the basis  $\mathcal{N}=\{b_1,...,b_n\}$ of  $N$, concluding the proof of the theorem. 
\end{proof}

\section{A concluding remark}\label{concluding}
We hope that the techniques presented here have more general
applicability, especially in the direction of generalizing these statements to abstract simplicial complexes. In this case, various substitutes for Rado's theorem on the existence of transversals in simplicial complexes have the potential to be used. Such alternatives may be found in \cite{Aharoni 1, Aharoni 2, meshulam}. 

\section*{Acknowledgements} 
We are grateful to  Shmuel Friedland for reading a preliminary version of this paper and for his useful suggestions. We also thank  Richard Brualdi for drawing our attention to the various variants of the Hall's marriage theorem. Finally, we are thankful to Steven J. Miller and David J Grynkiewicz for useful discussions on Kneser's additive theorem.\\

\textbf{Data sharing:} Data sharing not applicable to this article as no datasets were generated or analysed.\\
\textbf{Conflict of interest:} To our best knowledge, no conflict of interests, whether of financial or personal nature, has influenced the work presented in this article.

\end{document}